\theoremstyle{plain}
\newtheorem{theorem}{Theorem}[section]
\newtheorem{proposition}[theorem]{Proposition}
\newtheorem{corollary}[theorem]{Corollary}
\newtheorem{assumption}[theorem]{Assumption}
\newtheorem{lemma}{Lemma}[section]
\theoremstyle{definition}
\newtheorem{remark}[theorem]{Remark}
\newtheorem{example}[theorem]{Example}
\numberwithin{equation}{section}
\def\r{\right}
\def\l{\left}
\newcommand{\eps}{\varepsilon}
\newcommand{\dotp}[2]{\left\langle#1,#2\right\rangle}
\newcommand{\m}{\mathcal}
\newcommand{\mb}{\mathbb}
\newcommand\argmin{\mathop{\mbox{argmin}}}
\newcommand{\med}{\mbox{{\rm med}}}
\newcommand{\st}{\mbox{\rm st}}
\newcommand{\sto}{\mathrm{st}}
\begin{document}

\begin{frontmatter}
\title{Robust and scalable Bayes via a median of subset posterior measures}
\runtitle{Robust and Scalable Bayes}

\begin{aug}

\author{\fnms{Stanislav} \snm{Minsker}\thanksref{t2,t3}\ead[label=e1]{minsker@usc.edu}},
\author{\fnms{Sanvesh} \snm{Srivastava}\thanksref{t2}\ead[label=e2]{sanvesh-srivastava@uiowa.edu}},
\author{\fnms{Lizhen} \snm{Lin}\thanksref{t2}\ead[label=e3]{lizhen.lin@austin.utexas.edu}}
\and
\author{\fnms{David} \snm{Dunson}\thanksref{t2}
\ead[label=e4]{dunson@duke.edu}}

\thankstext{t2}{Authors were partially supported by grant R01-ES-017436 from the National Institute of Environmental Health Sciences (NIEHS) of the National Institutes of Health (NIH).}
\thankstext{t3}{Stanislav Minsker acknowledges support from NSF grants FODAVA CCF-0808847, DMS-0847388, ATD-1222567.}
\runauthor{S. Minsker et al.}


\address{Stanislav Minsker\\
Department of Mathematics\\ 
University of Southern California\\
Los Angeles, CA 90089\\
\printead{e1}}

\address{Sanvesh Srivastava\\
Department of Statistics and Actuarial Science\\
The University of Iowa\\
Iowa City, IA 52242\\
 \printead{e2}}

\address{Lizhen Lin\\
Department of Statistics and Data Sciences\\
The University of Texas at Austin\\
Austin, TX 78712\\
\printead{e3}}

\address{David B. Dunson\\
Department of Statistical Science\\
Duke University, Box 90251\\
Durham NC 27708\\
\printead{e4}}

\end{aug}

\begin{abstract}
We propose a novel approach to Bayesian analysis that is provably robust to outliers in the data and often has computational advantages over standard methods. Our technique is based on splitting the data into non-overlapping subgroups, evaluating the posterior distribution given each independent subgroup, and then combining the resulting measures. The main novelty of our approach is the proposed aggregation step, which is based on the evaluation of a median in the space of probability measures equipped with a suitable collection of distances that can be quickly and efficiently evaluated in practice.  
We present both theoretical and numerical evidence illustrating the improvements achieved by our method.
\end{abstract}

\begin{keyword}[class=MSC]
\kwd[Primary ]{62F15}
\kwd[; secondary ]{68W15}
\kwd{62G35}
\end{keyword}

\begin{keyword}
\kwd{Big data}
\kwd{Distributed computing}
\kwd{Geometric median}
\kwd{Parallel MCMC}
\kwd{Stochastic approximation}
\kwd{Wasserstein distance.}
\end{keyword}

\end{frontmatter}

\section{Introduction}

Contemporary data analysis problems pose several general challenges.
One is resource limitations: massive data require computer clusters for storage and processing.
Another problem occurs when data are severely contaminated by ``outliers''  that are not easily identified and removed. Following \cite{Box1968A-Bayesian-appr00}, an outlier can be defined as \emph{``being an observation which is suspected to be partially or wholly irrelevant because it is not generated by the stochastic model assumed.''}  While the topic of robust estimation has occupied an important place in the statistical literature for several decades and
significant progress has been made in the theory of point estimation, robust Bayesian methods are not sufficiently well-understood.

Our main goal is to make a step towards solving these problems, proposing a general Bayesian approach that is 
\begin{enumerate}[(i)] 
\item provably robust to the presence of outliers in the data without any specific assumptions on their distribution or reliance on preprocessing; 
\item scalable to big data sets through allowing computational algorithms to be implemented in parallel for different data subsets prior to an efficient aggregation step.  
\end{enumerate}
The proposed approach consists in splitting the sample into disjoint parts, implementing Markov chain Monte Carlo (MCMC) or another posterior sampling method to obtain draws from each ``subset posterior'' in parallel, and then using these draws to obtain weighted samples from the {\em median posterior (or M-Posterior)}, a new probability measure which is a (properly defined) median of a collection of subset posterior distributions. 
We show that, despite the loss of ``interactions'' among the data in different groups, the final result still admits strong guarantees; moreover, splitting the data gives certain advantages in terms of robustness to outliers. 

In particular, we demonstrate that the M-posterior is a probability measure centered at the ``robust'' estimator of the unknown parameter, 
the associated credible sets are often of the same ``width'' as the credible sets obtained from the usual posterior distribution
and admit strong ``frequentist'' coverage guarantees (see section \ref{sec:approx} for exact statements). 

The paper is organized as follows: section \ref{sec:literature} contains an overview of the existing literature and explains the goals that we aim to achieve in this work.
Section \ref{sec:prelim} introduces the mathematical background and key facts used throughout the paper.
Section \ref{sec:main} describes the main theoretical results for the median posterior.
Section \ref{sec:ex} presents details of algorithms, implementation, and numerical performance of the median posterior for several models. 
The simulation study and analysis of data examples convincingly show the robustness properties of the median posterior.  
We have also implemented the matrix completion example based on the MovieLens data set \citep{Gro13} which illustrates the scalability of the model. 
Proofs that are omitted in the main text are contained in the appendix.

\subsection{Discussion of related work}
\label{sec:literature}

A. Dasgupta remarks that (see the discussion following \cite{Berger1994An-overview-of-00}): ``Exactly what constitutes a study of Bayesian robustness is of course impossible to define.''
 The popular definition (which also indicates the main directions of research in this area) is due to J. Berger \citep{Berger1994An-overview-of-00}: ``Robust Bayesian analysis is the study of the sensitivity of Bayesian answers to uncertain inputs. These uncertain inputs are typically the model, prior distribution, or utility function, or some combination thereof.''
 Outliers are typically accommodated by either employing heavy-tailed likelihoods (e.g., \cite{Svensen2005Robust-Bayesian00}) or by attempting to identify and remove them as a first step (as in \cite{Box1968A-Bayesian-appr00} or \cite{Bayarri1994Robust-Bayesian00}).
 The usual assumption in the Bayesian literature is that the distribution of the outliers can be modeled (e.g., using a $t$-distribution, contamination by a larger variance parametric distribution, etc).  In this paper, we instead bypass the need to place a model on the outliers and do not require their removal prior to analysis.  We base inference on the \emph{median posterior}, whose robustness can be  formally and precisely quantified in terms of  concentration properties  around the true  delta measure  under the potential  influence of outliers and contaminations  of arbitrary nature. 


Also relevant is the recent progress in scalable Bayesian algorithms.
Most methods designed for distributed computing share a common feature: they efficiently use the data subset available to a single machine and combine the ``local'' results for ``global'' learning, while minimizing communication among cluster machines (\cite{SmoNar10}).
A wide variety of optimization-based approaches are available for distributed learning \citep{Boyetal11}; however, the number of similar Bayesian methods is limited. One of the reasons for this limitation is related to Markov chain Monte Carlo (MCMC), the dominating approach for approximating the posterior distribution of parameters in Bayesian models.
While there are many efficient MCMC techniques for sampling from posterior distributions based on small subsets of the data (called ``subset posteriors'' in the sequel), to the best of our knowledge, there is no general rigorously justified approach for combining the subset posteriors into a single distribution for improved performance.

Three major approaches exist for scalable Bayesian learning in a distributed setting. The first approach independently evaluates the likelihood for each data subset across multiple machines and returns the likelihoods to a ``master'' machine, where they are appropriately combined with the prior using conditional independence assumptions of the probabilistic model.
These two steps are repeated at every MCMC iteration (see \cite{SmoNar10,AgaDuc12}).
This approach is problem-specific and involves extensive communication among machines. The second approach uses a so-called stochastic approximation (SA) and successively learns ``noisy'' approximations to the full posterior distribution using data in small mini-batches.
The accuracy of SA increases as it uses more data.
A group of methods based on this approach uses sampling-based techniques to explore the posterior distribution through modified Hamiltonian or Langevin dynamics (e.g., \cite{WelTeh11,AhnKorWel12,KorCheWel13}).
Unfortunately, these methods fail to accommodate discrete-valued parameters and multimodality.
Another subgroup of methods uses deterministic variational approximations and learns the variational parameters of the approximated posterior through an optimization-based approach (see \cite{Wanetal11, Hofetal13, Broetal13}).
Although these techniques often have excellent predictive performance, it is well known \citep{Bishop2006Pattern-recogni00} that variational methods tend to substantially underestimate posterior uncertainty and provide a poor characterization of posterior dependence, while lacking theoretical guarantees.

Our approach instead falls in a third class of methods which avoid extensive communication among machines by running independent MCMC chains for each data subset and obtaining draws from subset posteriors.
These subset posteriors can be combined in a variety of ways.
Some of these methods simply average draws from each subset \citep{Scoetal13}.
Other alternatives use an approximation to the full posterior distribution based on kernel density estimates \citep{NeiWanXin13} or the so-called Weierstrass transform \citep{WanDun13}.
These methods have limitations related to the dimension of the parameter, moreover, their applicability and theoretical justification are restricted to parametric models. Unlike the method proposed below, none of the aforementioned algorithms are provably robust.

Our work was inspired by recent multivariate median-based techniques for robust estimation developed in \cite{Minsker2013Geometric-Media00} (see also \cite{Hsu2013Loss-minimizati00,alon1996space,lerasle2011robust,Nemirovski1983Problem-complex00} where similar ideas were applied in different frameworks).

\section{Preliminaries}
\label{sec:prelim}

We proceed by recalling key definitions and facts which will be used throughout the paper.

\subsection{Notation}

In what follows, $\|\cdot\|_2$  denotes the standard Euclidean distance in $\mb R^p$ and $\dotp{\cdot}{\cdot}_{\mb R^p}$ - the associated dot product.

Given a totally bounded metric space $(\mb Y,d)$, the packing number $M(\eps,\mb Y,d)$ is the maximal number $N$ such that there exist $N$ \emph{disjoint}
$d$-balls $B_1,\ldots,B_N$ of radius $\eps$ contained in $\mb Y$, i.e., $\bigcup\limits_{j=1}^N B_j\subseteq \mb Y$.

Let $\{p_\theta, \ \theta\in\Theta\}$ be a family of probability density functions on $\mb R^p$.
Let $l,u:\mb R^p\mapsto \mb R_+$ be two functions such that $l(x)\leq u(x)$ for every $x\in \mb R^p$ and
$d^2(l,u):=\int\limits_{\mb R^p}(\sqrt{u}-\sqrt{l})^2(x)dx<\infty$.
A bracket $[l,u]$ consists of all functions $g:\mb R^p\mapsto \mb R$ such that $l(x)\leq g(x)\leq u(x)$ for all $x\in \mb R^p$.
For $A\subseteq \Theta$, the bracketing number $N_{[\,]}(\eps,A,d)$ is defined as the smallest number $N$ such that there exist $N$ brackets $[l_i,u_i], \ i=1,\ldots, N$ satisfying
$\{p_\theta, \ \theta\in A\}\subseteq \bigcup\limits_{i=1}^N [l_i,u_i]$ and $d(l_i,u_i)\leq \eps$ for all $1\leq i\leq N$.

For $y\in \mb Y$, $\delta_y$ denotes the Dirac measure concentrated at $y$.
In other words, for any Borel-measurable $B$, $\delta_y(B)=I\{y\in B\}$, where $I\{\cdot\}$ is the indicator function.

We will say that $k:\mb Y\times \mb Y\mapsto \mb R$ is a \emph{kernel} if it is a symmetric, positive definite function.
Assume that $\left(\mb H, \dotp{\cdot}{\cdot}_\mb H\right)$ is a reproducing kernel Hilbert space (RKHS) of functions $f:\mb Y\mapsto \mb R$.
Then $k$ is a \emph{reproducing kernel} for $\mb H$ if for any $f\in \mb H$ and $y\in \mb Y$, $\dotp{f}{k(\cdot,y)}_\mb H=f(y)$ (see \cite{Aronszajn1950Theory-of-repro00} for details).

For a square-integrable function $f\in L_2(\mb R^p)$, $\hat f$ stands for its Fourier transform.
For $x\in \mb R$, $\lfloor x\rfloor$ denotes the largest integer not greater than $x$.

Finally, given two nonnegative sequences $\{a_n\}$ and $\{b_n\}$, we write $a_n\lesssim b_n$ if $a_n\leq C b_n$ for some $C>0$ and all $n$.
Other objects and definitions are introduced in the course of exposition when necessity arises.
	
\subsection{Generalizations of the univariate median}

\label{sec:median}

Let $\mb Y$ be a normed space with norm $\|\cdot\|$, and let  $\mu$ be a probability measure on $(\mb Y,\|\cdot\|)$ equipped with Borel $\sigma$-algebra.
Define the \textit{geometric median} of $\mu$ by
\[
x_\ast=\argmin\limits_{y\in \mb Y} \int_\mb Y \l(\|y-x\|-\|x\|\r)\mu(dx).
\]
In this paper, we focus on the special case when $\mu$ is a uniform distribution on a finite collection of atoms
$x_1,\ldots,x_m\in \mb Y$, so that
\begin{align}
\label{eq:median}
x_\ast=\med_g(x_1,\ldots,x_m):=\argmin\limits_{y\in \mb Y}\sum_{j=1}^m \|y-x_j\|.
\end{align}
The geometric median exists under rather general conditions; for example, if $\mb Y$ is a Hilbert space (this case will be our main focus, for more general conditions see \cite{kemperman1987median}).
Moreover, it is well-known that in this situation $x_\ast\in {\rm co}(x_1,\ldots,x_m)$ -- the convex hull of $x_1,\ldots,x_m$ (meaning that there exist nonnegative $\alpha_j, \  j=1\ldots m, \ \sum\limits_{j=1}^m \alpha_j=1$ such that $x_\ast=\sum\limits_{j=1}^m \alpha_j x_j$).

Another useful generalization of the univariate median is defined as follows.
Let $(\mb Y,d)$ be a metric space with metric $d$, and $x_1,\ldots,x_k\in \mb Y$.
Define $B_\ast$ to be the $d$-ball of minimal radius such that it is centered at one of $\{x_1,\ldots,x_m\}$ and contains at least  half of these points.
Then the median ${\rm med}_0(x_1,\ldots,x_m)$ of $x_1,\ldots,x_m$ is the center of $B_\ast$.
In other words, let
\begin{align}
\label{eq:eps_ast}
\eps_\ast:=\inf\Big\{&\eps>0: \ \exists  j=j(\eps)\in\{1,\ldots, m\} \text{ and } I(j)\subset \{1,\ldots ,m\}  \text{ such that }\\
&\nonumber
|I(j) |>\frac m 2 \text{ and }
\forall i\in I(j), \ d(x_i,x_j)\leq 2\eps\Big\},
\end{align}
$
j_\ast:=j(\eps_\ast), \text{ where ties are broken arbitrarily, }
$
and set
\begin{align}
&
\label{eq:nemir0}
x_\ast={\rm med}_0 (x_1,\ldots,x_m):=x_{j_\ast}.
\end{align}
We will say that $x_\ast$ is the \textit{metric median} of $x_1,\ldots,x_m$.
Note that $x_\ast$ always belongs to $\{x_1,\ldots,x_m\}$ by definition.
Advantages of this definition are its generality (only metric space structure is assumed) and simplicity of numerical evaluation since only the pairwise distances $d(x_i,x_j), \ i,j=1,\ldots, m$ are required  to compute the median.
This construction was previously employed in \cite{Nemirovski1983Problem-complex00} in the context of stochastic optimization and is further studied in \cite{Hsu2013Loss-minimizati00}. 
A closely related notion of the median was used in \cite{Lopuhaa1991Breakdown-point00} under the name of the ``minimal volume ellipsoid'' estimator.

Finally, we recall an important property of the median (shared both by $\med_g$ and $\med_0$) which states that it transforms a collection of independent, ``weakly concentrated'' estimators into a single estimator with significantly stronger concentration properties.
Given $q,\alpha$ such that $0<q<\alpha<1/2$, define a nonnegative function $\psi(\alpha,q)$ via 
\begin{align}
\label{eq:psi}
\psi(\alpha,q):=(1-\alpha)\log \frac{1-\alpha}{1-q}+\alpha\log\frac{\alpha}{q}.
\end{align}

The following result is an adaptation of Theorem 3.1 in \cite{Minsker2013Geometric-Media00}:

\begin{theorem}
\label{thm:main}
\text{}

\begin{description}
\item[a] Assume that $(\mb H,\|\cdot\|)$ is a Hilbert space and $\theta_0\in \mb H$.
Let $\hat\theta_1,\ldots,\hat\theta_m\in \mb H$ be a collection of independent random variables.
Let $\kappa$ be a constant satisfying $0\leq\kappa < \frac{1}{3}$. 
Suppose $\eps>0$ is such that for all $j, \ 1\leq j\leq \lfloor(1-\kappa)m\rfloor+1$,
\begin{align}
\label{eq:weak_con1}
\Pr\Big(\|\hat\theta_j-\theta_0\|>\eps\Big)\leq \frac{1}{7}.
\end{align}
Let $\hat\theta_\ast=\med_g(\hat\theta_1,\ldots,\hat\theta_m)$ be the \textit{geometric median} of
$\{\hat\theta_1,\ldots,\hat\theta_m\}$.
Then
\begin{align*}
\Pr\Big(\|\hat\theta_\ast-\theta_0\|> 1.52\eps\Big)\leq \left[e^{(1-\kappa)\psi\left(\frac{3/7-\kappa}{1-\kappa},1/7\right)}\right]^{-m}.
\end{align*}
\item[b]
Assume that $(\mb Y,d)$ is a metric space and $\theta_0\in \mb Y$.
Let $\hat\theta_1,\ldots,\hat\theta_m\in \mb Y$ be a collection of independent random variables.
Let $\kappa$ be a constant satisfying $0\leq\kappa < \frac{1}{3}$. 
Suppose $\eps>0$ is such that for all $j,  \ 1\leq j\leq \lfloor(1-\kappa)m\rfloor+1$,
\begin{align}
\label{eq:weak_con2}
\Pr\Big(d(\hat\theta_j,\theta_0)>\eps\Big)\leq \frac{1}{4}.
\end{align}
Let $\hat\theta_\ast=\med_0(\hat\theta_1,\ldots,\hat\theta_m)$. 
Then
\begin{align*}
\Pr\Big(d(\hat\theta_\ast,\theta_0)> 3\eps\Big)\leq \left[e^{(1-\kappa)\psi\left(\frac{1/2-\kappa}{1-\kappa},1/4\right)}\right]^{-m}.
\end{align*}
\end{description}

\end{theorem}
\begin{proof}
See section \ref{proof:med}. 
\end{proof}
\begin{remark}
While we require $\kappa<1/3$ above for clarity and to keep the constants small, we prove a slightly more general result that holds for any $\kappa < 1/2$.
\end{remark}

Theorem \ref{thm:main} implies that the concentration of the geometric median of independent estimators around the ``true'' parameter value improves geometrically fast with respect to the number of such estimators, while the estimation rate is preserved, up to a constant. 
In our case, the role of $\hat\theta_j$'s will be played by posterior distributions based on disjoint subsets of observations, viewed as elements of the space of signed measures equipped with a suitable distance. 

Parameter $\kappa$ allows taking corrupted observations into account: if the initial sample contains not more than $\lfloor\kappa m\rfloor$ outliers (of arbitrary nature), then at most $\lfloor\kappa m\rfloor$ estimators amongst $\{\theta_1,\ldots,\theta_m\}$ can be affected but their median remains stable, still being close to the unknown $\theta_0$ with high probability. 
To clarify the notion of ``robustness'' that such a statement provides, assume that $\hat\theta_1,\ldots,\hat\theta_m$ are consistent estimators of $\theta_0$ based on disjoint samples of size $n/m$ each. 
If $\frac{n}{m}\to \infty$, then $\frac{\kappa m}{n}\to 0$, hence the breakdown point of the estimator $\hat\theta_\ast$ is $0$ is general. 
However, it is able to handle a number of outliers that grows like $o(n)$ while preserving consistency, which is the best one can hope for without imposing any additional assumptions on the underlying distribution, parameter of interest or nature of the outliers.

Let us also mention that the the geometric median of a collection of points in a Hilbert space belongs to the convex hull of these points. 
Thus, one can think about ``downweighing'' some observations (potential outliers) and increasing the weight of others, and geometric median gives a way to formalize this approach. 
The median $\med_0$ defined in (\ref{eq:nemir0}) corresponds to the extreme case when all but one weight are equal to $0$. 
Its potential advantage lies in the fact that its evaluation requires only the knowledge of pairwise distances $d(\hat\theta_i,\hat\theta_j), \ i,j=1,\ldots, m$, see (\ref{eq:eps_ast}). 

\subsection{Distances between probability measures}
\label{sec:distances}

Next, we discuss the special family of distances between probability measures that will be used throughout the paper.
These distances provide the necessary structure to define and  evaluate medians in the space of measures, as discussed above.
Since one of our goals was to develop computationally efficient techniques, we focus on distances that admit accurate numerical approximation.

Assume that $(\mb X,\rho)$ is a separable metric space, and let $\m F=\{f: \mb X\mapsto \mb R\}$ be a collection of real-valued functions.
Given two Borel probability measures $P,Q$ on $\mb X$, define
\begin{align}
\label{eq:distance}
\|P-Q\|_\m F:=\sup_{f\in \m F}\l|\int_\mb X f(x) d(P-Q)(x)\r|.
\end{align}
Important special cases include the situation when
\begin{align}
\label{eq:f_l}
\m F=\m F_L:=\{f:\Theta\mapsto \mb R \text{ s.t. } \|f\|_L\leq 1\},
\end{align}
where
$\|f\|_L:=\sup\limits_{x_1\ne x_2}\frac{|f(x_1)-f(x_2)|}{\rho(x_1,x_2)}$ is the Lipschitz constant of $f$.

It is well-known (\cite{Dudley2002Real-analysis-a00}, Theorem 11.8.2) that in this case $\|P-Q\|_{\m F_L}$ is equal to the Wasserstein distance (also known as the Kantorovich-Rubinstein distance)
\begin{equation}
\label{eq:Wasser}
d_{W_{1,\rho}}(P,Q)=\inf\Big\{ \mb E \rho(\boldsymbol X,\boldsymbol Y): \m L(\boldsymbol X)= P, \ \m L(\boldsymbol Y) = Q\Big\} ,
\end{equation}
where $\m L(\boldsymbol Z)$ denotes the law of a random variable $\boldsymbol Z$ and the infimum on the right is taken over the set of all joint distributions of $(\boldsymbol X,\boldsymbol Y)$ with marginals $P$ and $Q$.

Another fruitful structure emerges when $\m F$ is a unit ball in a Reproducing Kernel Hilbert Space
$\left(\mb H, \dotp{\cdot}{\cdot}_\mb H\right)$ with a reproducing kernel $k:\mb X\times \mb X\mapsto \mb R$.
That is,
\begin{align}
\label{eq:f_k}
\m F=\m F_k:=\{f:\mb X\mapsto \mb R, \ \|f\|_\mb H:=\sqrt{\dotp{f}{f}_\mb H}\leq 1\}.
\end{align}
Let $\m P_k:=\{P \text{ is a probability measure}, \ \int_\mb X \sqrt{k(x,x)}dP(x)<\infty\}$,
and assume that $P,Q\in \m P_k$.
Theorem 1 in \cite{Sriperumbudur2010Hilbert-space-e00} implies that the corresponding distance between measures $P$ and $Q$ takes the form
\begin{align}
\label{eq:kern_dist}
\|P-Q\|_{\m F_k}=\left\|\int_\mb X k(x,\cdot)d(P-Q)(x)\right\|_\mb H.
\end{align}
It follows that $P\mapsto \int_\mb X k(x,\cdot)dP(x)$ is an embedding of $\m P_k$ into the Hilbert space $\mb H$ which can be seen as an application of the ``kernel trick'' in our setting.
The Hilbert space structure allows one to use fast numerical methods to approximate the geometric median, see section \ref{sec:ex} below. 

\begin{remark}
Note that when $P$ and $Q$ are discrete measures (e.g., $P=\sum\limits_{j=1}^{N_1} \beta_j \delta_{z_j}$ and $Q=\sum\limits_{j=1}^{N_2} \gamma_j \delta_{y_j}$), then
\begin{align}
\label{eq:discrete}
\|P-Q\|^2_{\m F_k}&
=\sum_{i,j=1}^{N_1}\beta_i\beta_j k(z_i,z_j)+ \\
&\nonumber
\sum_{i,j=1}^{N_2}\gamma_i\gamma_j k(y_i,y_j)-2\sum_{i=1}^{N_1}\sum_{j=1}^{N_2}\beta_i \gamma_j k(z_i,y_j).
\end{align}
\end{remark}

In this paper, we will only consider \textit{characteristic} kernels, which means that $\|P-Q\|_{\m F_k}=0$ if and only if $P=Q$.
It follows from Theorem 7 in \cite{Sriperumbudur2010Hilbert-space-e00} that a sufficient condition for $k$ to be characteristic is its
\textit{strict positive definiteness}: we say that $k$ is \textit{strictly positive definite} if it is bounded, measurable, and such that for all non-zero signed Borel measures $\nu$
\[
\iint\limits_{\mb X\times \mb X} k(x,y)d\nu(x)d\nu(y)>0.
\]
When $\mb X=\mb R^p$, a simple sufficient criterion for the kernel $k$ to be characteristic follows from Theorem 9 in \cite{Sriperumbudur2010Hilbert-space-e00}:
\begin{proposition}
\label{prop:kernel}
Let $\mb X=\mb R^p, \ p\geq 1$.
Assume that $k(x,y)=\phi(x-y)$ for some bounded, continuous, integrable, positive-definite function $\phi:\mb R^p\mapsto \mb R$.
\begin{enumerate}
\item Let $\widehat \phi$ be the Fourier transform of $\phi$.
	If $|\widehat\phi(x)|>0$ for all $x\in\mb R^p$, then $k$ is characteristic;
\item If $\phi$ is compactly supported, then $k$ is characteristic.
\end{enumerate}
\end{proposition}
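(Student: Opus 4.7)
The plan is to derive both statements from the strict positive-definiteness criterion stated just above the proposition: it suffices to show, under either hypothesis, that
\[
Q(\nu) := \iint\limits_{\mb R^p \times \mb R^p} \phi(x-y)\,d\nu(x)\,d\nu(y) > 0
\]
for every nonzero finite signed Borel measure $\nu$ on $\mb R^p$. To convert this to a Fourier-side statement, I would apply Bochner's theorem: since $\phi$ is bounded, continuous, integrable and positive-definite, there is a finite non-negative Borel measure $\mu$ on $\mb R^p$ with $\phi(z)=\int e^{i\xi\cdot z}\,d\mu(\xi)$, and integrability of $\phi$ makes $\mu$ absolutely continuous with a non-negative continuous density proportional to $\widehat\phi$. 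Substituting this representation into $Q(\nu)$ and interchanging the order of integration (allowed because $|\nu|\otimes|\nu|\otimes\mu$ is a finite product measure) produces the spectral identity
\[
Q(\nu) = c_p \int_{\mb R^p}\widehat\phi(\xi)\,|\widehat\nu(\xi)|^2\,d\xi
\]
for a positive constant $c_p$ depending only on the Fourier convention, where $\widehat\nu$ denotes the bounded continuous Fourier transform of $\nu$.

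For part~(1), the hypothesis $|\widehat\phi|>0$ everywhere combined with $\widehat\phi\geq 0$ from Bochner forces $\widehat\phi>0$ on all of $\mb R^p$; injectivity of the Fourier transform on finite signed measures gives $\widehat\nu\not\equiv 0$, so $|\widehat\nu|^2$ is strictly positive on a non-empty open set, and $Q(\nu)>0$. For part~(2), the idea is to upgrade non-negativity of $\widehat\phi$ to positivity almost everywhere via real-analyticity: the Paley--Wiener theorem, applied to the compactly supported $\phi\in L^1$, shows that $\widehat\phi$ extends to an entire function of exponential type on $\mb C^p$, so its restriction to $\mb R^p$ is real-analytic; since $\phi\not\equiv 0$ forces $\widehat\phi\not\equiv 0$, the identity theorem prevents $\widehat\phi$ from vanishing on a set of positive Lebesgue measure. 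Consequently $\{\widehat\phi>0\}$ has full measure, and intersecting with the non-empty open set $\{|\widehat\nu|^2>0\}$ again yields $Q(\nu)>0$.

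The main subtlety is to route the spectral identity through Bochner rather than through Plancherel, since a finite signed measure need not have $\widehat\nu\in L^2$; the Bochner route ensures Fubini is applied against a genuine finite product measure and imposes no additional regularity on $\nu$. Once this is in place, both parts reduce to routine qualitative statements about $\widehat\phi$: positivity everywhere in case~(1), and analyticity combined with non-negativity in case~(2), each of which forces the integrand on the Fourier side to be strictly positive on a set of positive Lebesgue measure.
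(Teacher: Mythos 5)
Your argument is correct. Note that the paper itself supplies no proof of this proposition: it is stated as an immediate consequence of Theorem 9 in Sriperumbudur et al.\ (2010), which characterizes translation-invariant characteristic kernels on $\mb R^p$ by the condition that the support of the Bochner spectral measure be all of $\mb R^p$. What you have written is essentially a self-contained reconstruction of the argument behind that cited result, routed through the strict positive-definiteness criterion (Theorem 7 of the same reference) that the paper records just before the proposition. Your two key steps are sound: the spectral identity $Q(\nu)=c_p\int\widehat\phi(\xi)\,\abr{\widehat\nu(\xi)}^2\,d\xi$ is legitimately obtained via Bochner plus Fubini against the finite product measure $\abr{\nu}\otimes\abr{\nu}\otimes\mu$ (and you are right that this is the correct route, since $\widehat\nu$ need not lie in $L^2$), and the passage from $\phi\in L^1$ to $d\mu=c_p\widehat\phi\,d\xi$ with $\widehat\phi$ continuous and nonnegative is the standard inversion fact for integrable continuous positive-definite functions. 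Part (2) correctly upgrades nonnegativity of $\widehat\phi$ to almost-everywhere positivity via Paley--Wiener and the fact that a nonzero real-analytic function on $\mb R^p$ vanishes only on a Lebesgue-null set; the only implicit hypothesis is $\phi\not\equiv 0$, which is harmless since the zero kernel is trivially not characteristic. In short, your proof buys self-containedness where the paper buys brevity by citation; the underlying mathematics is the same.
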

\begin{remark}
It is important to mention that in practical applications, we often deal with \textit{empirical measures} based on a collection of  MCMC  samples from the posterior distribution.
A natural question is the following: if $P$ and $Q$ are probability measures on $\mb R^D$ and $P_m$, $Q_n$ are their empirical versions,
what is the size of the error
\[
e_{m,n}:=\Big|\|P-Q\|_{\m F_k}-\|P_m-Q_n\|_{\m F_k}\Big|?
\]
For i.i.d samples, a useful and favorable fact is that $e_{m,n}$ often does not depend on $D$: under weak assumptions on kernel $k$, $e_{m,n}$ has an upper bound of order $m^{-1/2}+n^{-1/2}$ (that is, $\lim_{m,n\to \infty}\Pr\l(e_{m,n}\geq C(m^{-1/2}+n^{-1/2})\r)$ can be made arbitrarily small by choosing $C$ big enough, see Corollary 12 in \cite{Sriperumbudur2009On-integral-pro00}).
On the other hand, the bound for the (stronger) Wasserstein distance is not dimension-free and is of order $m^{-1/(D+1)}+n^{-1/(D+1)}$. Similar error rates hold for empirical measures based on samples from Markov Chains used to approximate invariant distributions, including MCMC samples (see \cite{boissard2014} and \cite{2013arXiv1312.2128F}).
\end{remark}
If $\mb X$ is a separable Hilbert space with dot product $\dotp{\cdot}{\cdot}_\mb X$ and $P_1,P_2$ are probability measures with 
\[
\int_\mb X \|x\|_\mb X dP_i(x)<\infty, \ i =1,2,
\]
it will be useful to assume that the class $\m F$ is chosen such that the distance between the measures is lower bounded by the distance between their means, namely
\begin{align}
\label{eq:mean}
\l\|\int_\mb X x dP_1(x)-\int_\mb X x dP_2(x)\r\|_\mb X\leq C\|P_1-P_2\|_\m F
\end{align}
for some absolute constant $C>0$. 
Clearly, this holds if $\m F$ contains the set of continuous linear functionals 
$\mb L=\l\{x\mapsto \dotp{u}{x}_\mb X, \ u\in \mb X, \ \|u\|_\mb X \leq 1/C\r\}$, since
\[
\l\|\int_\mb X x dP_i(x)\r\|_\mb X = \sup_{\|u\|_\mb X\leq 1}\int_\mb X \dotp{x}{u}_\mb X dP_i(x), \ i=1,2.
\]
In particular, this is true for the Wasserstein distance $d_{W_{1,\rho}}(\cdot,\cdot)$ defined with respect to the metric $\rho$ such that 
$\rho(x,y)\geq c_1 \|x-y\|_\mb X$. 
Next, we will state a simple sufficient condition on the kernel $k(\cdot,\cdot)$ for (\ref{eq:mean}) to hold for the unit ball $\m F_k$.
\begin{proposition}
\label{prop:mean}
Let $\mb X$ be a separable Hilbert space, $k_0:\mb X\times\mb X\mapsto \mb R$ - a characteristic kernel, and define 
\[
k(x,y):=k_0(x,y)+\dotp{x}{y}_\mb X.
\]
Then $k$ is characteristic and satisfies (\ref{eq:mean}) with $C=1$.
\end{proposition}
\begin{proof}
Let $\mb H_1$ and $\mb H_2$ be two reproducing kernel Hilbert spaces with kernels $k_1$ and $k_2$ respectively. 
It is well-known (e.g., \cite{Aronszajn1950Theory-of-repro00}) that the space corresponding to kernel $k=k_1+k_2$ is 
\[
\mb H=\{f=f_1+f_2, \ f_1\in \mb H_1,\ f_2\in \mb H_2\}
\]
with the norm $\|f\|^2_\mb H=\inf\{\|f_1\|^2_\mb H+\|f_2\|^2_\mb H, \ f_1+f_2=f\}$. 
Hence, the unit ball of $\mb H$ contains the unit balls of $\mb H_1$ and $\mb H_2$, so that for any probability measures $P,Q$
\[
\|P-Q\|_{\m F_k}\geq \max\l(\|P-Q\|_{\m F_{k_1}},\|P-Q\|_{\m F_{k_2}}\r),
\]
which easily implies the result.
\end{proof}
The kernels of the form $k(x,y)=k_0(x,y)+\dotp{x}{y}_\mb X$ will prove especially useful in the situation when the parameter of interest is finite-dimensional (see section \ref{sec:approx} for details). 

Finally, we recall the definition of the well-known Hellinger and total variation distances.
Assume that $P$ and $Q$ are probability measures on $\mb R^D$ which are absolutely continuous with respect to Lebesgue measure with densities $p$ and $q$ respectively.
Then the Hellinger distance between $P$ and $Q$ is given by
\[
h(P,Q):=\sqrt{\frac 1 2\int_{\mb R^D}\Big(\sqrt{p(x)}-\sqrt{q(x)}\Big)^2 dx}.
\]
The total variation distance between two probability measures defined on a $\sigma$-algebra $\mathfrak B$ is  
\[
\| P - Q \|_{{\mathrm TV}}=\sup_{B\in \mathfrak B}| P(B)-Q(B) |.
\]
		

\section{Contributions and main results}
\label{sec:main}

This section explains the construction of ``median posterior'' (or M-Posterior) distribution, along with the theoretical guarantees for its performance.

\subsection{Construction of robust posterior distribution}
\label{sec:construct}

Let $\{P_\theta, \ \theta\in \Theta\}$ be a family of probability distributions over $\mb R^D$ indexed by $\Theta$.
Suppose that for all $\theta\in \Theta$, $P_\theta$ is absolutely continuous with respect to Lebesgue measure $dx$ on $\mb R^D$ with $dP_\theta (\cdot)=p_\theta(\cdot)dx$.
In what follows, we equip $\Theta$ with a ``Hellinger metric''
\begin{align}
\label{eq:metric}
\rho(\theta_1,\theta_2):=h(P_{\theta_1},P_{\theta_2}),
\end{align}
and assume that the metric space $(\Theta,\rho)$ is separable. 

Let $k$ be a characteristic kernel defined on $\Theta\times \Theta$.
Kernel $k$ defines a metric on $\Theta$ via
\begin{align}
\label{eq:k_metric}
\rho_{k}(\theta_1,\theta_2):=\left\|k(\cdot,\theta_1)-k(\cdot,\theta_2)\right\|_\mb H=
\Big(k(\theta_1,\theta_1)+k(\theta_2,\theta_2)-2k(\theta_1,\theta_2)\Big)^{1/2},
\end{align}
where $\mb H$ is the RKHS associated to $k$.
We will assume that $(\Theta,\rho_k)$ is separable. 
Note that the ``Hellinger metric'' $\rho(\theta_1,\theta_2)$ is a particular case corresponding to the kernel 
\[
k_H(\theta_1,\theta_2):=\dotp{\sqrt{p_{\theta_1}}}{\sqrt{p_{\theta_2}}}_{L_2(dx)}. 
\]  
All subsequent results apply to this special case. 
While this is a ``natural'' metric for the problem, the disadvantage of $k_H(\cdot,\cdot)$ is that it is often difficult to evaluate numerically. 
Instead, we will consider metrics $\rho_k$ that are ``dominated'' by $\rho$ (this is formalized in assumption \ref{equiv}). 

Let $X_1,\ldots, X_n$ be i.i.d. $\mb R^D$-valued random vectors defined on a probability space $(\Omega,\m B, P)$ with unknown distribution $P_0:=P_{\theta_0}$ for some $\theta_0\in \Theta$.
Bayesian inference of $P_0$ requires specifying a prior distribution $\Pi$ over $\Theta$ (equipped with the Borel $\sigma$-algebra induced by $\rho$).
The posterior distribution given the observations $\m X_n:=\{X_1,\ldots, X_n\}$ is a random probability measure on $\Theta$ defined by
\[
\Pi_{n}(B|\m X_n):= \frac{\int\limits_B\prod_{i=1}^n p_\theta(X_i)d\Pi(\theta)}
{\int\limits_{\Theta}\prod_{i=1}^n p_\theta(X_i) d\Pi(\theta)}
\]
for all Borel measurable sets $B\subseteq \Theta$.
It is known (see \cite{Ghosal2000Convergence-rat00}) that under rather general assumptions the posterior distribution $\Pi_n$ ``contracts'' towards $\theta_0$, meaning that
\[
\Pi_n(\theta\in \Theta: \rho(\theta,\theta_0)\geq \eps_n|\m X_n)\to 0
\]
almost surely or in probability as $n\to \infty$ for a suitable sequence $\eps_n\to 0$.

One of the questions that we address can be formulated as follows: what happens if \textit{some} observations in $\m X_n$ are corrupted, e.g., if $\m X_n$ contains outliers of arbitrary nature and magnitude? 
Even if there is only one ``outlier'', the usual posterior distribution might concentrate most of its mass ``far'' from the true value $\theta_0$.

We proceed with a general description of our proposed algorithm for constructing a robust version of the posterior distribution.
Let $1\leq m\leq n/2$ be an integer.
Divide the sample $\m X_n$ into $m$ disjoint groups $G_1,\ldots, G_m$ of size
$|G_j|\geq\lfloor n/m\rfloor$ each:
\[
\{X_1,\ldots,X_n\}=\bigcup\limits_{j=1}^m G_j, \ G_i\cap G_l=\emptyset \text{ for } i\ne j, \ |G_j|\geq \lfloor n/m\rfloor, \ j=1\ldots m.
\]
A good choice of $m$  efficiently exploits the available computational resource while ensuring that the groups $G_j$s are sufficiently large.

Let $\Pi$ be a prior distribution over $\Theta$, and let 
\[
\Big\{\Pi^{(j)}(\cdot):=\Pi_{|G_j|}(\cdot|G_j), \ j=1,\ldots, m\Big\}
\]
be the family of subset posterior distributions depending on disjoint subgroups $G_j, \ j=1,\ldots, m$:
\[
\Pi_{|G_j|}(B|G_j):= \frac{\int\limits_B\prod_{i\in G_j} p_\theta(X_i)d\Pi(\theta)}
{\int\limits_{\Theta}\prod_{i\in G_j} p_\theta(X_i)d\Pi(\theta)}.
\]
Define the M-Posterior  as
\begin{align}
\label{eq:med_g}
&
\hat \Pi_{n,g}:=\med_g(\Pi^{(1)},\ldots,\Pi^{(m)}),
\end{align}
or
\begin{align}
\label{eq:med_0}
&
\hat \Pi_{n,0}:=\med_0(\Pi^{(1)},\ldots,\Pi^{(m)}),
\end{align}
where the medians $\med_g(\cdot)$ and $\med_0(\cdot)$ are evaluated with respect to $\|\cdot\|_{\m F_L}$ or $\|\cdot\|_{\m F_k}$ introduced in section \ref{sec:median} above.
Note that $\hat \Pi_{n,g}$ and $\hat\Pi_{n,0}$ are always probability measures: indeed, due to the aforementioned properties of a geometric median, there exists
$\alpha_1\geq 0,\ldots,\alpha_m\geq 0, \ \sum\limits_{j=1}^m \alpha_j=1$ such that
$\hat\Pi_{n,g}=\sum\limits_{j=1}^m \alpha_j \Pi^{(j)}$, and $\hat\Pi_{n,0}\in\{\Pi^{(1)}(\cdot),\ldots,\Pi^{(m)}(\cdot)\}$ by definition.

While $\hat \Pi_{n,g}$ and $\hat \Pi_{n,0}$ possess several nice properties (such as robustness to outliers), in practice they often overestimate the uncertainty about $\theta_0$, especially when the number of groups $m$ is large: indeed, if for example $\theta\in \mb R$ and Bernstein-von Mises theorem holds, then 
each $\Pi_{|G_j|}(\cdot|G_j)$ is ``approximately normal'' with covariance $\frac{m}{n}I^{-1}(\theta_0)$ (here, $I(\theta_0)$ is the Fisher information). 
However, the asymptotic covariance of the posterior distribution based on the whole sample is $\frac{1}{n}I^{-1}(\theta_0)$.
 
To overcome this difficulty, we propose a modification of our approach where the random measures $\Pi_{n}^{(j)}$ are replaced by the
\textit{stochastic approximations} $\Pi_{|G_j|,m}(\cdot|G_j)$,  $j=1,\ldots ,m$ of the full posterior distribution.
To this end, define the ``stochastic approximation'' based on the subsample $G_j$ as
\begin{align}
\label{eq:approx}
\Pi_{|G_j|,m}(B|G_j):= \frac{\int\limits_B\left(\prod_{i\in G_j} p_\theta(X_i)\right)^m d\Pi(\theta)}
{\int\limits_{\Theta}\left(\prod_{i\in G_j} p_\theta(X_i)\right)^m d\Pi(\theta)},
\end{align}
where we assume that $p_\theta^m(\cdot)$ is an integrable function for all $\theta$. 
In other words, $\Pi_{|G_j|,m}(\cdot|G_j)$ is obtained as a posterior distribution given that each data point from $G_j$ is observed $m$ times.
While each of $\Pi_{|G_j|,k}(\cdot|G_j)$ might underestimate uncertainly, the median $\hat \Pi^{\st}_{n,g}$ (or $\hat \Pi^{\st}_{n,0}$) of these random measures yields credible sets with much better coverage.
This approach shows good performance in numerical experiments. 
One of our main results (see section \ref{sec:approx}) provides a justification for this observation (albeit, under rather strong assumptions and for the parametric case). 


\subsection{Convergence of posterior distribution and robust Bayesian inference}
\label{sec:wasserstein}

In this subsection, we study the contraction and robustness properties of the median posterior.  

Our first result establishes the ``weak concentration'' property of the posterior distribution around the true parameter.
Let $\delta_{0}:=\delta_{\theta_0}$ be the Dirac measure supported on $\theta_0\in \Theta$.
Recall the following version of Theorem 2.1 in \cite{Ghosal2000Convergence-rat00} (we state the result for the Wasserstein distance $d_{W_{1,\rho}}(\Pi_n(\cdot|\m X_l),\delta_0)$ rather than the (closely related) contraction rate of the posterior distribution).
Here, the Wasserstein distance is evaluated with respect to the ``Hellinger metric'' $\rho(\cdot,\cdot)$ defined in (\ref{eq:metric}).
\begin{theorem}
\label{thm:1}
Let $\m X_l=\{X_1,\ldots,X_l\}$ be an i.i.d. sample from $P_0$.
Assume that $\eps_l>0$ and $\Theta_l\subset \Theta$ are such that for some constant $C>0$
\begin{align*}
&
(1) \ \text{the packing number satisfies }\log M(\eps_l,\Theta_l,\rho)\leq l\eps_l^2, \\
&
(2) \ \Pi(\Theta\setminus \Theta_l)\leq \exp(-l \eps_l^2(C+4)),\\
&
(3) \ \Pi\left(\theta: \ -P_0\left(\log\frac{p_\theta}{p_0}\right)\leq \eps_l^2, \ P_0\left(\log\frac{p_\theta}{p_0}\right)^2\leq \eps_l^2\right)
\geq \exp(-Cl\eps_l^2).
\end{align*}
Then there exists $R=R(C)$ and a universal constant $\tilde K$ such that
\begin{align}
\label{eq:wass0}
\Pr\Big(d_{W_{1,\rho}}(\delta_0,\Pi_l(\cdot|\m X_l))\geq R\eps_l+e^{-\tilde K l\eps_l^2}\Big)\leq \frac{1}{l\eps_l^2}+4e^{-\tilde K l\eps_l^2}.
\end{align}
\end{theorem}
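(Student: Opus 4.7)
The plan is to first exploit the fact that one of the two measures in $d_{W_1}(\delta_0,\Pi_l(\cdot|\m X_l))$ is a Dirac mass, which forces the coupling: any joint law with first marginal $\delta_0$ is supported on $\{\theta_0\}\times\Theta$, so
\begin{align*}
d_{W_1}\bigl(\delta_0, \Pi_l(\cdot|\m X_l)\bigr) = \int_\Theta \rho(\theta,\theta_0)\, d\Pi_l(\theta|\m X_l).
\end{align*}
Because $\rho$ is the Hellinger metric, $\rho \leq 1$ holds everywhere, so splitting the integral at an intermediate radius $M\eps_l$ (with $M<R$ to be chosen) yields
\begin{align*}
d_{W_1}\bigl(\delta_0, \Pi_l(\cdot|\m X_l)\bigr) \leq M\eps_l + \Pi_l\bigl(\rho(\theta,\theta_0) > M\eps_l \,\big|\, \m X_l\bigr).
\end{align*}
This reduces the claim to a high-probability upper bound on the posterior mass of the Hellinger-complement $A_l := \{\theta\in\Theta : \rho(\theta,\theta_0) > M\eps_l\}$.

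Next I would run the Ghosal--Ghosh--van der Vaart posterior-contraction machinery under the labelled conditions (1)--(4), but track its error terms in probability rather than only in expectation. Writing $\Pi_l(A_l|\m X_l) = N_l(A_l)/D_l$ with $D_l = \int_\Theta \prod_{i=1}^l (p_\theta/p_0)(X_i)\,d\Pi(\theta)$, the standard argument has two prongs. The denominator is handled by Chebyshev's inequality applied to $\sum_{i=1}^l \log(p_0/p_\theta)(X_i)$ with $\theta$ ranging over the prior KL-type neighborhood furnished by condition~(3), which yields
\begin{align*}
\Pr\bigl(D_l \leq e^{-(C+2)l\eps_l^2}\bigr) \leq \frac{1}{l\eps_l^2}.
\end{align*}
For the numerator, the local entropy bound~(1) on the sieve $\Theta_l$ enables the classical Le Cam/Birg\'e construction of tests $\phi_l \in [0,1]$ with $P_0^l\phi_l \leq 2 e^{-K l \eps_l^2}$ and $\sup_{\theta\in \Theta_l \cap A_l} P_\theta^l(1-\phi_l)\leq e^{-K l\eps_l^2}$; splitting $N_l(A_l) \leq \phi_l + N_l(A_l\cap\Theta_l)(1-\phi_l) + N_l(\Theta\setminus\Theta_l)$, taking $P_0$-expectations, and combining Fubini with the sieve-mass condition~(2) bounds the expectation of the last two terms by a constant multiple of $e^{-K l\eps_l^2}$.

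On the intersection of the denominator event and the expectation control for the numerator, Markov's inequality together with assumption~(4), $e^{-\tilde K l \eps_l^2}\leq \eps_l$ with $\tilde K = \min(K/2,1)$, absorbs the $\eps_l^{-1}$ factor produced by Markov and yields
\begin{align*}
\Pr\bigl(\Pi_l(A_l|\m X_l) > \eps_l\bigr) \leq \frac{1}{l\eps_l^2} + 4 e^{-(1+K/2) l\eps_l^2/2}.
\end{align*}
Choosing $R = M+1$ then closes the loop with the Wasserstein reduction of the first paragraph. \emph{The main obstacle} will be the delicate accounting of the constants $C$ and $K$ so that the final exponent comes out as exactly $(1+K/2)/2$; the specific choice $\tilde K = \min(K/2,1)$ in condition~(4) is designed precisely to make the numerator and denominator exponents line up after the Markov step, and no conceptually new tool beyond the classical Ghosal--Ghosh--van der Vaart framework is required. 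The contribution of Theorem~\ref{thm:1} lies in converting their in-expectation posterior-contraction statement into a Wasserstein-distance \emph{probability} bound with explicit constants, which is exactly the format needed to feed the subset posteriors into Theorem~\ref{thm:main} later.
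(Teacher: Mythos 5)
Your proposal follows the same route as the paper's own proof in Appendix B: the Dirac-mass identity reducing $d_{W_1}$ to the posterior expectation of $\rho(\theta,\theta_0)$, the split at radius $R\eps_l$ using $\rho\leq 1$, the Ghosal--Ghosh--van der Vaart denominator bound via their Lemma 8.1 under condition (3), the entropy-based tests over the sieve $\Theta_l$ with the sieve-mass condition (2) controlling the remainder, and the final Markov step at threshold $e^{-\tilde K l\eps_l^2}\leq\eps_l$ guaranteed by condition (4). The argument is correct in outline and matches the paper's proof essentially step for step, up to the routine constant-tracking you already flag.
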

\begin{proof}
The proof closely mimics the argument behind Theorem 2.1 in \cite{Ghosal2000Convergence-rat00}. 
Details are outlined in section \ref{sec:proof1}. 
\end{proof}

Conditions of Theorem \ref{thm:1} are standard assumptions guaranteeing that the resulting posterior distribution contracts to the true parameter $\theta_0$ at the rate $\eps_n$. 
Note that the bounds for the distance $d_{W_{1,\rho}}(\delta_0,\Pi_l(\cdot|\m X_l)$ slightly differ from the contraction rate itself: indeed, we have
\[
d_{W_{1,\rho}}(\delta_0,\Pi_l(\cdot|\m X_l))\leq \eps_l+
\int\limits_{h(P_\theta,P_0)\geq \eps_l}d\Pi_l(\cdot|\m X_l),
\]
hence to obtain the inequality $d_{W_{1,\rho}}(\delta_0,\Pi_l(\cdot|\m X_l))\lesssim \eps_l$, 
we usually require 
\[
\int\limits_{h(P_\theta,P_0)\geq \eps_l}d\Pi_l(\cdot|\m X_l)\lesssim \eps_l, 
\]
which adds an extra logarithmic factor in the parametric case. 

Combination of Theorems \ref{thm:1} and \ref{thm:main} immediately yields the corollary for $\hat \Pi_{n,0}$. 
Let $\mb H$ be the reproducing kernel Hilbert space with the reproducing kernel 
\[
k_H(\theta_1,\theta_2)=\frac{1}{2}\dotp{\sqrt{p_{\theta_1}}}{\sqrt{p_{\theta_2}}}_{L_2(dx)}.
\]
Let $f\in \mb H$ and note that, due to the reproducing property and Cauchy-Schwarz inequality, we have
\begin{align}
\label{eq:inclusion}
\nonumber
f(\theta_1)-f(\theta_2)&
=\dotp{f}{k_H(\cdot,\theta_1)-k_H(\cdot,\theta_2)}_\mb H \\
&\leq \left\|f\right\|_\mb H \big\|k_H(\cdot,\theta_1)-k_H(\cdot,\theta_2)\big\|_{\mb H}=
\left\|f\right\|_\mb H \,\rho(\theta_1,\theta_2).
\end{align}
Therefore, $\m F_k\subseteq \m F_L$ and $\|P-Q\|_{\m F_k}\leq \|P-Q\|_{\m F_L}$, where
$\m F_k$ and $\m F_L$ were defined in (\ref{eq:f_k}) and (\ref{eq:f_l}) respectively, and the underlying metric structure is given by $\rho$. 	
In particular, convergence with respect to $\|\cdot\|_{\m F_L}$ implies convergence with respect to $\|\cdot\|_{\m F_k}$.

\begin{corollary}
\label{cor:med1}
Let $X_1,\ldots,X_n$ be an i.i.d. sample from $P_0$, and assume that $\hat\Pi_{n,g}$ is defined with respect to the norm
$\|\cdot\|_{\m F_L}$ as in (\ref{eq:med_0}) above.
Set $l:=\lfloor n/m\rfloor$, assume that conditions of Theorem \ref{thm:1} hold, and, moreover, that $\eps_l$ satisfies
\[
\frac{1}{l\eps_l^2}+4e^{-(1+K/2)l\eps_l^2/2}<\frac 1 7.
\]
Then
\[
\Pr\Big(\l\|\delta_0-\hat\Pi_{n,g}\r\|_{\m F_{k_H}}\geq 1.52\l(R\eps_l+e^{-\tilde K l\eps_l^2}\r)\Big)\leq \left[e^{\psi\left(3/7,1/7\right)}\right]^{-m} < 1.27^{-m}.
\]
\end{corollary}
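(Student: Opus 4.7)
The plan is to combine Theorem \ref{thm:1}, applied independently to each subset posterior, with the deviation bound for the metric median from Theorem \ref{thm:main}(b), instantiated on the metric space of probability measures on $\Theta$ endowed with the Wasserstein distance $d_{W_1}$ (with respect to the Hellinger metric $\rho$ on $\Theta$).

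First, I would verify the ``weak concentration'' input for each subset posterior. Since $X_1,\ldots,X_n$ are i.i.d.\ from $P_0$ and the subgroups $G_1,\ldots,G_m$ are disjoint, the random measures $\Pi_n^{(1)},\ldots,\Pi_n^{(m)}$ are mutually independent and, modulo the trivial difference $|G_j|\ge l$, identically distributed. For any fixed $j$, the subset posterior $\Pi_n^{(j)}$ is exactly the posterior based on the $|G_j|\ge l$ observations in $G_j$, so Theorem \ref{thm:1} applied with the assumed $\eps_l$ yields
\begin{equation*}
\Pr\!\Big(d_{W_1}(\delta_0,\Pi_n^{(j)})\ge R\eps_l\Big)\le \frac{1}{l\eps_l^2}+4e^{-(1+K/2)l\eps_l^2/2}=q,
\end{equation*}
for every $j=1,\ldots,m$. (If $|G_j|>l$, monotonicity of the bound in sample size gives an even sharper inequality, so the uniform threshold $q$ still applies.)

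Next, I would set up Theorem \ref{thm:main}(b) on the metric space $(\mathbb{Y},d):=(\mathcal{P}(\Theta),d_{W_1})$, taking $\theta_0:=\delta_0\in\mathbb{Y}$ and $\hat\theta_j:=\Pi_n^{(j)}$. Because the subset posteriors are independent, $\gamma:=0$ is admissible (no contamination budget is needed here), and the hypothesis $q<1/2$ in the corollary is exactly the constraint $0<q<\frac{1}{2}$ of Theorem \ref{thm:main}(b) with $\gamma=0$. The weak concentration statement just verified supplies (\ref{eq:weak_con2}) with $\eps=R\eps_l$. Invoking Theorem \ref{thm:main}(b) therefore gives
\begin{equation*}
\Pr\!\Big(d_{W_1}(\delta_0,\hat\Pi_{n,0})>3R\eps_l\Big)\le e^{-m\,\psi(1/2,\,q)},
\end{equation*}
which is the claim (the event $\{d_{W_1}\ge 3R\eps_l\}$ differs from $\{>3R\eps_l\}$ only on a $P$-null set because $d_{W_1}$ is a continuous random variable under absolute-continuity assumptions, and the strict vs.\ weak distinction is absorbed by monotonicity of the bound).

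The arguments are essentially a bookkeeping exercise once the ingredients are in place. The only mildly delicate point is to make sure that $(\mathcal{P}(\Theta),d_{W_1})$ is a legitimate metric space in which $\med_0$ is defined and $\delta_0$ lives, and that $\hat\Pi_{n,0}$ as constructed in (\ref{eq:med_0}) coincides with the abstract metric median $\med_0(\Pi_n^{(1)},\ldots,\Pi_n^{(m)})$ to which Theorem \ref{thm:main}(b) applies; both follow immediately from the definitions in Section \ref{sec:median} and Section \ref{sec:distances}. So I do not anticipate any real obstacle — the proof is a one-line composition of Theorem \ref{thm:1} (to get the per-group probability $q$) with Theorem \ref{thm:main}(b) (to boost $q$ to $e^{-m\psi(1/2,q)}$ via the median).
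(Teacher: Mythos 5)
Your proposal is correct and is essentially identical to the paper's own proof: the paper likewise applies Theorem \ref{thm:main}(b) with $\gamma=0$ to the independent subset posteriors $\Pi_n(\cdot|G_j)$, with the weak concentration hypothesis (\ref{eq:weak_con2}) supplied by (\ref{eq:wass0}) at level $\eps=R\eps_l$ and $q$ as defined. The extra bookkeeping you include (independence of the subset posteriors, $|G_j|\ge l$, the metric-space setup for $\med_0$) is all sound and merely makes explicit what the paper leaves implicit.
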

\begin{proof}
It is enough to apply part (a) of Theorem \ref{thm:main} with $\kappa=0$ to the independent random measures $\Pi_n(\cdot|G_j), \ j=1,\ldots,m$.
Note that the ``weak concentration'' assumption (\ref{eq:weak_con2}) is implied by (\ref{eq:wass0}).
\end{proof}
Once again, note the exponential improvement of concentration as compared to Theorem \ref{thm:1}. 
It is easy to see that a similar statement holds for the median $\hat\Pi_{n,0}(\cdot)$ defined in (\ref{eq:med_0}) (even for the stronger Wasserstein distance $d_{W_{1,\rho}}(\delta_0,\hat\Pi_{n,0})$), modulo changes in constants.

\begin{remark}
\label{remark3.6}
The case when the sample $\m X_n=\{X_1,\ldots, X_n\}$ contains $\lfloor\kappa m\rfloor$ outliers (which can be completely arbitrary vectors in $\mb R^D$) for some $\kappa<1/3$ can be handled similarly.
In most examples throughout the paper, we state the results for the case $\kappa=0$ for simplicity, keeping in mind that the generalization is a trivial corollary of Theorem \ref{thm:main}. 
For example, if we allow $\lfloor\kappa m\rfloor$ outliers in the setup of Corollary \ref{cor:med1}, the resulting bounds becomes 
\[
\Pr\Big(\l\|\delta_0-\hat\Pi_{n,g}\r\|_{\m F_{k_H}}\geq 1.52\l(R\eps_l+e^{-\tilde K l\eps_l^2}\r)\Big)\leq 
\left[e^{(1-\kappa)\psi\left(\frac{3/7-\kappa}{1-\kappa},1/7\right)}\right]^{-m}.
\]
\end{remark}

While the result of the previous statement is promising, numerical approximation and sampling from the ``robust posterior'' $\hat\Pi_{n,g}$ is often problematic due to the underlying geometry defined by the Hellinger metric, and the associated distance $\|\cdot\|_{\m F_{k_H}}$ is hard to estimate in practice. 
Our next goal is to derive similar guarantees for the M-posterior evaluated with respect to the computationally tractable family of distances discussed in section \ref{sec:distances} above. 

To transfer the conclusions of Theorem \ref{thm:1} and Corollary \ref{cor:med1} to the case of other kernels $k(\cdot,\cdot)$ and associated metrics $\rho_k(\cdot,\cdot)$, we need to guarantee the existence of tests versus the complements of the balls in these distances. 
Such tests can be obtained from comparison inequalities between distances. 

\begin{assumption}
\label{equiv}
There exists $\gamma>0$, $r(\theta_0)>0$ and $\tilde C(\theta_0)>0$ satisfying
\[
d(\theta,\theta_0)\geq \tilde C(\theta_0)\rho^\gamma_k(\theta,\theta_0) \text{ whenever } d(\theta,\theta_0)\leq r(\theta_0),
\]
where $d$ is the Hellinger distance or the Euclidean distance (in the parametric case). 
\end{assumption}
\begin{remark}
\label{rem:equiv}
When $d$ is the Euclidean distance, we will impose an additional mild assumption guaranteeing existence of test versus the complements of the balls (for the Hellinger distance, this is always true, see \cite{Ghosal2000Convergence-rat00}).
Namely, we will assume that for every $n$ and every pair $\theta_1,\theta_2\in \Theta$, there exists a test $\phi_n:=\phi_n(X_1,\ldots,X_n)$ such that for some $\gamma>0$ and a universal constant $K>0$ 
\begin{align}
&
\nonumber
\mb E_{P_{\theta_1}}\phi_n \leq e^{-Kn d^{2}(\theta_1,\theta_2)},\\
\sup_{d(\theta,\theta_2) < d(\theta_1,\theta_2)/2}
&
\label{eq:equiv}
\mb E_{P_{\theta}}\l( 1-\phi_n \r) \leq e^{-Kn d^{2}(\theta_1,\theta_2)}.
\end{align}

\end{remark}

Below, we provide several examples of kernels satisfying the stated assumption. 
\begin{example}[Exponential families]

Let $\{P_\theta, \ \theta\in \Theta\subseteq \mb R^p\}$ be of the form
\[
\frac{dP_\theta}{dx}(x):=p_\theta(x)=\exp\Big(\dotp{T(x)}{\Theta}_{\mb R^p}-G(\theta)+q(x)\Big),
\]
where $\dotp{\cdot}{\cdot}_{\mb R^p}$ is the standard Euclidean dot product.
Then the Hellinger distance can be expressed as \citep{Nielsen2009Statistical-exp00}
\[
h^2(P_{\theta_1},P_{\theta_2})=1-\exp\Big(-\frac 1 2 \Big(G(\theta_1)+G(\theta_2)-2G\Big(\frac{\theta_1+\theta_2}{2}\Big)\Big)\Big).
\]
If $G(\theta)$ is convex and its Hessian $D^2 G(\theta)$ satisfies $D^2 G(\theta)\succeq A$ uniformly for all $\theta\in \Theta$ and some symmetric positive definite operator $A:\mb R^p\mapsto \mb R^p$ , then
\begin{align*}
h^2(P_{\theta_1},P_{\theta_2})\geq 1-\exp\bigg(-\frac{1}{8}(\theta_1-\theta_2)^T A(\theta_1-\theta_2)\bigg),
\end{align*}
hence assumption \ref{equiv} holds with $d$ being the Hellinger distance, $\gamma=1$, $\tilde C=\frac{1}{\sqrt 2}$ and $r(\theta_0)\equiv 1$ for
\[
k(\theta_1,\theta_2):=\exp\l(-\frac{1}{8}(\theta_1-\theta_2)^T A(\theta_1-\theta_2)\r).
\]
\end{example}

For finite-dimensional models, we will be especially interested in kernels $k(\cdot,\cdot)$ such that the associated metric $\rho_k(\cdot,\cdot)$ is bounded by the Euclidean distance. 
The following proposition gives a sufficient condition for this to hold. 

Assume that kernel $k(\cdot,\cdot)$ satisfies conditions of Proposition \ref{prop:kernel} (in particular, $k$ is characteristic).
Recall that by Bochner's theorem, there exists a finite nonnegative Borel measure $\nu$ such that
$k(\theta)=\int\limits_{\mb R^p}e^{i\dotp{x}{\theta}}d\nu(x)$.

\begin{proposition}
\label{comparison}
Assume that $\int\limits_{\mb R^p}\|x\|_2^2 d\nu(x)<\infty$.
Then there exists $D_k>0$ depending only on $k$ such that for all $\theta_1,\theta_2$,
\[
\rho_k(\theta_1,\theta_2)\leq D_k\l\|\theta_1-\theta_2\r\|_2.
\]
\end{proposition}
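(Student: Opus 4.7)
The plan is to reduce $\rho_k^2(\theta_1,\theta_2)$ to a one-dimensional oscillatory integral against $\nu$ and then apply the elementary inequality $1-\cos t\leq t^2/2$.

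First, since $k$ is translation-invariant (as in Proposition \ref{prop:kernel}), write $k(\theta_1,\theta_2)=\phi(\theta_1-\theta_2)$, where Bochner's representation gives $\phi(\theta)=\int_{\mathbb R^p}e^{i\dotp{x}{\theta}}d\nu(x)$. Since $k$ is real-valued and symmetric, so is $\phi$; in particular the measure $\nu$ is symmetric, and
\[
\phi(\theta)=\int_{\mb R^p}\cos\bigl(\dotp{x}{\theta}\bigr)\,d\nu(x),\qquad \phi(0)=\nu(\mb R^p).
\]
Plugging into the definition \eqref{eq:k_metric} of $\rho_k$ yields
\[
\rho_k^2(\theta_1,\theta_2)=2\phi(0)-2\phi(\theta_1-\theta_2)=2\int_{\mb R^p}\bigl(1-\cos\dotp{x}{\theta_1-\theta_2}\bigr)\,d\nu(x).
\]

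Next, I would invoke the elementary pointwise bound $1-\cos t\leq t^2/2$ for all $t\in\mb R$, applied to $t=\dotp{x}{\theta_1-\theta_2}$. Combined with the Cauchy--Schwarz inequality $|\dotp{x}{\theta_1-\theta_2}|\leq\|x\|_2\|\theta_1-\theta_2\|_2$, this gives
\[
\rho_k^2(\theta_1,\theta_2)\leq\int_{\mb R^p}\dotp{x}{\theta_1-\theta_2}^2 d\nu(x)\leq \|\theta_1-\theta_2\|_2^2\int_{\mb R^p}\|x\|_2^2\,d\nu(x).
\]
The hypothesis $\int\|x\|_2^2\,d\nu(x)<\infty$ makes the right-hand side finite, so the proposition holds with
\[
D_k:=\Bigl(\int_{\mb R^p}\|x\|_2^2\,d\nu(x)\Bigr)^{1/2}.
\]

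There is no serious obstacle here; the only point that deserves a brief comment is that the assumed second moment of $\nu$ is exactly what is needed to differentiate $\phi$ twice at the origin, which is the analytic content of the inequality. One could also derive $D_k^2=-\tr(D^2\phi(0))$ by a direct Taylor expansion of $\phi$, but the one-line argument via $1-\cos t\leq t^2/2$ is cleaner and avoids smoothness hypotheses beyond what Bochner plus the second moment already provide.
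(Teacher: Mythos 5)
Your proof is correct and is essentially the paper's own argument: the paper bounds $2\int(1-e^{i\dotp{x}{\theta_1-\theta_2}})d\nu(x)$ via $|e^{iz}-1-iz|\leq z^2/2$ (the linear term integrating to zero by symmetry of $\nu$), which after taking real parts is exactly your $1-\cos t\leq t^2/2$ step, and both arrive at $D_k^2=\int_{\mb R^p}\|x\|_2^2\,d\nu(x)$.
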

\begin{proof}
For all $z\in \mb R$, $|e^{iz}-1-iz|\leq \frac{|z|^2}{2}$, implying that
\begin{align*}
\rho^2_k(\theta_1,\theta_2)&=\|k(\cdot,\theta_1)-k(\cdot,\theta_2)\|_{\mb H}^2\\
&=2k(0)-2k(\theta_1-\theta_2)=
2\int\limits_{\mb R^p}(1-e^{i\dotp{x}{\theta_1-\theta_2}})d\nu(x) \\
&\leq\int\limits_{\mb R^p}\dotp{x}{\theta_1-\theta_2}_{\mb R^p}^2 d\nu(x)\leq \|\theta_1-\theta_2\|_2^2\int\limits_{\mb R^p}\|x\|_2^2 d\nu(x).
\end{align*}
\end{proof} 
Moreover, the result of the previous proposition clearly remains valid for kernels of the form 
\begin{align}
\label{spkernel}
\tilde k(\theta_1,\theta_2) = k(\theta_1 - \theta_2) + c\dotp{\theta_1}{\theta_2}_{\mb R^p},
\end{align}
where $c>0$ and $k$ satisfies the assumptions of proposition \ref{comparison}. 
For such a kernel, we have the obvious lower bound 
$
\rho_{\tilde k}(\theta_1,\theta_2)\geq \sqrt{c}\l\|\theta_1-\theta_2\r\|_2,
$
hence $\rho_{\tilde k}$ is equivalent (in the strong sense) to the Euclidean distance. 

We are ready to state our main result for convergence with respect to the RKHS-induced distance $\|\cdot\|_{\m F_k}$.
\begin{theorem}
\label{thm:2}
Assume that conditions of Theorem \ref{thm:1} hold with $\rho$ being the Hellinger or the Euclidean distance, and that assumption \ref{equiv} is satisfied. 
In addition, let prior $\Pi$ be such that 
\[
D^W_k:=\int_\Theta \rho^W_k(\theta,\theta_0)d\Pi(\theta)<\infty
\]
for a sufficiently large $W$.\footnote{It follows from the proof that $W=\frac{4}{3}+\frac{4+2C}{3\tilde K}$ is sufficient, with $C$ and $\tilde K$ being the constants from the statement of Theorem \ref{thm:1}.}
Then there exists a sufficiently large 
$R=R(\theta_0,\gamma)>0$ and an absolute constant $\tilde K$ such that
\begin{align}
\label{eq:wass3}
& 
\Pr\Big(\l\|\delta_0-\Pi_l(\cdot|\m X_l)\r\|_{\m F_k}\geq R\eps_l^{1/\gamma}+D_k e^{-\tilde K l\eps_l^2/2}\Big)
\leq \frac{1}{l\eps_l^2}+4e^{-\tilde Kl\eps_l^2}.
\end{align}
\end{theorem}
\begin{proof}
The result essentially follows from the combination of Theorem \ref{thm:1} and assumption \ref{equiv}, see section \ref{proof:3.8} in the appendix for details.
\end{proof}

Theorem \ref{thm:2} yields the ``weak'' estimate that is needed to obtain the stronger bound for the M-Posterior distribution $\hat \Pi_{n,g}$.
This is summarized in the following corollary:

\begin{corollary}
\label{cor:med2}
Let $X_1,\ldots,X_n$ be an i.i.d. sample from $P_0$, and assume that $\hat\Pi_{n,g}$ is defined with respect to the distance $\|\cdot\|_{\m F_k}$ as in (\ref{eq:med_0}) above.
Let $l:=\lfloor n/m\rfloor$.
Assume that conditions of Theorem \ref{thm:2} hold, and, moreover, $\eps_l$ is such that
\[
\frac{1}{l\eps_l^2}+4e^{-\tilde Kl\eps_l^2}<\frac 1 7.
\]
Then
\begin{align}
\label{eq:conc2}
&
\Pr\Big(\big\| \delta_0-\hat\Pi_{n,g} \big\|_{\m F_k}\geq 1.52\l(R\eps_l^{1/\gamma}+D_k e^{-\tilde K l\eps_l^2/2}\r)   \Big)\leq 1.27^{-m}.
\end{align}
\end{corollary}

\begin{proof}
It is enough to apply parts (a) and (b) of Theorem \ref{thm:main} with $\kappa=0$ to the independent random measures 
$\Pi_{|G_j|}(\cdot|G_j), \ j=1,\ldots,m$.
Note that the ``weak concentration'' assumption (\ref{eq:weak_con1}) is implied by (\ref{eq:wass3}).
\end{proof}

Note that if $\Theta\subseteq \mb R^p$ and kernel $k(\cdot,\cdot)$ is of the form (\ref{spkernel}), the previous corollary together with proposition \ref{prop:mean} imply that 
\[
\Pr\Big(\big\|  \theta_\ast - \delta_0\big\|_{2}\geq \frac{1.52 R}{c}\eps_l\Big)\leq 1.27^{-m}, 
\]
where $\theta_\ast = \int_\Theta \theta d\hat\Pi_{n,g}(\theta)$ is the mean of $\hat\Pi_{n,g}$. 
In other words, this shows that the M-posterior mean is the ``robust'' estimator of $\theta_0$.

\subsection{Bayesian inference based on stochastic approximation of the posterior distribution}
\label{sec:approx}

As we have already mentioned in section \ref{sec:construct}, 
when the number of disjoint subgroups $m$ is large, the resulting M-Posterior distribution is ``too flat'', which results in large credible sets and overestimation of uncertainty. 
Clearly, the source of the problem is the fact that each individual random measure $\Pi_{|G_j|}(\cdot|G_j), \ j=1,\ldots, m$ is based on sample of size $l\simeq \frac{n}{m}$ which can be much smaller  than $n$. 

One way to reduce the variance of each subset posterior distribution is to repeat each observation in $G_j$ $m$ times (although other alternatives, such as bootstrap, are possible), $\tilde G_j=\underbrace{\{G_j,\ldots,G_j\}}_{m \text{ times}}$.
Formal application of the Bayes rule in this situation yields a collection of new measures on the parameter space:
\[
\Pi_{|G_j|,m}(B|G_j) := \frac{\int\limits_B\left(\prod_{i\in G_j} p_\theta(X_i)\right)^m d\Pi(\theta)}
{\int\limits_{\Theta}\left(\prod_{i\in G_j} p_\theta(X_i)\right)^m d\Pi(\theta)},
\]
where we have assumed that $p_\theta(\cdot)$ is integrable. 
Here, $\left(\prod_{i\in G_j} p_\theta(X_i)\right)^m$ can be viewed as an approximation of the full data likelihood.
We call the random measure $\Pi_{|G_j|,m}(\cdot|G_j)$ the \textit{$j$-th stochastic approximation} to the full posterior distribution. 

Of course, such a ``correction'' negatively affects coverage properties of the credible sets associated with each measure $\Pi_{|G_j|}(\cdot|G_j)$. 
However, taking the median of stochastic approximations yields improved coverage of the resulting M-posterior distribution. 
The main goal of this section is to establish an asymptotic statement in spirit of a Bernstein-von Mises theorem for the M-posterior based on stochastic approximations $\Pi_{|G_j|,m}(B|G_j), \ j=1,\ldots, m$. 


We will start by showing that under certain assumptions the upper bounds for the convergence rates of 
$\Pi_{|G_j|,m}(\cdot|G_j)$ towards $\delta_0$ are the same as for $\Pi_{|G_j|}(\cdot|G_j)$, the ``standard'' posterior distribution given $G_j$.

For $A\subseteq \Theta$, let $N_{[\,]}(u,A,d)$ be the bracketing number of $\{p_\theta, \ \theta\in A\}$ with respect to the distance 
$d(l,u):=\int\limits_{\mb R^D}\l(\sqrt{l(x)}-\sqrt{u(x)}\r)^2 dx$,
and let 
\[
H_{[\,]}(u; A):=\log N_{[\,]}(u,A,d)
\] 
be the \textit{bracketing entropy}. 
In what follows, $B(\theta_0,r):=\{\theta\in \Theta: \ h(P_\theta, P_{\theta_0})\leq r\}$ 
denotes the ``Hellinger ball'' of radius $r$ centered at $\theta_0$.

\begin{theorem}[\cite{wong1995probability}, Theorem 1]
\label{th:wong}
There exist constants $c_j, \ j=1,\ldots ,4$ and $\zeta>0$ such that if
\[
\int\limits_{\zeta^2/2^8}^{\sqrt 2 \zeta}H_{[\,]}^{1/2}\l(u/c_3;B(\theta_0,\zeta\sqrt{2})\r)du\leq c_4 \sqrt l \zeta^2,
\]
then
\[
P\left(\sup\limits_{\theta: h(P_\theta,P_0)\geq\zeta}\prod\limits_{j=1}^l \frac{p_\theta}{p_0}(X_j)\geq e^{-c_1 l \zeta^2}\right)\leq 4e^{-c_2 l\zeta^2}.
\]
In particular, one can choose $c_1=1/24, \ c_2=(4/27)(1/1926), \ c_3=10$ and $c_4=(2/3)^{5/2}/512$.
\end{theorem}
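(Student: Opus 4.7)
The plan is to use a Le Cam style argument that combines three ingredients: (a) the basic moment identity $E_{P_0}[(p_\theta/p_0)^{1/2}(X)] = 1 - h^2(P_\theta,P_0)$, (b) a peeling of $\{\theta : h(P_\theta,P_0)\geq \zeta\}$ into Hellinger shells, and (c) a chaining argument driven by the bracketing entropy integral $\int H_{[\,]}^{1/2}(u/c_3)\,du$. The idea is that the entropy hypothesis allows one to discretize the supremum at scale $\zeta$ with total metric complexity small enough that a union bound together with a pointwise sub-Gaussian-type tail yields the claimed bound.

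First, I would prove a pointwise bound. For any fixed $\theta$, the moment identity gives
\[
E_{P_0}\prod_{j=1}^l \left(\frac{p_\theta}{p_0}(X_j)\right)^{1/2} = (1-h^2(P_\theta,P_0))^l \leq e^{-l\,h^2(P_\theta,P_0)},
\]
so Markov's inequality applied to $\prod (p_\theta/p_0)^{1/2}$ implies
\[
P\!\left(\prod_{j=1}^l \frac{p_\theta}{p_0}(X_j) \geq e^{-c_1 l \zeta^2}\right) \leq e^{-l h^2 + c_1 l \zeta^2/2}.
\]
When $h(P_\theta,P_0)\geq \zeta$ and $c_1$ is small, this gives an exponential tail of the right order for a single $\theta$.

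Next, I would peel the parameter set into shells $S_k = \{\theta : 2^{k-1}\zeta \leq h(P_\theta,P_0) \leq 2^k \zeta\}$ for $k \geq 1$, since on each shell the signal strength is essentially constant. Within each shell I would cover $\{p_\theta : \theta \in S_k\}$ with brackets $[l_i, u_i]$ of Hellinger size $\eps_k$, with $\eps_k$ chosen proportional to $2^{k-1}\zeta/c_3$, and replace $p_\theta$ by the upper bracket $u_i$ inside the supremum. The key technical step is to show that for a suitable modified upper function $\bar u_i$ one has $E_{P_0}[(\bar u_i/p_0)^{1/2}] \leq 1 - c \cdot (2^k\zeta)^2 + O(\eps_k)$, so that the bracket contributes the same kind of sub-Gaussian bound as a single $\theta$, up to a controllable error from $d(l_i,u_i) \leq \eps_k$. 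A union bound over the $N_{[\,]}(\eps_k,\Theta,\rho)$ brackets and over the shells, using the entropy integral hypothesis to sum $H_{[\,]}^{1/2}(\eps_k)\,\eps_k$ against the factors $4^k \zeta^2$ from the shells, then produces the final estimate with the stated constants $c_1, c_2$; a chaining step (not just a single covering) is needed so that the accumulated entropy cost is at most $c_4 \sqrt{l}\,\zeta^2$, which is exactly the content of the entropy integral assumption.

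The main obstacle is the technical bookkeeping with brackets, which are not themselves densities. The upper bracket $u_i$ need not integrate to one, so the clean identity $E_{P_0}[(p_\theta/p_0)^{1/2}] = 1 - h^2$ no longer applies verbatim and must be replaced by an inequality that absorbs a $d(l_i,u_i)$ defect into an error comparable to $\eps_k^2 l$. Making this defect fit into the shell scale (so that it does not overwhelm the $-l (2^{k-1}\zeta)^2$ gain) is what forces the specific constant $c_3$ inside the entropy integral and the upper limit $\sqrt 2\,\zeta$. Once this bracket surgery is done carefully, the peeling/chaining combination and the union bound are routine, and the explicit constants $c_1=1/24$, $c_2=(4/27)(1/1926)$, $c_3=10$, $c_4=(2/3)^{5/2}/512$ come out of optimizing the free parameters (the factor in $\eps_k$, the exponent in the Markov step, and the thickness of the shells).
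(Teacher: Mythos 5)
The paper does not actually prove this statement: it is imported verbatim, with its explicit constants, from Theorem 1 of Wong and Shen (1995) and is used as a black box in the proof of Theorem 3.14. So there is no in-paper argument to compare yours against; the relevant comparison is with the original source, whose strategy your sketch does correctly identify --- the identity $\mb E_{P_0}(p_\theta/p_0)^{1/2}=1-h^2(P_\theta,P_0)$, Markov's inequality applied to the square-root likelihood ratio, peeling into Hellinger shells, and a bracketing-entropy chaining bound.

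As written, however, your outline has a genuine quantitative gap at exactly the point you flag as ``the main obstacle.'' Covering the shell $S_k$ by brackets of Hellinger size $\eps_k\propto 2^{k-1}\zeta/c_3$ cannot work on its own: by Cauchy--Schwarz the bracket defect enters the moment bound \emph{linearly}, $\int\sqrt{u_i\,p_0}\leq 1-h^2(P_\theta,P_0)+d(l_i,u_i)$, so a defect of order $2^{k}\zeta$ swamps the signal $-l\,(2^{k-1}\zeta)^2$ whenever $\zeta$ is small, which is precisely the regime in which the theorem is applied (with $\zeta=\eps_l\to 0$). To make the defect subordinate to the quadratic signal one must refine down to brackets of size of order $\zeta^2$; this is why the entropy integral in the hypothesis has lower limit $\zeta^2/2^8$ rather than a constant multiple of $\zeta$, and it is the accumulated union-bound cost over all scales between $\zeta^2/2^8$ and $\sqrt 2\,\zeta$ that the condition $\int H_{[\,]}^{1/2}(u/c_3)\,du\leq c_4\sqrt l\,\zeta^2$ is designed to control. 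You acknowledge at the end that ``a chaining step is needed,'' but the sketch supplies neither the increment bounds for differences of square-root likelihood ratios between successive bracket approximations nor the exponential inequality applied at each scale, and the explicit constants $c_1,\ldots,c_4$ do not fall out of routine optimization of free parameters --- they are the output of that multi-scale bookkeeping. If you intend to use the result, cite Wong and Shen as the paper does; if you intend to reprove it, the chaining core still has to be written out.
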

In ``typical'' parametric problems ($\Theta\subseteq \mb R^p$), the bracketing entropy can be bounded as 
$H_{[\,]}(u;B(\theta_0,r))\leq C_1\log(C_2r/u)$, 
whence the minimal $\zeta$ that satisfies conditions of Theorem \ref{th:wong} is of order $\zeta \simeq\sqrt{\frac{1}{l}}$. 
In particular, it is easy to check (e.g., using Theorem 2.7.11 in \cite{Vaart1996Weak-convergenc00}) that this is the case when
\begin{enumerate}[(a)]
\item there exists $r_0>0$ such that
\[
h\l(P_\theta,P_{\theta_0}\r)\geq K_1\|\theta-\theta_0\|_2
\] 
whenever $h\l(P_\theta,P_{\theta_0}\r)\leq r_0$, and
\item 
there exists $\alpha>0$ such that for $\theta_1,\theta_2\in B(\theta_0,r_0)$, 
\[
\l|p_{\theta_1}(x)-p_{\theta_2}(x)\r|\leq F(x)\l\|\theta_1-\theta_2\r\|^{\alpha}_2\] 
with $\int_{\mb R^D}F(x)dx<\infty $.
\end{enumerate}
Application of theorem \ref{th:wong} to the analysis of ``stochastic approximations'' yields the following result. 

\begin{theorem}
\label{thm:dist2} \text{ }\\
Let $\eps_l>0$ be such that conditions of Theorem \ref{th:wong} hold with $\zeta:=\eps_l$, and
\begin{enumerate}
\item[(a)]
for some $C>0$
\begin{align*}
&
\Pi\left(\theta: \ -P_0\left(\log\frac{p_\theta}{p_0}\right)\leq \eps_l^2, \ P_0\left(\log\frac{p_\theta}{p_0}\right)^2\leq \eps_l^2\right)
\geq \exp(-Cl\eps_l^2),\\
\end{align*}
\item[(b)] $k$ is a positive-definite kernel that satisfies assumption \ref{equiv} for the Hellinger distance for some 
$\tilde C(\theta_0)$ and $\gamma>0$.
\end{enumerate}
Then there exists $\tilde R=\tilde R(C,\tilde C,\gamma)>0$ such that
\begin{align*}
&
\Pr\left(\l\|\delta_0-\Pi_{l,m}(\cdot|\m X_l)\r\|_{\m F_k}\geq \tilde R\eps^{1/\gamma}_l+e^{-ml\eps_l^2}\right)\leq
\frac{1}{ l\eps_l^2}+4e^{-c_2 \tilde C^2 \tilde R^{2\gamma} l\eps_l^2}.
\end{align*}

\end{theorem}
\begin{proof}
See section \ref{proof:dist2} in the appendix.

\end{proof}
\begin{remark}
Note that for the kernel $k(\cdot,\cdot)$ of the form (\ref{spkernel}), assumption \ref{equiv} reduces to the inequality between the Hellinger and Euclidean distances. 
\end{remark}

As before, Theorem \ref{thm:main} combined with the ``weak concentration'' inequality of Theorem \ref{thm:dist2} gives stronger guarantees for the median $\hat \Pi^{\st}_{n,g}$ (or its alternative $\hat \Pi^{\st}_{n,0}$) of $\Pi_{|G_1|,m}(\cdot|G_1),\ldots,\Pi_{|G_m|,m}(\cdot|G_m)$. 
Exact statement is very similar in spirit to Corollary \ref{cor:med2}.  

Our next goal is to obtain the result describing the asymptotic behavior of the M-posterior distribution $\hat \Pi^{\st}_{n,0}$ in the parametric case.  
We start with a result that addresses each individual stochastic approximation $\Pi_{|G_j|,m}(\cdot|G_j), \ j=1,\ldots,m$. 
Assume that $\Theta\subseteq \mb R^p$ has non-empty interior. 
For $\theta\in \Theta$, let 
\[
I(\theta):=\mb E_{\theta_0} \l[\frac{\partial}{\partial \theta}\log p_\theta(X) \l(\frac{\partial}{\partial \theta}\log p_\theta(X)\r)^T\r]
\]
be the Fisher information matrix (we are assuming that it is well-defined). 
We will say that the family $\{P_\theta, \ \theta\in \Theta\}$ is \textit{differentiable in quadratic mean} (see Chapter 7 in\cite{van2000asymptotic} for details) if there exists 
$\dot\ell_{\theta_0}:\mb R^D\mapsto \mb R^p$ such that 
\[
\int_{\mb R^D}\l(\sqrt{p_{\theta_0+h}}-\sqrt{p_{\theta_0}}-\frac{1}{2}h^T\dot\ell_{\theta_0}\sqrt{p_{\theta_0}}\r)^2=o(\|h\|_2^2)
\]
as $h\to 0$; usually, $\dot\ell_{\theta}(x)=\frac{\partial}{\partial \theta}\log p_\theta(x)$.
Next, define 
\[
\Delta_{l,\theta_0}:=\frac{1}{\sqrt{l}}\sum_{j=1}^l I^{-1}(\theta_0)\dot\ell_{\theta_0}(X_j).
\]
We will first state a preliminary result for each individual ``subset posterior'' distribution:
\begin{proposition}
\label{thm:bvm}
Let $X_1,\ldots,X_l$ be an i.i.d. sample from $P_{\theta_0}$ for some $\theta_0$ in the interior of $\Theta$. 
Assume that 
\begin{enumerate}[(a)]
\item the family $\{P_\theta, \ \theta\in \Theta\}$ is differentiable in quadratic mean;
\item the prior $\Pi$ has a density (with respect to the Lebesgue measure) that is continuous and positive in the neighborhood of $\theta_0$;
\item conditions of Theorem \ref{th:wong} hold with $\zeta=\frac{C}{\sqrt l}$ for some $C>0$ and $l$ large enough. 
\end{enumerate}
Then for any integer $m\geq 1$,
\[
\l\|\Pi_{l,m}(\cdot|X_1,\ldots,X_l) - N\l(\theta_0+\frac{\Delta_{l,\theta_0}}{\sqrt{l}},\frac{1}{l\cdot m}I^{-1}(\theta_0)\r)\r\|_{\rm TV}\to 0 
\]
in $P_{\theta_0}$-probability as $l\to \infty$.
\end{proposition}
\begin{proof}
The proof follows standard steps (e.g., Theorem 10.1 in \cite{van2000asymptotic}), where the existence of tests is substituted by the inequality of Theorem \ref{th:wong}. 
See section \ref{proof:3.13} in the appendix for more details.
\end{proof}
The implication of this result for the M-posterior is the following: if $k$ is the kernel of type \eq{spkernel}, for sufficiently regular parametric families (differentiable in quadratic mean, with ``well-behaved'' bracketing numbers, satisfying assumption \ref{equiv} for the Euclidean distance with $\gamma=1$) and regular priors, then
\begin{enumerate}[(a)]
\item the M-posterior is well approximated by a normal distribution centered at the ``robust'' estimator $\theta^\ast$ of unknown $\theta_0$;
\item the estimator $\theta^\ast$ is a center of the confidence set of level $1.15^{-m}$ and diameter of order $\sqrt{\frac{m}{n}}$ (same as we would expect for this level for the usual posterior distribution - however, the bound for the M-posterior holds for finite sample sizes).
\end{enumerate} 
This is formalized below:
\begin{theorem}
\begin{enumerate}[(a)]
\item
Let $k$ be the kernel of type \eq{spkernel}, and suppose that the assumptions of Proposition \ref{thm:bvm} hold. 
Moreover, let the prior $\Pi$ be such that $\int_{\mb R^p} \|\theta\|_2^2 d\Pi(\theta)<\infty$. 
Then for any fixed $m\geq 1$,
\[
\l\|\hat \Pi^{\st}_{n,0} - N\l(\theta^\ast,\frac{1}{n}I^{-1}(\theta_0)\r)\r\|_{\rm TV}\to 0 \text{ as } n\to\infty,
\]
in $P_{\theta_0}$-probability when $n\to\infty$, 
where $\theta^\ast$ is the mean of $\hat\Pi^{\st}_{n,0}$.
\item Assume that conditions (a), (b) of Theorem \ref{thm:dist2} hold with 
\[
\eps_l \gtrsim \frac{1}{\sqrt{l}}\simeq \sqrt{\frac{m}{n}}
\] 
and $\gamma=1$. 
Then for all $n\geq n_0$ and $\bar R$ large enough,
\[
\Pr\l(\l \|\theta^\ast - \theta_0 \r\|_2\geq \bar R\l(\eps_l+e^{-ml\eps_l^2}\r)\r)\leq 1.15^{-m}.
\]
\end{enumerate} 
\end{theorem}
\begin{proof}
(a) It is easy to see that convergence in total variation norm, together with an assumption that the prior distribution satisfies 
\[
\int_{\mb R^p} \|\theta\|_2^2 d\Pi(\theta)<\infty,
\]
implies that the expectations converge in $P_{\theta_0}$-probability as well:
\[
\l\|\int_\Theta \theta \l(d\Pi_{l,m}(\theta|\m X_l) - d N\l(\theta_0+\frac{\Delta_{l,\theta_0}}{\sqrt{l}},\frac{1}{l\cdot m}I^{-1}(\theta_0)\r)(\theta)\r)  \r\|_2\to 0 \text{ as } l\to \infty.
\]
Together with an observation that the total variation distance between $N(\mu_1,\Sigma)$ and $N(\mu_2,\Sigma)$ is bounded by the multiple of $\l \|\mu_1-\mu_2 \r\|_2$, it implies that we can replace $\theta_0+\frac{\Delta_{l,\theta_0}}{\sqrt{l}}$ by the mean 
\[
\bar\theta_{l,m}(X_1,\ldots,X_l):=\int_\Theta  \theta d\Pi_{l,m}(\theta|\m X_l),
\]
in other words, the conclusion of Proposition \ref{thm:bvm} can be stated as
\[
\l\|\Pi_{l,m}(\cdot|\m X_l) - N\l(\bar\theta_{l,m},\frac{1}{l\cdot m}I^{-1}(\theta_0)\r)\r\|_{\rm TV}\to 0 \text{ as } l\to \infty,
\]
in $P_{\theta_0}$-probability. 
Now assume that $m=\lfloor\frac{n}{l}\rfloor$ is fixed, and let $n,l\to\infty$. 
As before, let $G_1,\ldots,G_m$ be disjoint groups of i.i.d. observations from $P_{\theta_0}$ of cardinality $l$ each. 
Recall that, by the definition (\ref{eq:nemir0}) of $\med_0(\cdot)$, $\Pi^{\st}_{n,0}=\Pi_{l,m}(\cdot|\m X_{l_\ast})$ for some $l_\ast\leq m$, and 
$\theta^\ast:=\theta_{l_\ast,m}$ is the mean of $\Pi^{\st}_{n,0}$. 
Clearly, we have
\begin{align}
\nonumber
&
\l\| \Pi^{\st}_{n,0} - N\l(\theta^\ast,\frac{1}{l\cdot m}I^{-1}(\theta_0)\r)\r\|_{\rm TV}\leq \\
&
\max_{j=1,\ldots, m}  \l\| \Pi_{l,m}(\cdot|G_j) - N\l(\bar\theta_{l,m}(G_j),\frac{1}{l\cdot m}I^{-1}(\theta_0)\r)\r\|_{\rm TV}\to 0 \text{ as } n\to\infty.
\end{align}
		
(b) 
Let $\eps_l\geq C\sqrt{\frac{1}{l}}$ where $C$ large enough so that 
\[
\frac{1}{ l\eps_l^2}+4e^{-c_2 \tilde C^2 \tilde R^{2} l\eps_l^2} \leq \frac{1}{4},
\]
where $c_2$, $\tilde R$ are the same as in Theorem \ref{thm:dist2}. 

Applying Theorem \ref{thm:dist2}, we get 
\[
\Pr\left(\l\|\delta_{\theta_0}-\Pi_{l,m}(\cdot|\m X_l)\r\|_{\m F_k}\geq \tilde R\eps_l+e^{-ml\eps_l^2}\right)\leq \frac{1}{4}.
\]
By part (b) of Theorem \ref{thm:main},
\[
\l\| \hat \Pi^{\st}_{n,0} - \delta_{\theta_0} \r\|_{\m F_{\tilde k}} \leq 3\l( \tilde R\eps_l+e^{-ml\eps_l^2}\r)
\] 
with probability $\geq 1-1.15^{-m}$. 
Since kernel $\tilde k$ is of the type (\ref{spkernel}), proposition (\ref{prop:mean}) implies that 
\begin{align}
\label{eq:bvm10}
&
\l\|  \theta^\ast - \theta_0 \r\|_2 \leq \l\| \hat \Pi^{\st}_{n,0} - \delta_{\theta_0} \r\|_{\m F_{\tilde k}},
\end{align}
and the result follows.

\end{proof}

In particular, for $m=A\log(n)$ and $\eps_l\simeq\sqrt{\frac{m}{n}}$, we obtain the bound
\[
\Pr\l(\|\theta^\ast - \theta_0\|_2\geq \bar R \sqrt{\frac{A\log n}{n}} \r)\leq n^{-A}.
\]
for some constant $\bar R$ \textit{independent} of $m$. 
Note that $\theta^\ast$ itself depends on $m$, hence this bound is \textit{not uniform}, and holds only for a given confidence level $1-n^{-A}$. 

It is convenient to interpret this (informally) in terms of the credible sets: to obtain the credible set with ``frequentist'' coverage level $\geq 1-n^{-A}$, pick $m=A\log n$ and use the $(1-n^{-A})$ - credible set of the M-posterior $\hat \Pi^{\st}_{n,0}$.

\section{Numerical algorithms and examples}
\label{sec:ex}

In this section, we consider examples and applications in which comparisons are made for the inference based on the usual posterior distribution and on the M-Posterior. One of the well-known and computationally efficient ways to find the geometric median in Hilbert spaces is the famous \emph{Weiszfeld's algorithm} (introduced in \cite{Weiszfeld1936Sur-un-probleme00}). Details of implementation are described in Algorithms \ref{algo:median} and \ref{algo:mpost}.  Algorithm \ref{algo:median} is a particular case of Weiszfeld's algorithm applied to subset posterior distributions and distance $\|\cdot\|_{\m F_k}$, while Algorithm \ref{algo:mpost} shows how to obtain an approximation to M-Posterior given the samples from $\Pi_{n,m}(\cdot|G_j), \ j=1\ldots m$. Note that the subset posteriors $\Pi_{n,m}(\cdot|G_j)$ whose ``weights'' $w_{\ast,j}$ in the expression of the M-Posterior are small (in our case, smaller than $1/(2m)$) are excluded from the analysis. Our extensive simulations show the empirical evidence in favor of this additional thresholding step. 

Detailed discussion of convergence rates and acceleration techniques for Weiszfeld's method from the viewpoint of modern optimization can be found in \cite{beck2013weiszfeld}. For alternative approaches and extensions of Weiszfeld's algorithm, see \cite{bose2003fast}, \cite{ostresh1978convergence}, \cite{overton1983quadratically}, \cite{chandrasekaran1990algebraic}, \cite{cardot2012recursive}, \cite{cardot2013efficient}, among other works. 

In all numerical simulations below, we use ``stochastic approximations'' and the corresponding median measure $\hat \Pi^{\st}_{n,g}$, unless noted otherwise.

\begin{algorithm}[tb]
\footnotesize
   \caption{Evaluating the geometric median of probability distributions via Weiszfeld's algorithm}
   \label{algo:median}
\begin{algorithmic}
   \STATE {\bfseries Input:} 
   \vskip -0.1in
    \begin{enumerate}
      \itemsep0em 
    \item Discrete measures $Q_1,\ldots,Q_m$;  \vskip -0.1in
    \item The kernel $k(\cdot,\cdot):\mb R^p\times \mb R^p\mapsto \mb R$; 
    \item Threshold $\eps>0$;
    \end{enumerate}
    \STATE \textbf{Initialize:}
    \begin{enumerate}
    \item  Set $w_j^{(0)}:=\frac{1}{m}$, $j=1\ldots m$;   
    \item  Set $Q_\ast^{(0)}:=\frac 1 m\sum\limits_{j=1}^m Q_j$;     
    \end{enumerate}
    \STATE 
  \REPEAT
  \STATE Starting from $t=0$, for each $j = 1, \ldots, m$:
  \begin{enumerate}

  \item  Update 
    $w^{(t+1)}_{j} = \frac{\|Q_\ast^{(t)}-Q_j\|^{-1}_{\m F_k}}{\sum\limits_{i=1}^m \|Q_\ast^{(t)}-Q_i\|^{-1}_{\m F_k}}$; (apply (\ref{eq:discrete}) to evaluate 
$\|Q_\ast^{(t)}-Q_i\|_{\m F_k}$); 
     \item Update $Q_\ast^{(t+1)}=\sum\limits_{j=1}^m w_j^{(t+1)}Q_j$; 
     \end{enumerate}
   \UNTIL{$\|Q_\ast^{(t+1)}-Q_\ast^{(t)}\|_{\m F_k}\leq \eps$};
   \STATE \textbf{Return:} $w_\ast := (w_1^{(t+1)}, \ldots, w_m^{(t+1)})$. 
\end{algorithmic}
\end{algorithm}
\normalsize

\begin{algorithm}[tb]
  \footnotesize
  \caption{Approximating the M-Posterior distribution}
   \label{algo:mpost}
   \begin{algorithmic}
   \STATE {\bfseries Input:} 
    \begin{enumerate}
    \item Samples $\{Z_{j,i}\}_{i=1}^{S_j}\sim \Pi_{n,m}(\cdot|G_j), \ j=1\ldots m$ (see equation (\ref{eq:approx}));
    \end{enumerate}
    \STATE \textbf{Do:}
    \begin{enumerate}
    \item $Q_j:=\frac{1}{S_j}\sum\limits_{i=1}^{S_j}\delta_{Z_{j,i}}$, $j=1\ldots m$ - empirical approximations of $ \Pi_{n,m}(\cdot|G_j)$.
    \item Apply Algorithm \ref{algo:median} to $Q_1,\ldots,Q_m$; return $w_\ast=(w_{\ast,1}\ldots w_{\ast,m})$;
    \item For $j=1,\ldots, m$, set $\bar w_j:=w_{\ast,j}I\{w_{\ast,j}\geq \frac{1}{2m}\}$; define 
    $\hat w_j^\ast:=\bar w_j/\sum_{i=1}^m \bar w_i$.
    \end{enumerate}
  
   \STATE \textbf{Return:} $\hat \Pi_{n,g}^{\st}:=\sum_{i=1}^m \hat w_i^\ast Q_i$.
\end{algorithmic}
\end{algorithm}
\normalsize

Before presenting the results of numerical analysis, let us remark on two important computational aspects. 
\begin{remark}
\label{rmk:automatic_m}
\item[(a)] 
The number of subsets $m$ appears as a ``free parameter'' entering the theoretical guarantees for M-Posterior. 
One interpretation of $m$ (in terms of the credible sets) is given in the end of section \ref{sec:approx}. 
Our results also imply that partitioning the data into $m=2k+1$ subsets guarantees robustness to the presence of $k$ outliers of arbitrary nature. 
 
In many applications, $m$ is dictated by the sample size and computational resources (e.g., the number of available machines). 
In section \ref{section:selection} of the appendix, we describe a heuristic approach to selection of $m$ that shows good practical performance. 
As a rule of a thumb, we recommend choosing $m\lesssim \sqrt n$ as larger values of $m$ lead to an M-posterior that overestimates uncertainty. This heuristic is supported by the numerical results presented below. 

\item[(b)]
It is easy to get a general idea regarding the potential improvement in computational time complexity achieved by the M-Posterior. 
Given the data set $\m X_n=\{X_1,\ldots,X_n\}$ of size $n$, let $t(n)$ be the running time of the algorithm (e.g., MCMC) that outputs a single observation from the posterior distribution $\Pi_n(\cdot|\m X_n)$. If the goal is to obtain $S$ samples from the posterior, then the total running time is $O\l(S\cdot t(n)\r)$. Let us compare this time with the running time needed to obtain $S$ samples from the $M$-posterior given that the algorithm is running on $m$ machines \textit{in parallel}. 
In this case, we need to generate $O\l(S\r)$ samples from each of $m$ subset posteriors, which is done in time $O\l(S \cdot t\l(\frac{n}{m}\r)\r)$, where $S$ is typically large and $m \ll n$. According to Theorem 7.1 in \cite{beck2013weiszfeld}, Weiszfeld's algorithm approximates the M-Posterior to degree of accuracy $\eps$ in at most $O(1/\eps)$ steps, and each of these steps has complexity $O(S^2)$ (which follows from (\ref{eq:discrete})), so that the total running time is 
\begin{align}
  \label{eq:time-comp}
  O\l(S \cdot t\l(\frac{n}{m}\r)+\frac{S^2}{\eps}\r). 
\end{align}
The term $\frac{S^2}{\eps}$ can be refined in several ways via application of more advanced optimization techniques (see the aforementioned references). If, for example, $t(n)\simeq n^r$ for some $r\geq 1$, then $\frac{S}{m}\cdot t\l(\frac{n}{m}\r)\simeq \frac{1}{m^{1+r}}S n^r$ which should be compared to $S \cdot n^r$ required by the standard approach. 

To give a specific example, consider an application of \eqref{eq:time-comp} in the context of Gaussian process (GP) regression. 
If $n$ is the number of training samples, then GP regression
has $O(n^3)+O(S n^2)$ asymptotic time complexity to obtain $S$ samples from the posterior distribution of GP \citep[Algorithm 2.1]{RasWil06}. Assuming we have access to $m$ machines, the time complexity to obtain $S$ samples from M-Posterior in GP regression is $O \left( \left( \tfrac{n}{m} \right)^3  + S \left( \tfrac{n}{m} \right)^2 + \tfrac{S^2}{\eps} \right)$. 
If for example $S = cn$ for some $c > 0$ and $m^2< n\eps$, we get $O(m^2)$ improvement in running time.  


\item[(c)] In many cases, replacing the ``subset posterior'' by the stochastic approximation does not result in increased sampling complexity: indeed, the log-likelihood in the sampling algorithm for the subset posterior is simply multiplied by $m$ to obtain the sampler for the stochastic approximation. 
We have included the description of a modified Dirichlet mixture model in section \ref{section:dirichlet} of the appendix as an illustration. 

\end{remark}

\subsection{Numerical analysis: simulated data}
\label{sec:simul-data-analys}



\begin{figure}[ht]
\begin{center}
\centerline{\includegraphics[scale=0.2]{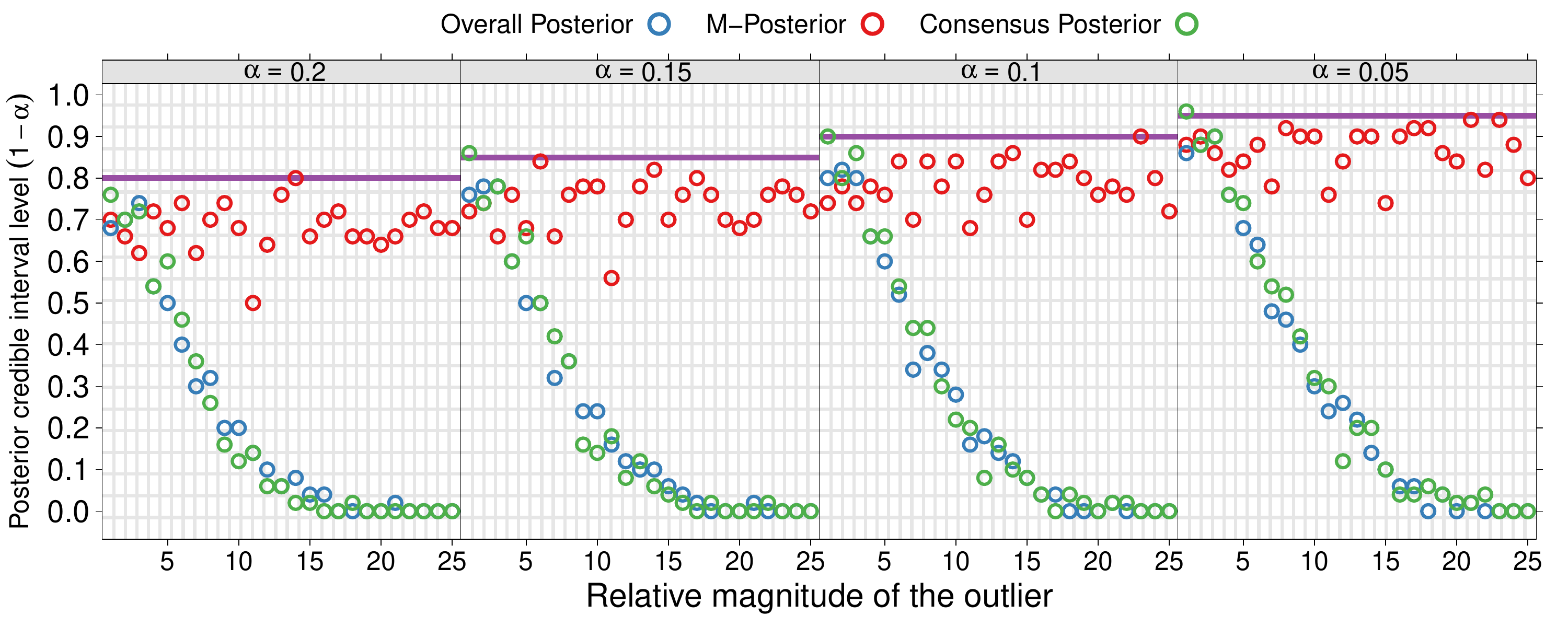}}
\caption{Effect of the outlier on empirical coverage of (1-$\alpha$)100\% credible intervals (CIs). The x-axis represents the outlier magnitude. The y-axis represents the fraction of times the CIs include the true mean over 50 replications. The horizontal lines (in violet) show the theoretical frequentist coverage.}
\label{norm-eg1}
\end{center}
\end{figure} 

This section demonstrates the effect of magnitude of an outlier on the posterior distribution of the mean parameter $\mu$. We empirically show that M-Posterior of $\mu$ is a robust alternative to the overall posterior. To this end, we used the simplest univariate Gaussian model $\{P_\mu=\m N(\mu,1), \ \mu\in \mb R\}$. 
 
We simulated $25$ data sets containing $100$ observations each. Each data set $\xb_i = (x_{i,1}, \ldots, x_{i,100})$ contained 99 independent observations from the standard Gaussian distribution ($x_{i,j} \sim \Ncal(0, 1)$ for $i = 1, \ldots, 25$ and $j = 1, \ldots, 99$). The last entry in each data set $x_{i,100}$ was an outlier, and its value  increased linearly for $i = 1, \ldots, 25$: $x_{i,100} = i \max(|x_{i,1}|, \ldots, |x_{i,99}|)$. The index of outlier was unknown to the algorithm for estimating M-Posterior. We assumed that the variance of observations was known.  We used a flat (Jeffreys) prior on the mean $\mu$ and obtained its posterior distribution, which was also Gaussian with mean $\frac{\sum_{j=1}^{100} x_{ij}}{100}$ and variance $\frac{1}{100}$. We generated 1000 samples from each posterior distribution $\Pi_{100}(\cdot| \xb_i)$ for $i=1, \ldots, 25$. Setting $m=10$ in Algorithm \ref{algo:median}, we generated 1000 samples from every subset posterior $\Pi_{100,10}(\cdot |G_{j,i}),  \ j=1, \ldots, 10$ to form the empirical measures $Q_{j,i}$; here, $\cup_{j=1}^{10}G_{j,i}=\xb_i$.  Using these $Q_{j,i}$s, Algorithm \ref{algo:mpost} generated 1000 samples from the M-Posterior $\hat\Pi_{100,g}^{\sto}(\cdot| \xb_i)$ for each $i=1, \ldots, 25$. This process was replicated 50 times.  
We used Consensus MCMC \citep{Scoetal13} as a representative for scalable MCMC methods, and compared its performance with M-Posterior.    

\begin{figure}[ht]
\begin{center}
\centerline{\includegraphics[scale = 0.2]{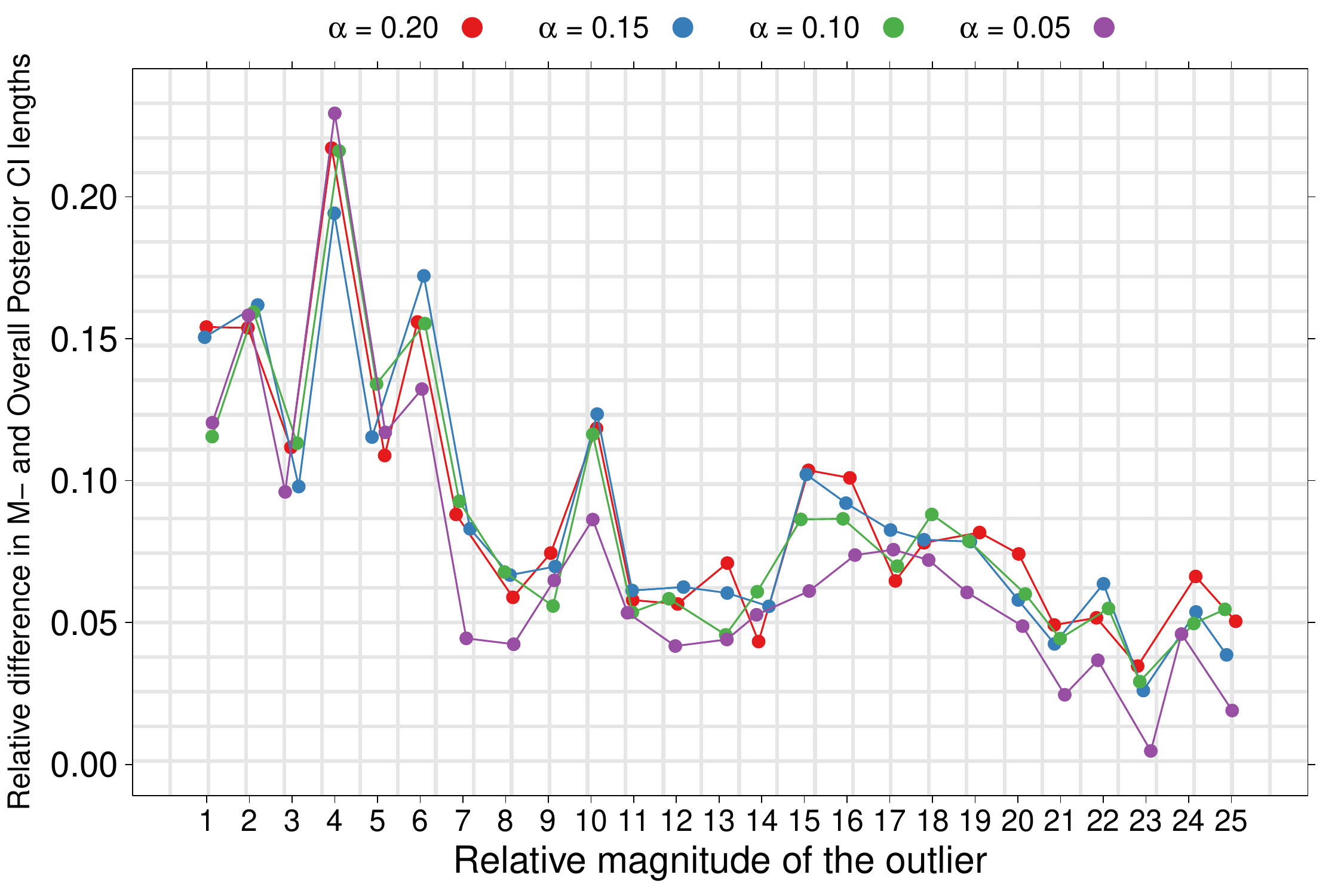}}
\caption{Calibration of uncertainty quantification of M-Posterior. The x-axis represents the outlier magnitude that increases from 1 to 25. 
The y-axis represents the relative difference between M-Posterior and overall posterior CI lengths. A value close to 0 represents that the M-Posterior CIs are well-calibrated.}
\label{fig:comp}
\end{center}
\end{figure} 

\begin{figure}[ht]
\begin{center}
\centerline{\includegraphics[scale=0.2]{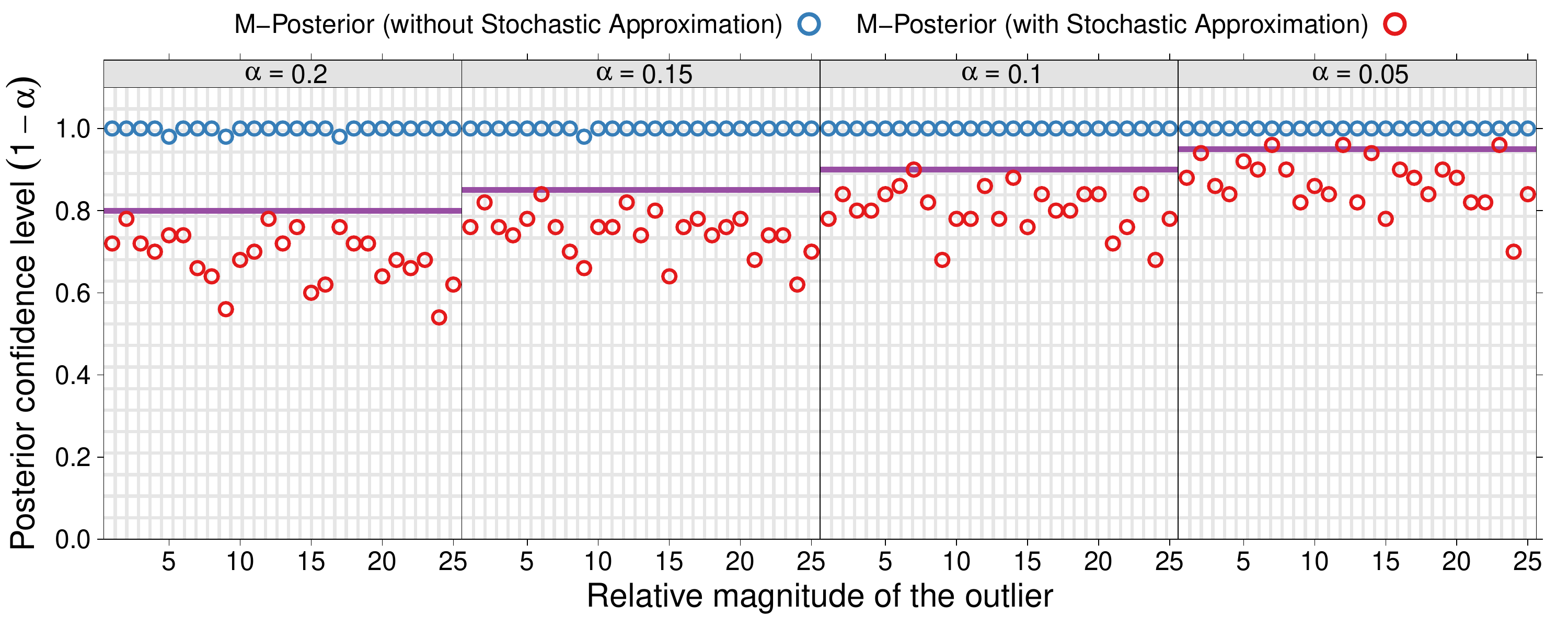}}
\caption{Effect of stochastic approximation on empirical coverage of (1-$\alpha$)100\% CIs. The x-axis represents the outlier magnitude that increases from 1 to 25. The y-axis represents the fraction of times the CIs of M-Posteriors with and without stochastic approximation include the true mean over  50 replications. The horizontal lines (in violet) show the theoretical frequentist coverage.}
\label{sto-cov}
\end{center}
\end{figure}

\begin{figure}[ht]
\begin{center}
  \centerline{\includegraphics[scale=0.2]{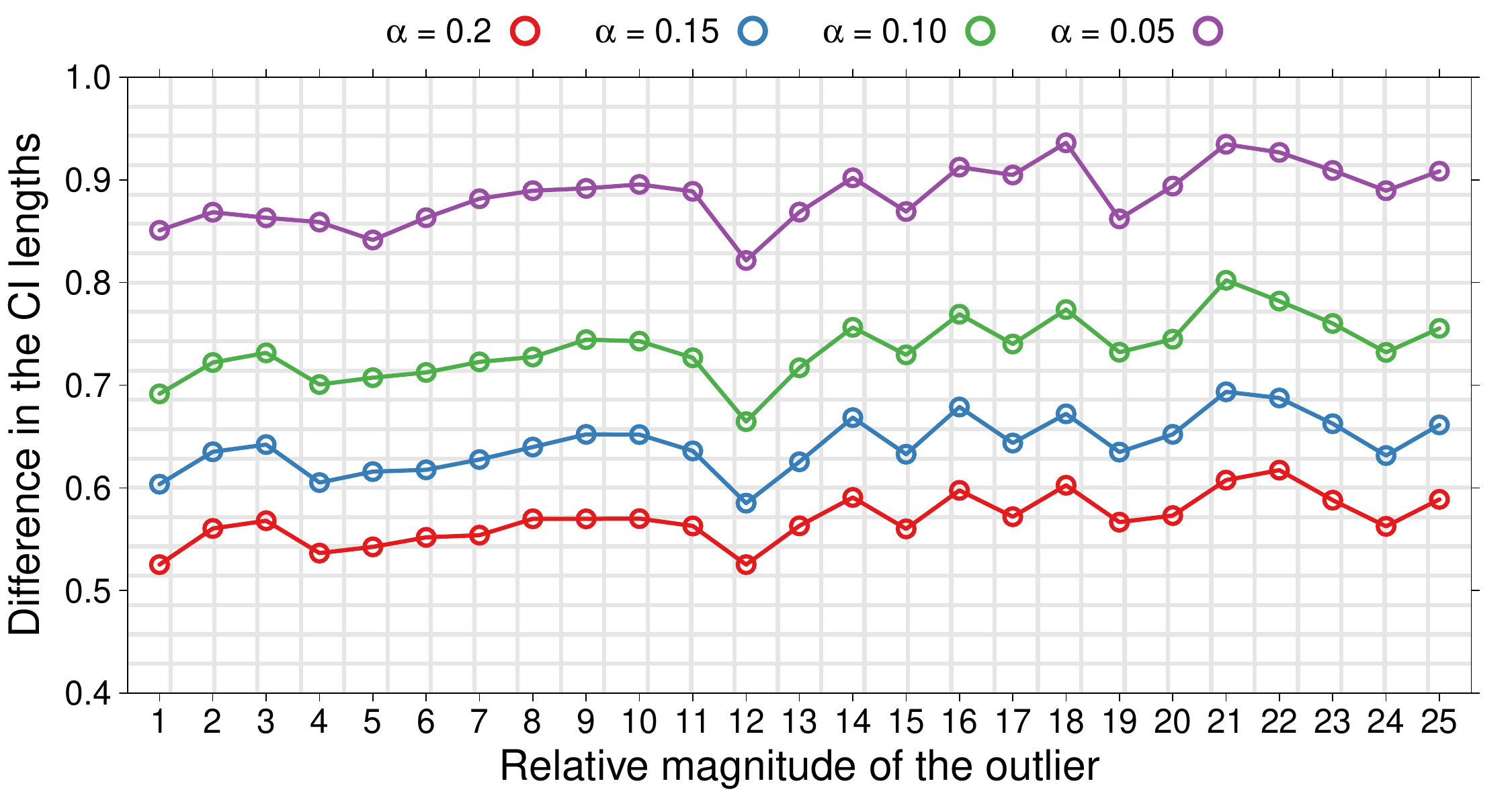}}
  \caption{Effect of stochastic approximation on the length of (1-$\alpha$)100\% CIs. The x-axis represents the outlier magnitude that increases from 1 to 25. The y-axis represents the differences in the lengths of the CIs of M-Posteriors without and with stochastic approximation.}
\label{sto-comp}
\end{center}
\end{figure} 

M-Posterior was more robust than its competitors and its performance improved with increasing magnitude of the outlier.  We compared the performance of ``consensus posterior'', the overall posterior, and the M-Posterior using the empirical coverage of (1-$\alpha$)100\% credible intervals (CIs) calculated across 50 replications for $\alpha = 0.2, 0.15, 0.10$, and 0.05. The empirical coverages of M-Posterior's CIs showed robustness to magnitude of the outlier. 
On the contrary, performance of the consensus and overall posteriors  deteriorated fairly quickly across all $\alpha$'s leading to 0\% empirical coverage as  magnitude of the outlier increased from $i=1$ to $i=25$ (Figure \ref{norm-eg1}). 
We compared uncertainty quantification of the M-Posterior with that of the overall posterior using relative lengths of their CIs, with zero value corresponding to identical lengths and a positive value to wider CIs of the M-Posterior.  
We found that widths of CIs for both posteriors were fairly similar for $i=1, \ldots, 25$, with M-Posterior's CIs being slightly wider in absence of large outliers (Figure \ref{fig:comp}). 

Stochastic approximation was important for proper calibration of uncertainty quantification. 
The empirical coverages of (1-$\alpha$)100\% CIs of the M-Posterior without stochastic approximation overcompensated for uncertainty at all levels of $\alpha$ (Figure \ref{sto-cov}). Similarly, lengths of the CIs of M-Posterior without stochastic approximation are wider than those with stochastic approximation (Figure \ref{sto-comp}). Both these observations showed that stochastic approximation led to shorter CIs for M-Posterior that had empirical coverages close to their theoretical values.

\begin{figure}[t]
  \centering
  \subfloat[Sensitivity to the choice of $m$]{
    \includegraphics[scale=0.175]{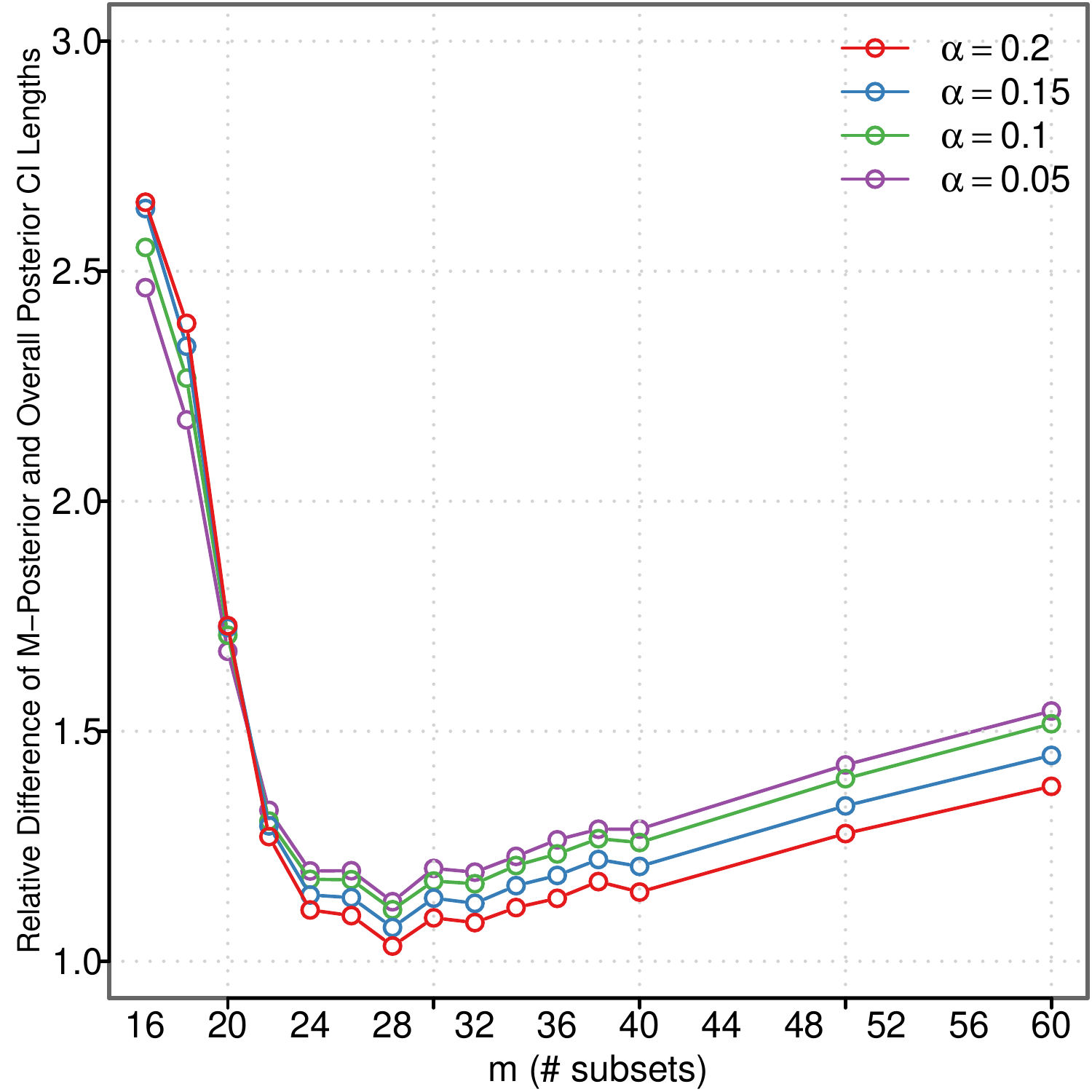}
    \label{m-cilen-sens}}\hspace{1in}
  \subfloat[Time]{
    \includegraphics[scale=0.22]{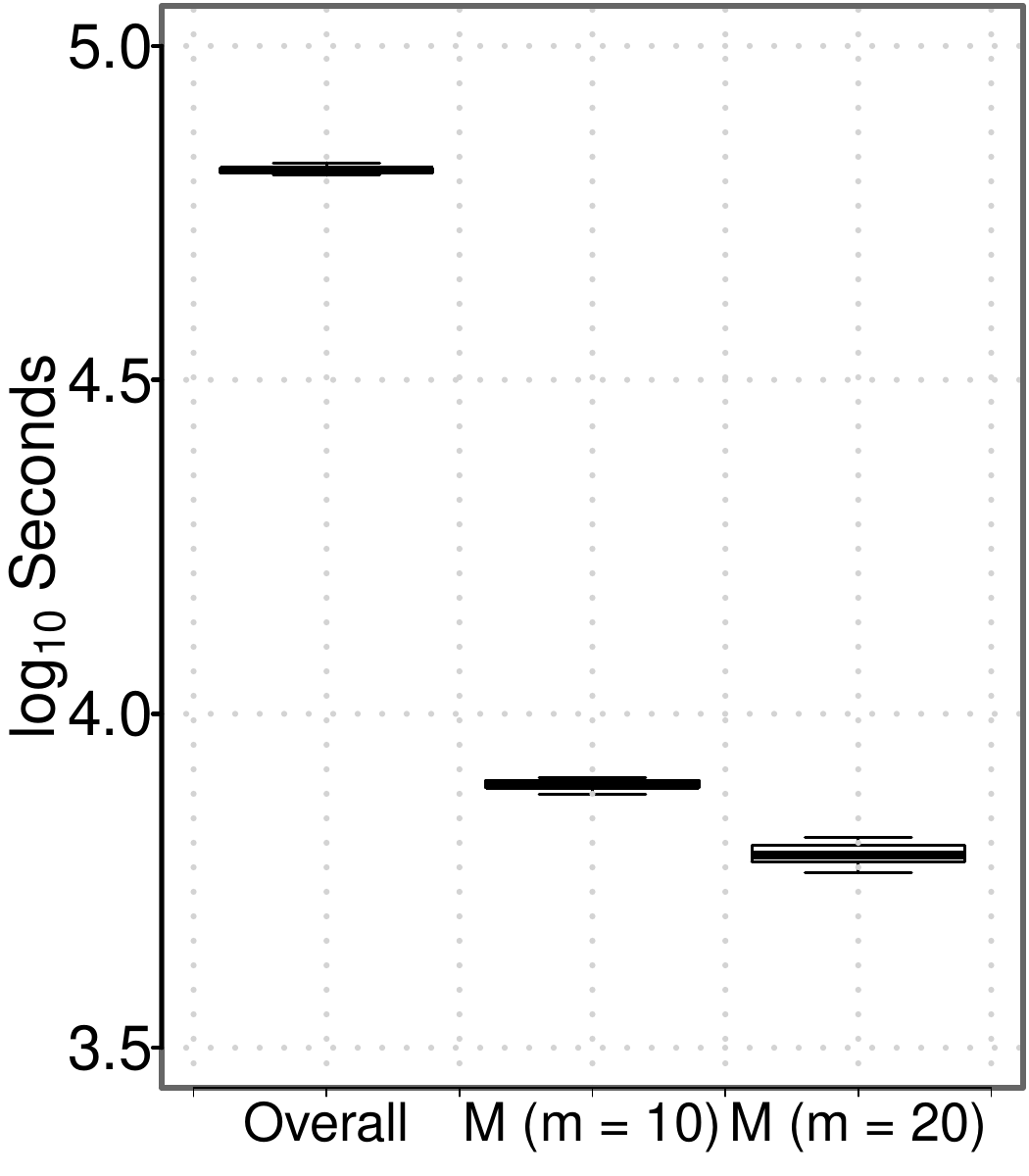}
    \label{gss-time}}
  \caption{(a) Effect of $m$ on the length of (1-$\alpha$)100\% CIs. The x-axis represents different choices of $m$. The y-axis represents the relative difference between M-Posterior and overall posterior CI lengths (median across all 25 outlier magnitudes and 50 replications).  (b) Computation time to estimate overall posterior and M-Posterior (M) with $m=10$ and $m=20$ in real data analysis.}
\end{figure}

The number of subsets ($m$) had an effect on credible interval length of the M-Posterior. We modified the simulation above and generated 1000 observations $\xb$, with the last 10 observations in $\xb$ being outliers with value $x_{j} = 25 \max(|x_{1}|, \ldots, |x_{990}|)$, $j=991, \ldots, 1000$. The simulation setup was replicated 50 times. 
M-Posteriors were obtained for $m = 16, 18, \ldots, 40, 50, 60$. Across all values of $m$, M-Posterior's CI was compared to the CI of the overall posterior after removing the outliers; 
the relative difference of M-Posterior and the overall posterior CI lengths decreases for $m\geq 22>2k$, where $k=10$ is the number of outliers, remains stable as $m$ increases to $m=38$, and grows for larger values of $m$ (Figure \ref{m-cilen-sens}). 
This demonstrates that inference based on M-Posterior was not too sensitive to the choice of $m$ for a wide range of values.

\subsection{Real data analysis: General social survey}
\label{sec:real-data-analysis}

The General Social Survey (GSS; \url{gss.norc.org}) has collected responses to questions about evolution of American society since 1972. We selected data for 9 questions from different social topics: ``happy'' (happy), ``Bible is a word of God'' (bible), ``support capital punishment'' (cap), ``support legalization of marijuana'' (grass), ``support premarital sex'' (pre), ``approve bible prayer in public schools'' (prayer), ``expect US to be in world war in 10 years'' (uswar), ``approve homosexual sex relations'' (homo), ``support abortion'' (abort). 
These questions were in the survey since 1988 and their answers were converted to two levels: \emph{yes} or \emph{no}. Missing data were imputed based on the average, resulting in a data set with approximately 28,000 respondents. 

We use a Dirichlet process (DP) mixture of product multinomial distributions, probabilistic parafac (p-parafac), to model the multivariate dependence among the 9 questions. Let $c_k \in \{\mathrm{yes}, \mathrm{no} \}$ represent the response to $k$th question, $k = 1, \ldots, 9$, then  $\pib_{c_1, \ldots, c_9}$ is the joint probability of response $c = (c_1, \ldots, c_9) $ for the 9 questions. Using $\pib_{c_1, \ldots, c_9}$, we estimated the joint probability of response to two questions $\pib_{c_i, c_j}$ for every $i$ and $j$ in $\{1, \ldots, 9\}$; see section \ref{app:parafac} of the appendix for the description of p-parafac generative model and sampling algorithms. 
The GSS data were randomly split into 10 test and training data sets. Samples from the overall posteriors of $\pib_{c_i, c_j}$s were obtained using the Gibbs sampling algorithm of \citet{DunXin09}. We chose $m $ as 10 and 20 and estimated  M-Posteriors for $\pib_{c_i, c_j}$s in four steps: training data were randomly split into $m$ subsets, samples from subset posteriors were obtained after modifying the original sampler using stochastic approximation, weights of subsets posteriors were estimated using Algorithm \ref{algo:mpost}, and atoms with estimated weights below $\tfrac{1}{2m}$ were removed.

M-Posterior had similar uncertainty quantification as the overall posterior while being more efficient. M-Posterior was at least 10 ($m = 20$) and 8 times ($m=10$) faster than the overall posterior and it used less than 25\% of the memory resources required by the overall posterior (Figure \ref{gss-time}). The overall posterior was more concentrated than the M-Posterior for $m=10$ and 20; however, its coverage of maximum likelihood estimators for $\pib_{c_i, c_j}$ obtained from the test data was worse than that of the two M-Posteriors (Table \ref{tab:mats}).

\begin{table}[ht]
  \caption{Empirical coverage and lengths of (1-$\alpha$)100\% credible intervals. The results are averaged across all joint probabilities $\pib_{c_i, c_j}$s and 10 folds of cross-validation. Monte Carlo errors are in the parentheses.} 
  \begin{tabular}{| r | c | c | c | c |}
    \hline
    \multicolumn{5}{|c|}{Empirical Coverage of (1-$\alpha$)100\% Credible Intervals}\\
    \hline
    $\alpha$ & $0.05$ & $0.10$ & $0.15$ &  $0.20$ \\ 
    \hline
    Overall Posterior     & 0.56 (0.10) & 0.52 (0.09) & 0.49 (0.09) & 0.46 (0.09) \\
    M-posterior ($m=10$)  & 0.89 (0.06) & 0.85 (0.07) & 0.82 (0.08) & 0.8 (0.08) \\
    M-posterior ($m=20$)  & 0.97 (0.03) & 0.94 (0.05) & 0.92 (0.05) & 0.91 (0.05) \\
    \hline
    \hline
    \multicolumn{5}{|c|}{Length of (1-$\alpha$)100\% Credible Intervals (in $10^{-2}$)}\\
    \hline
    $\alpha$ & $0.05$ & $0.10$ & $0.15$ & $0.20$ \\ 
    \hline
    Overall Posterior  & 1.2 (0.12) & 1.08 (0.11) & 1 (0.1) & 0.95 (0.1)\\
    M-posterior ($m=10$) & 2.75 (0.3) & 2.49 (0.27) & 2.31 (0.25) & 2.18 (0.23) \\
    M-posterior ($m=20$) & 3.71 (0.44) & 3.33 (0.4) & 3.09 (0.37) & 2.91 (0.35) \\
    \hline
  \end{tabular}
  \label{tab:mats}
\end{table}

\nocite{Huber2009Robust-statisti00}
\bibliographystyle{imsart-nameyear}
\bibliography{bibliography1,bibliography2,papers}

\begin{thebibliography}{56}

\bibitem[\protect\citeauthoryear{Agarwal and Duchi}{2012}]{AgaDuc12}
\begin{binproceedings}[author]
\bauthor{\bsnm{Agarwal},~\bfnm{Alekh}\binits{A.}} \AND
  \bauthor{\bsnm{Duchi},~\bfnm{John~C}\binits{J.~C.}}
(\byear{2012}).
\btitle{Distributed delayed stochastic optimization}.
In \bbooktitle{2012 IEEE 51st Annual Conference on Decision and Control (CDC)}
\bpages{5451--5452}.
\end{binproceedings}
\endbibitem

\bibitem[\protect\citeauthoryear{Ahn, Korattikara and
  Welling}{2012}]{AhnKorWel12}
\begin{barticle}[author]
\bauthor{\bsnm{Ahn},~\bfnm{Sungjin}\binits{S.}},
  \bauthor{\bsnm{Korattikara},~\bfnm{Anoop}\binits{A.}} \AND
  \bauthor{\bsnm{Welling},~\bfnm{Max}\binits{M.}}
(\byear{2012}).
\btitle{Bayesian posterior sampling via stochastic gradient {F}isher scoring}.
\bjournal{Proceedings of the 29th International Conference on Machine Learning
  (ICML-12)}.
\end{barticle}
\endbibitem

\bibitem[\protect\citeauthoryear{Alon, Matias and
  Szegedy}{1996}]{alon1996space}
\begin{binproceedings}[author]
\bauthor{\bsnm{Alon},~\bfnm{N.}\binits{N.}},
  \bauthor{\bsnm{Matias},~\bfnm{Y.}\binits{Y.}} \AND
  \bauthor{\bsnm{Szegedy},~\bfnm{M.}\binits{M.}}
(\byear{1996}).
\btitle{The space complexity of approximating the frequency moments}.
In \bbooktitle{Proceedings of the twenty-eighth annual ACM symposium on Theory
  of computing}
\bpages{20--29}.
\bpublisher{ACM}.
\end{binproceedings}
\endbibitem

\bibitem[\protect\citeauthoryear{Aronszajn}{1950}]{Aronszajn1950Theory-of-repro00}
\begin{barticle}[author]
\bauthor{\bsnm{Aronszajn},~\bfnm{Nachman}\binits{N.}}
(\byear{1950}).
\btitle{Theory of reproducing kernels}.
\bjournal{Transactions of the American mathematical society}
\bvolume{68}
\bpages{337--404}.
\end{barticle}
\endbibitem

\bibitem[\protect\citeauthoryear{Bayarri and
  Berger}{1994}]{Bayarri1994Robust-Bayesian00}
\begin{barticle}[author]
\bauthor{\bsnm{Bayarri},~\bfnm{MJ}\binits{M.}} \AND
  \bauthor{\bsnm{Berger},~\bfnm{James~O}\binits{J.~O.}}
(\byear{1994}).
\btitle{Robust {B}ayesian bounds for outlier detection}.
\bjournal{Recent Advances in Statistics and Probability}
\bpages{175--190}.
\end{barticle}
\endbibitem

\bibitem[\protect\citeauthoryear{Beck and Sabach}{2013}]{beck2013weiszfeld}
\begin{barticle}[author]
\bauthor{\bsnm{Beck},~\bfnm{Amir}\binits{A.}} \AND
  \bauthor{\bsnm{Sabach},~\bfnm{Shoham}\binits{S.}}
(\byear{2013}).
\btitle{Weiszfeld's method: old and new results}.
\bjournal{Preprint}.
\bnote{Available at
  \url{https://iew3.technion.ac.il/Home/Users/becka/Weiszfeld_review-v3.pdf}}.
\end{barticle}
\endbibitem

\bibitem[\protect\citeauthoryear{Berger}{1994}]{Berger1994An-overview-of-00}
\begin{barticle}[author]
\bauthor{\bsnm{Berger},~\bfnm{James~O.}\binits{J.~O.}}
(\byear{1994}).
\btitle{An overview of robust {B}ayesian analysis}.
\bjournal{Test}
\bvolume{3}
\bpages{5--124}.
\bnote{With comments and a rejoinder by the author}.
\end{barticle}
\endbibitem

\bibitem[\protect\citeauthoryear{Bishop}{2006}]{Bishop2006Pattern-recogni00}
\begin{bbook}[author]
\bauthor{\bsnm{Bishop},~\bfnm{Christopher~M.}\binits{C.~M.}}
(\byear{2006}).
\btitle{Pattern recognition and machine learning}.
\bpublisher{Springer, New York}.
\end{bbook}
\endbibitem

\bibitem[\protect\citeauthoryear{Boissard and Le~Gouic}{2014}]{boissard2014}
\begin{barticle}[author]
\bauthor{\bsnm{Boissard},~\bfnm{Emmanuel}\binits{E.}} \AND
  \bauthor{\bsnm{Le~Gouic},~\bfnm{Thibaut}\binits{T.}}
(\byear{2014}).
\btitle{On the mean speed of convergence of empirical and occupation measures
  in Wasserstein distance}.
\bjournal{Ann. Inst. H. Poincar{\'e} Probab. Statist.}
\bvolume{50}
\bpages{539--563}.
\end{barticle}
\endbibitem

\bibitem[\protect\citeauthoryear{Bose, Maheshwari and
  Morin}{2003}]{bose2003fast}
\begin{barticle}[author]
\bauthor{\bsnm{Bose},~\bfnm{P.}\binits{P.}},
  \bauthor{\bsnm{Maheshwari},~\bfnm{A.}\binits{A.}} \AND
  \bauthor{\bsnm{Morin},~\bfnm{P.}\binits{P.}}
(\byear{2003}).
\btitle{Fast approximations for sums of distances, clustering, and the
  {F}ermat--{W}eber problem}.
\bjournal{Computational Geometry}
\bvolume{24}
\bpages{135--146}.
\end{barticle}
\endbibitem

\bibitem[\protect\citeauthoryear{Box and Tiao}{1968}]{Box1968A-Bayesian-appr00}
\begin{barticle}[author]
\bauthor{\bsnm{Box},~\bfnm{George~EP}\binits{G.~E.}} \AND
  \bauthor{\bsnm{Tiao},~\bfnm{George~C}\binits{G.~C.}}
(\byear{1968}).
\btitle{A {B}ayesian approach to some outlier problems}.
\bjournal{Biometrika}
\bvolume{55}
\bpages{119--129}.
\end{barticle}
\endbibitem

\bibitem[\protect\citeauthoryear{Boyd et~al.}{2011}]{Boyetal11}
\begin{barticle}[author]
\bauthor{\bsnm{Boyd},~\bfnm{Stephen}\binits{S.}},
  \bauthor{\bsnm{Parikh},~\bfnm{Neal}\binits{N.}},
  \bauthor{\bsnm{Chu},~\bfnm{Eric}\binits{E.}},
  \bauthor{\bsnm{Peleato},~\bfnm{Borja}\binits{B.}} \AND
  \bauthor{\bsnm{Eckstein},~\bfnm{Jonathan}\binits{J.}}
(\byear{2011}).
\btitle{Distributed optimization and statistical learning via the alternating
  direction method of multipliers}.
\bjournal{Foundations and Trends in Machine Learning}
\bvolume{3}
\bpages{1--122}.
\end{barticle}
\endbibitem

\bibitem[\protect\citeauthoryear{Broderick et~al.}{2013}]{Broetal13}
\begin{binproceedings}[author]
\bauthor{\bsnm{Broderick},~\bfnm{Tamara}\binits{T.}},
  \bauthor{\bsnm{Boyd},~\bfnm{Nicholas}\binits{N.}},
  \bauthor{\bsnm{Wibisono},~\bfnm{Andre}\binits{A.}},
  \bauthor{\bsnm{Wilson},~\bfnm{Ashia~C}\binits{A.~C.}} \AND
  \bauthor{\bsnm{Jordan},~\bfnm{Michael}\binits{M.}}
(\byear{2013}).
\btitle{Streaming variational {B}ayes}.
In \bbooktitle{Advances in Neural Information Processing Systems}
\bpages{1727--1735}.
\end{binproceedings}
\endbibitem

\bibitem[\protect\citeauthoryear{Cardot, C{\'e}nac and
  Zitt}{2012}]{cardot2012recursive}
\begin{barticle}[author]
\bauthor{\bsnm{Cardot},~\bfnm{Herv{\'e}}\binits{H.}},
  \bauthor{\bsnm{C{\'e}nac},~\bfnm{Peggy}\binits{P.}} \AND
  \bauthor{\bsnm{Zitt},~\bfnm{Pierre-Andr{\'e}}\binits{P.-A.}}
(\byear{2012}).
\btitle{Recursive estimation of the conditional geometric median in {H}ilbert
  spaces}.
\bjournal{Electronic Journal of Statistics}
\bvolume{6}
\bpages{2535--2562}.
\end{barticle}
\endbibitem

\bibitem[\protect\citeauthoryear{Cardot, C{\'e}nac and
  Zitt}{2013}]{cardot2013efficient}
\begin{barticle}[author]
\bauthor{\bsnm{Cardot},~\bfnm{Herv{\'e}}\binits{H.}},
  \bauthor{\bsnm{C{\'e}nac},~\bfnm{Peggy}\binits{P.}} \AND
  \bauthor{\bsnm{Zitt},~\bfnm{Pierre-Andr{\'e}}\binits{P.-A.}}
(\byear{2013}).
\btitle{Efficient and fast estimation of the geometric median in {H}ilbert
  spaces with an averaged stochastic gradient algorithm}.
\bjournal{Bernoulli}
\bvolume{19}
\bpages{18--43}.
\end{barticle}
\endbibitem

\bibitem[\protect\citeauthoryear{Chandrasekaran and
  Tamir}{1990}]{chandrasekaran1990algebraic}
\begin{barticle}[author]
\bauthor{\bsnm{Chandrasekaran},~\bfnm{R.}\binits{R.}} \AND
  \bauthor{\bsnm{Tamir},~\bfnm{A.}\binits{A.}}
(\byear{1990}).
\btitle{Algebraic optimization: the {F}ermat-{W}eber location problem}.
\bjournal{Mathematical Programming}
\bvolume{46}
\bpages{219--224}.
\end{barticle}
\endbibitem

\bibitem[\protect\citeauthoryear{Dudley}{2002}]{Dudley2002Real-analysis-a00}
\begin{bbook}[author]
\bauthor{\bsnm{Dudley},~\bfnm{Richard~M}\binits{R.~M.}}
(\byear{2002}).
\btitle{Real analysis and probability}
\bvolume{74}.
\bpublisher{Cambridge University Press}.
\end{bbook}
\endbibitem

\bibitem[\protect\citeauthoryear{Dunson and Xing}{2009}]{DunXin09}
\begin{barticle}[author]
\bauthor{\bsnm{Dunson},~\bfnm{David~B}\binits{D.~B.}} \AND
  \bauthor{\bsnm{Xing},~\bfnm{Chuanhua}\binits{C.}}
(\byear{2009}).
\btitle{Nonparametric {B}ayes modeling of multivariate categorical data}.
\bjournal{Journal of the American Statistical Association}
\bvolume{104}
\bpages{1042--1051}.
\end{barticle}
\endbibitem

\bibitem[\protect\citeauthoryear{{Fournier} and
  {Guillin}}{2013}]{2013arXiv1312.2128F}
\begin{barticle}[author]
\bauthor{\bsnm{{Fournier}},~\bfnm{N.}\binits{N.}} \AND
  \bauthor{\bsnm{{Guillin}},~\bfnm{A.}\binits{A.}}
(\byear{2013}).
\btitle{{On the rate of convergence in Wasserstein distance of the empirical
  measure}}.
\bjournal{ArXiv e-prints}.
\end{barticle}
\endbibitem

\bibitem[\protect\citeauthoryear{Ghosal, Ghosh and Van
  Der~Vaart}{2000}]{Ghosal2000Convergence-rat00}
\begin{barticle}[author]
\bauthor{\bsnm{Ghosal},~\bfnm{Subhashis}\binits{S.}},
  \bauthor{\bsnm{Ghosh},~\bfnm{Jayanta~K}\binits{J.~K.}} \AND
  \bauthor{\bsnm{Van Der~Vaart},~\bfnm{Aad~W}\binits{A.~W.}}
(\byear{2000}).
\btitle{Convergence rates of posterior distributions}.
\bjournal{Annals of Statistics}
\bvolume{28}
\bpages{500--531}.
\end{barticle}
\endbibitem

\bibitem[\protect\citeauthoryear{Hoffman et~al.}{2013}]{Hofetal13}
\begin{barticle}[author]
\bauthor{\bsnm{Hoffman},~\bfnm{Matthew~D.}\binits{M.~D.}},
  \bauthor{\bsnm{Blei},~\bfnm{David~M.}\binits{D.~M.}},
  \bauthor{\bsnm{Wang},~\bfnm{Chong}\binits{C.}} \AND
  \bauthor{\bsnm{Paisley},~\bfnm{John}\binits{J.}}
(\byear{2013}).
\btitle{Stochastic Variational Inference}.
\bjournal{Journal of Machine Learning Research}
\bvolume{14}
\bpages{1303-1347}.
\end{barticle}
\endbibitem

\bibitem[\protect\citeauthoryear{Hsu and
  Sabato}{2013}]{Hsu2013Loss-minimizati00}
\begin{barticle}[author]
\bauthor{\bsnm{Hsu},~\bfnm{Daniel}\binits{D.}} \AND
  \bauthor{\bsnm{Sabato},~\bfnm{Sivan}\binits{S.}}
(\byear{2013}).
\btitle{Loss minimization and parameter estimation with heavy tails}.
\bjournal{arXiv preprint arXiv:1307.1827}.
\end{barticle}
\endbibitem

\bibitem[\protect\citeauthoryear{Huber and
  Ronchetti}{2009}]{Huber2009Robust-statisti00}
\begin{bbook}[author]
\bauthor{\bsnm{Huber},~\bfnm{Peter~J.}\binits{P.~J.}} \AND
  \bauthor{\bsnm{Ronchetti},~\bfnm{Elvezio~M.}\binits{E.~M.}}
(\byear{2009}).
\btitle{Robust statistics},
\bedition{second} ed.
\bseries{Wiley Series in Probability and Statistics}.
\bpublisher{John Wiley \& Sons Inc.}
\end{bbook}
\endbibitem

\bibitem[\protect\citeauthoryear{Kemperman}{1987}]{kemperman1987median}
\begin{barticle}[author]
\bauthor{\bsnm{Kemperman},~\bfnm{J.~H.~B.}\binits{J.~H.~B.}}
(\byear{1987}).
\btitle{The median of a finite measure on a {B}anach space}.
\bjournal{Statistical Data Analysis Based on the $L_1$-norm and Related
  Methods, North-Holland, Amesterdam}
\bpages{217--230}.
\end{barticle}
\endbibitem

\bibitem[\protect\citeauthoryear{Korattikara, Chen and
  Welling}{2013}]{KorCheWel13}
\begin{barticle}[author]
\bauthor{\bsnm{Korattikara},~\bfnm{Anoop}\binits{A.}},
  \bauthor{\bsnm{Chen},~\bfnm{Yutian}\binits{Y.}} \AND
  \bauthor{\bsnm{Welling},~\bfnm{Max}\binits{M.}}
(\byear{2013}).
\btitle{Austerity in {MCMC} land: cutting the {M}etropolis-{H}astings budget}.
\bjournal{arXiv preprint arXiv:1304.5299}.
\end{barticle}
\endbibitem

\bibitem[\protect\citeauthoryear{Lerasle and
  Oliveira}{2011}]{lerasle2011robust}
\begin{barticle}[author]
\bauthor{\bsnm{Lerasle},~\bfnm{M.}\binits{M.}} \AND
  \bauthor{\bsnm{Oliveira},~\bfnm{R.~I.}\binits{R.~I.}}
(\byear{2011}).
\btitle{Robust empirical mean estimators}.
\bjournal{arXiv preprint arXiv:1112.3914}.
\end{barticle}
\endbibitem

\bibitem[\protect\citeauthoryear{Lopuhaa and
  Rousseeuw}{1991}]{Lopuhaa1991Breakdown-point00}
\begin{barticle}[author]
\bauthor{\bsnm{Lopuhaa},~\bfnm{Hendrik~P}\binits{H.~P.}} \AND
  \bauthor{\bsnm{Rousseeuw},~\bfnm{Peter~J}\binits{P.~J.}}
(\byear{1991}).
\btitle{Breakdown points of affine equivariant estimators of multivariate
  location and covariance matrices}.
\bjournal{The Annals of Statistics}
\bpages{229--248}.
\end{barticle}
\endbibitem

\bibitem[\protect\citeauthoryear{Minsker}{2015}]{Minsker2013Geometric-Media00}
\begin{barticle}[author]
\bauthor{\bsnm{Minsker},~\bfnm{Stanislav}\binits{S.}}
(\byear{2015}).
\btitle{Geometric median and robust estimation in Banach spaces}.
\bjournal{Bernoulli}
\bvolume{21}
\bpages{2308--2335}.
\end{barticle}
\endbibitem

\bibitem[\protect\citeauthoryear{Neiswanger, Wang and Xing}{2013}]{NeiWanXin13}
\begin{barticle}[author]
\bauthor{\bsnm{Neiswanger},~\bfnm{Willie}\binits{W.}},
  \bauthor{\bsnm{Wang},~\bfnm{Chong}\binits{C.}} \AND
  \bauthor{\bsnm{Xing},~\bfnm{Eric}\binits{E.}}
(\byear{2013}).
\btitle{Asymptotically exact, embarrassingly parallel {MCMC}}.
\bjournal{arXiv preprint arXiv:1311.4780}.
\end{barticle}
\endbibitem

\bibitem[\protect\citeauthoryear{Nemirovski and
  Yudin}{1983}]{Nemirovski1983Problem-complex00}
\begin{bunpublished}[author]
\bauthor{\bsnm{Nemirovski},~\bfnm{A.}\binits{A.}} \AND
  \bauthor{\bsnm{Yudin},~\bfnm{D.}\binits{D.}}
(\byear{1983}).
\btitle{Problem complexity and method efficiency in optimization}.
\end{bunpublished}
\endbibitem

\bibitem[\protect\citeauthoryear{Nielsen and
  Garcia}{2011}]{Nielsen2009Statistical-exp00}
\begin{barticle}[author]
\bauthor{\bsnm{Nielsen},~\bfnm{Frank}\binits{F.}} \AND
  \bauthor{\bsnm{Garcia},~\bfnm{Vincent}\binits{V.}}
(\byear{2011}).
\btitle{Statistical exponential families: {a} digest with flash cards}.
\bjournal{arXiv preprint arXiv:0911.4863}.
\end{barticle}
\endbibitem

\bibitem[\protect\citeauthoryear{Ostresh}{1978}]{ostresh1978convergence}
\begin{barticle}[author]
\bauthor{\bsnm{Ostresh},~\bfnm{L.~M.}\binits{L.~M.}}
(\byear{1978}).
\btitle{On the convergence of a class of iterative methods for solving the
  {W}eber location problem}.
\bjournal{Operations Research}
\bvolume{26}
\bpages{597--609}.
\end{barticle}
\endbibitem

\bibitem[\protect\citeauthoryear{Overton}{1983}]{overton1983quadratically}
\begin{barticle}[author]
\bauthor{\bsnm{Overton},~\bfnm{M.~L.}\binits{M.~L.}}
(\byear{1983}).
\btitle{A quadratically convergent method for minimizing a sum of {E}uclidean
  norms}.
\bjournal{Mathematical Programming}
\bvolume{27}
\bpages{34--63}.
\end{barticle}
\endbibitem

\bibitem[\protect\citeauthoryear{Rasmussen and Williams}{2006}]{RasWil06}
\begin{barticle}[author]
\bauthor{\bsnm{Rasmussen},~\bfnm{Carl~Edward}\binits{C.~E.}} \AND
  \bauthor{\bsnm{Williams},~\bfnm{Christopher~KI}\binits{C.~K.}}
(\byear{2006}).
\btitle{Gaussian processes for machine learning}.
\bjournal{the MIT Press}
\bvolume{2}.
\end{barticle}
\endbibitem

\bibitem[\protect\citeauthoryear{{GroupLens Research}}{2013}]{Gro13}
\begin{barticle}[author]
\bauthor{\bsnm{{GroupLens Research}}}
(\byear{2013}).
\btitle{MovieLens Data Sets}.
\end{barticle}
\endbibitem

\bibitem[\protect\citeauthoryear{Scott et~al.}{2013}]{Scoetal13}
\begin{bmisc}[author]
\bauthor{\bsnm{Scott},~\bfnm{Steven~L}\binits{S.~L.}},
  \bauthor{\bsnm{Blocker},~\bfnm{Alexander~W}\binits{A.~W.}},
  \bauthor{\bsnm{Bonassi},~\bfnm{Fernando~V}\binits{F.~V.}},
  \bauthor{\bsnm{Chipman},~\bfnm{Hugh~A}\binits{H.~A.}},
  \bauthor{\bsnm{George},~\bfnm{Edward~I}\binits{E.~I.}} \AND
  \bauthor{\bsnm{McCulloch},~\bfnm{Robert~E}\binits{R.~E.}}
(\byear{2013}).
\btitle{Bayes and big data: the consensus {M}onte {C}arlo algorithm}.
\end{bmisc}
\endbibitem

\bibitem[\protect\citeauthoryear{Smola and Narayanamurthy}{2010}]{SmoNar10}
\begin{binproceedings}[author]
\bauthor{\bsnm{Smola},~\bfnm{A.~J.}\binits{A.~J.}} \AND
  \bauthor{\bsnm{Narayanamurthy},~\bfnm{S.}\binits{S.}}
(\byear{2010}).
\btitle{{An Architecture for Parallel Topic Models}}.
In \bbooktitle{Very Large Databases (VLDB)}.
\end{binproceedings}
\endbibitem

\bibitem[\protect\citeauthoryear{Sriperumbudur
  et~al.}{2009}]{Sriperumbudur2009On-integral-pro00}
\begin{barticle}[author]
\bauthor{\bsnm{Sriperumbudur},~\bfnm{Bharath~K}\binits{B.~K.}},
  \bauthor{\bsnm{Fukumizu},~\bfnm{Kenji}\binits{K.}},
  \bauthor{\bsnm{Gretton},~\bfnm{Arthur}\binits{A.}},
  \bauthor{\bsnm{Sch{\"o}lkopf},~\bfnm{Bernhard}\binits{B.}} \AND
  \bauthor{\bsnm{Lanckriet},~\bfnm{Gert~RG}\binits{G.~R.}}
(\byear{2009}).
\btitle{On integral probability metrics, $\phi$-divergences and binary
  classification}.
\bjournal{arXiv preprint arXiv:0901.2698}.
\end{barticle}
\endbibitem

\bibitem[\protect\citeauthoryear{Sriperumbudur
  et~al.}{2010}]{Sriperumbudur2010Hilbert-space-e00}
\begin{barticle}[author]
\bauthor{\bsnm{Sriperumbudur},~\bfnm{Bharath~K}\binits{B.~K.}},
  \bauthor{\bsnm{Gretton},~\bfnm{Arthur}\binits{A.}},
  \bauthor{\bsnm{Fukumizu},~\bfnm{Kenji}\binits{K.}},
  \bauthor{\bsnm{Sch{\"o}lkopf},~\bfnm{Bernhard}\binits{B.}} \AND
  \bauthor{\bsnm{Lanckriet},~\bfnm{Gert~RG}\binits{G.~R.}}
(\byear{2010}).
\btitle{Hilbert space embeddings and metrics on probability measures}.
\bjournal{The Journal of Machine Learning Research}
\bvolume{99}
\bpages{1517--1561}.
\end{barticle}
\endbibitem

\bibitem[\protect\citeauthoryear{Svensen and
  Bishop}{2005}]{Svensen2005Robust-Bayesian00}
\begin{barticle}[author]
\bauthor{\bsnm{Svensen},~\bfnm{Markus}\binits{M.}} \AND
  \bauthor{\bsnm{Bishop},~\bfnm{Christopher~M}\binits{C.~M.}}
(\byear{2005}).
\btitle{Robust {B}ayesian mixture modelling}.
\bjournal{Neurocomputing}
\bvolume{64}
\bpages{235--252}.
\end{barticle}
\endbibitem

\bibitem[\protect\citeauthoryear{Van~der Vaart}{2000}]{van2000asymptotic}
\begin{bbook}[author]
\bauthor{\bparticle{Van~der} \bsnm{Vaart},~\bfnm{Aad~W}\binits{A.~W.}}
(\byear{2000}).
\btitle{Asymptotic statistics}.
\bpublisher{Cambridge university press}.
\end{bbook}
\endbibitem

\bibitem[\protect\citeauthoryear{van~der Vaart and
  Wellner}{1996}]{Vaart1996Weak-convergenc00}
\begin{bbook}[author]
\bauthor{\bparticle{van~der} \bsnm{Vaart},~\bfnm{A.~W.}\binits{A.~W.}} \AND
  \bauthor{\bsnm{Wellner},~\bfnm{J.~A.}\binits{J.~A.}}
(\byear{1996}).
\btitle{Weak convergence and empirical processes}.
\bseries{Springer Series in Statistics}.
\bpublisher{Springer-Verlag}, \baddress{New York}.
\end{bbook}
\endbibitem

\bibitem[\protect\citeauthoryear{Wang and Dunson}{2013}]{WanDun13}
\begin{barticle}[author]
\bauthor{\bsnm{Wang},~\bfnm{Xiangyu}\binits{X.}} \AND
  \bauthor{\bsnm{Dunson},~\bfnm{David~B}\binits{D.~B.}}
(\byear{2013}).
\btitle{Parallel {MCMC} via {W}eierstrass sampler}.
\bjournal{arXiv preprint arXiv:1312.4605}.
\end{barticle}
\endbibitem

\bibitem[\protect\citeauthoryear{Wang, Paisley and Blei}{2011}]{Wanetal11}
\begin{binproceedings}[author]
\bauthor{\bsnm{Wang},~\bfnm{Chong}\binits{C.}},
  \bauthor{\bsnm{Paisley},~\bfnm{John~W}\binits{J.~W.}} \AND
  \bauthor{\bsnm{Blei},~\bfnm{David~M}\binits{D.~M.}}
(\byear{2011}).
\btitle{Online variational inference for the hierarchical {D}irichlet process}.
In \bbooktitle{International Conference on Artificial Intelligence and
  Statistics}
\bpages{752--760}.
\end{binproceedings}
\endbibitem

\bibitem[\protect\citeauthoryear{Weiszfeld}{1936}]{Weiszfeld1936Sur-un-probleme00}
\begin{barticle}[author]
\bauthor{\bsnm{Weiszfeld},~\bfnm{E.}\binits{E.}}
(\byear{1936}).
\btitle{Sur un probl{\`e}me de minimum dans l'espace}.
\bjournal{Tohoku Mathematical Journal}.
\end{barticle}
\endbibitem

\bibitem[\protect\citeauthoryear{Welling and Teh}{2011}]{WelTeh11}
\begin{binproceedings}[author]
\bauthor{\bsnm{Welling},~\bfnm{Max}\binits{M.}} \AND
  \bauthor{\bsnm{Teh},~\bfnm{Yee~W}\binits{Y.~W.}}
(\byear{2011}).
\btitle{Bayesian learning via stochastic gradient {L}angevin dynamics}.
In \bbooktitle{Proceedings of the 28th International Conference on Machine
  Learning (ICML-11)}
\bpages{681--688}.
\end{binproceedings}
\endbibitem

\bibitem[\protect\citeauthoryear{Wong et~al.}{1995}]{wong1995probability}
\begin{barticle}[author]
\bauthor{\bsnm{Wong},~\bfnm{Wing~Hung}\binits{W.~H.}},
  \bauthor{\bsnm{Shen},~\bfnm{Xiaotong}\binits{X.}} \betal{et~al.}
(\byear{1995}).
\btitle{Probability Inequalities for Likelihood Ratios and Convergence Rates of
  Sieve MLES}.
\bjournal{The Annals of Statistics}
\bvolume{23}
\bpages{339--362}.
\end{barticle}
\endbibitem

\bibitem[\protect\citeauthoryear{Dunson and Xing}{2009}]{DunXin09}
\begin{barticle}[author]
\bauthor{\bsnm{Dunson},~\bfnm{David~B}\binits{D.~B.}} \AND
  \bauthor{\bsnm{Xing},~\bfnm{Chuanhua}\binits{C.}}
(\byear{2009}).
\btitle{Nonparametric {B}ayes modeling of multivariate categorical data}.
\bjournal{Journal of the American Statistical Association}
\bvolume{104}
\bpages{1042--1051}.
\end{barticle}
\endbibitem

\bibitem[\protect\citeauthoryear{Ghosal, Ghosh and Van
  Der~Vaart}{2000}]{Ghosal2000Convergence-rat00}
\begin{barticle}[author]
\bauthor{\bsnm{Ghosal},~\bfnm{Subhashis}\binits{S.}},
  \bauthor{\bsnm{Ghosh},~\bfnm{Jayanta~K}\binits{J.~K.}} \AND
  \bauthor{\bsnm{Van Der~Vaart},~\bfnm{Aad~W}\binits{A.~W.}}
(\byear{2000}).
\btitle{Convergence rates of posterior distributions}.
\bjournal{Annals of Statistics}
\bvolume{28}
\bpages{500--531}.
\end{barticle}
\endbibitem

\bibitem[\protect\citeauthoryear{Ishwaran and James}{2001}]{IshLan01}
\begin{barticle}[author]
\bauthor{\bsnm{Ishwaran},~\bfnm{Hemant}\binits{H.}} \AND
  \bauthor{\bsnm{James},~\bfnm{Lancelot~F}\binits{L.~F.}}
(\byear{2001}).
\btitle{Gibbs sampling methods for stick-breaking priors}.
\bjournal{Journal of the American Statistical Association}
\bvolume{96}.
\end{barticle}
\endbibitem

\bibitem[\protect\citeauthoryear{Lerasle and
  Oliveira}{2011}]{lerasle2011robust}
\begin{barticle}[author]
\bauthor{\bsnm{Lerasle},~\bfnm{M.}\binits{M.}} \AND
  \bauthor{\bsnm{Oliveira},~\bfnm{R.~I.}\binits{R.~I.}}
(\byear{2011}).
\btitle{Robust empirical mean estimators}.
\bjournal{arXiv preprint arXiv:1112.3914}.
\end{barticle}
\endbibitem

\bibitem[\protect\citeauthoryear{Minsker}{2013}]{Minsker2013Geometric-Media00}
\begin{barticle}[author]
\bauthor{\bsnm{Minsker},~\bfnm{Stanislav}\binits{S.}}
(\byear{2013}).
\btitle{Geometric median and robust estimation in {B}anach spaces}.
\bjournal{arXiv preprint arXiv:1308.1334}.
\end{barticle}
\endbibitem

\bibitem[\protect\citeauthoryear{Sethuraman}{1994}]{sethuraman1994constructive}
\begin{barticle}[author]
\bauthor{\bsnm{Sethuraman},~\bfnm{Jayaram}\binits{J.}}
(\byear{1994}).
\btitle{A constructive definition of {D}irichlet priors}.
\bjournal{Statistica sinica}
\bpages{639--650}.
\end{barticle}
\endbibitem

\bibitem[\protect\citeauthoryear{Srivastava, Li and Dunson}{2015}]{Srietal15b}
\begin{barticle}[author]
\bauthor{\bsnm{Srivastava},~\bfnm{Sanvesh}\binits{S.}},
  \bauthor{\bsnm{Li},~\bfnm{Cheng}\binits{C.}} \AND
  \bauthor{\bsnm{Dunson},~\bfnm{David~B}\binits{D.~B.}}
(\byear{2015}).
\btitle{Scalable Bayes via Barycenter in Wasserstein Space}.
\bjournal{arXiv preprint arXiv:1508.05880}.
\end{barticle}
\endbibitem

\bibitem[\protect\citeauthoryear{Van~der Vaart}{2000}]{van2000asymptotic}
\begin{bbook}[author]
\bauthor{\bparticle{Van~der} \bsnm{Vaart},~\bfnm{Aad~W}\binits{A.~W.}}
(\byear{2000}).
\btitle{Asymptotic statistics}
\bvolume{3}.
\bpublisher{Cambridge university press}.
\end{bbook}
\endbibitem

\bibitem[\protect\citeauthoryear{van~der Vaart and
  Wellner}{1996}]{Vaart1996Weak-convergenc00}
\begin{bbook}[author]
\bauthor{\bparticle{van~der} \bsnm{Vaart},~\bfnm{A.~W.}\binits{A.~W.}} \AND
  \bauthor{\bsnm{Wellner},~\bfnm{J.~A.}\binits{J.~A.}}
(\byear{1996}).
\btitle{Weak convergence and empirical processes}.
\bseries{Springer Series in Statistics}.
\bpublisher{Springer-Verlag}, \baddress{New York}.
\end{bbook}
\endbibitem

\end{thebibliography}

\appendix

\section{Remaining proofs.}
\subsection{Proof of Theorem 2.1}
\label{proof:med}
We will prove a slightly more general result:
\begin{theorem}
\label{thm:main1}
\text{}
\begin{description}
\item[a] Assume that $(\mb H,\|\cdot\|)$ is a Hilbert space and $\theta_0\in \mb H$.
Let $\hat\theta_1,\ldots,\hat\theta_m\in \mb H$ be a collection of independent random variables.
Let the constants $\alpha,q,\gamma$ be such that $0<q<\alpha<1/2$, and $0\leq \gamma<\frac{\alpha-q}{1-q}$.
Suppose $\eps>0$ is such that for all $j, \ 1\leq j\leq \lfloor(1-\gamma)m\rfloor+1$,
\begin{align}
\label{eq:weak_con1}
\Pr\Big(\|\hat\theta_j-\theta_0\|>\eps\Big)\leq q.
\end{align}
Let $\hat\theta_\ast=\med_g(\hat\theta_1,\ldots,\hat\theta_m)$ be the \textit{geometric median} of
$\{\hat\theta_1,\ldots,\hat\theta_m\}$.
Then
\begin{align*}
\Pr\Big(\|\hat\theta_\ast-\theta_0\|> C_\alpha\eps\Big)\leq \left[e^{(1-\gamma)\psi\left(\frac{\alpha-\gamma}{1-\gamma},q\right)}\right]^{-m},
\end{align*}
where $C_\alpha= (1-\alpha)\sqrt{\frac{1}{1-2\alpha}}$.
\item[b]
Assume that $(\mb Y,d)$ is a metric space and $\theta_0\in \mb Y$.
Let $\hat\theta_1,\ldots,\hat\theta_m\in \mb Y$ be a collection of independent random variables.
Let the constants $q,\gamma$ be such that $0<q<\frac 1 2$ and $0\leq \gamma<\frac{1/2-q}{1-q}$.
Suppose $\eps>0$ are such that for all $j,  \ 1\leq j\leq \lfloor(1-\gamma)m\rfloor+1$,
\begin{align}
\label{eq:weak_con2}
\Pr\Big(d(\hat\theta_j,\theta_0)>\eps\Big)\leq q.
\end{align}
Let $\hat\theta_\ast=\med_0(\hat\theta_1,\ldots,\hat\theta_m)$. Then
\begin{align*}
\Pr\Big(d(\hat\theta_\ast,\theta_0)> 3\eps\Big)\leq e^{-m(1-\gamma)\psi\left(\frac{1/2-\gamma}{1-\gamma},q\right)}.
\end{align*}
\end{description}

\end{theorem}
To get the bound stated in the paper, take $q=\frac{1}{7}$ and $\alpha=\frac{3}{7}$ in part \textbf{(a)} and $q=\frac{1}{4}$ in part \textbf{b}.

We start by proving part \textbf{a}.
To this end, we will need the following lemma (see lemma 2.1 in \cite{Minsker2013Geometric-Media00}):
\begin{lemma}
\label{lem:median}
Let $\mb H$ be a Hilbert space, $x_1,\ldots,x_m\in \mb H$ and let $x_\ast$ be their geometric median.
Fix $\alpha\in \l(0,\frac 12 \r)$ and assume that $z\in \mb H$ is such that $\|x_\ast-z\| > C_\alpha r$, where
\[
C_\alpha= (1-\alpha)\sqrt{\frac{1}{1-2\alpha}}
\]
and $r>0$.
Then there exists a subset $J\subseteq \l\{1,\ldots, m\r\}$ of cardinality $|J|>\alpha m$ such that for all $j\in J$, $\|x_j-z\|>r$.
\end{lemma}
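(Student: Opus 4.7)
The plan is to argue by contrapositive: I will assume that fewer than $\alpha m$ of the points $x_j$ lie outside the ball of radius $r$ around $z$, and derive that $\|x_\ast - z\| \le C_\alpha r$. Concretely, set $J := \{j : \|x_j - z\| > r\}$, suppose $|J| \leq \alpha m$, and let $R := \|x_\ast - z\|$ and $v := (x_\ast - z)/R$ (the case $R=0$ being trivial). My target inequality is $R \le C_\alpha r$.

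The main tool is the first-order optimality condition for $x_\ast$ as minimizer of the convex functional $F(y) = \sum_{j=1}^m \|y - x_j\|$. Namely, $0 \in \partial F(x_\ast)$, so there exist vectors $g_j$ with $\|g_j\| \leq 1$, equal to the unit vector $(x_\ast - x_j)/\|x_\ast - x_j\|$ whenever $x_\ast \neq x_j$, such that $\sum_{j=1}^m g_j = 0$. Taking the inner product with $v$ gives the identity $\sum_{j=1}^m \langle g_j, v\rangle = 0$, which I will use to pit the ``good'' indices $j \in J^c$ (where $\|x_j - z\| \leq r$) against the ``bad'' indices $j \in J$.

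The key geometric step is a lower bound on $\langle g_j, v\rangle$ for $j \in J^c$. Writing $x_j = z + w_j$ with $\|w_j\| \leq r$, one has $x_\ast - x_j = Rv - w_j$, so that
\[
\langle g_j, v\rangle = \frac{R - \langle w_j, v\rangle}{\sqrt{(R - \langle w_j, v\rangle)^2 + \|w_j - \langle w_j,v\rangle v\|^2}}.
\]
Minimizing the right-hand side over $w_j$ with $\|w_j\| \leq r$ is a one-parameter calculus problem: after setting $a = \langle w_j, v\rangle$ and using the orthogonality constraint $\|w_j - av\|^2 \le r^2 - a^2$, the extremum occurs at $a = r^2/R$ (assuming $R > r$, which I will verify is consistent with the target), and the minimum value simplifies to $\sqrt{R^2 - r^2}/R$. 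I expect this calculation to be the only real obstacle; everything else is bookkeeping.

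Plugging back into the optimality identity, I bound the contribution of $j \in J^c$ from below by $(1-\alpha) m \cdot \sqrt{R^2 - r^2}/R$ and the contribution of $j \in J$ trivially by $-\alpha m$ (since $\|g_j\| \leq 1$). This gives
\[
0 \;\geq\; (1-\alpha) m \cdot \frac{\sqrt{R^2 - r^2}}{R} \;-\; \alpha m,
\]
which rearranges to $R^2/r^2 \leq (1-\alpha)^2/(1 - 2\alpha) = C_\alpha^2$, i.e.\ $R \leq C_\alpha r$, contradicting the hypothesis $R > C_\alpha r$. The contradiction shows $|J| > \alpha m$, proving the lemma. Finally, the edge case $x_\ast = x_j$ for some $j$ slots into the same argument since $\|g_j\| \leq 1$ still holds, and the case $R \leq r$ is excluded by $R > C_\alpha r \geq r$ for $\alpha \in (0, 1/2)$.
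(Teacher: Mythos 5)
Your proof is correct, and it is essentially the argument behind the result the paper cites (Lemma 2.1 of Minsker, 2013) rather than a new route: the paper itself gives no proof, and the cited one uses the nonnegativity of the one-sided directional derivative of $F(y)=\sum_j\|y-x_j\|$ at $x_\ast$ in the direction of $z$, which is the same first-order optimality information as your condition $0\in\partial F(x_\ast)$, combined with the identical geometric bound $\langle g_j,v\rangle\geq \sqrt{R^2-r^2}/R$ for the points inside the ball. Your computation of the extremum at $a=r^2/R$ and the final rearrangement to $R^2/r^2\leq (1-\alpha)^2/(1-2\alpha)$ check out, and the edge cases ($R\leq r$ excluded since $C_\alpha\geq 1$, and $x_j=x_\ast$ forcing $j\in J$) are handled correctly.
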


Assume that event $\m E:=\l\{\|\hat\theta_\ast-\theta_0\|>C_\alpha\eps\r\}$ occurs.
Lemma \ref{lem:median} implies that there exists a subset $J\subseteq\{1,\ldots,m\}$ of cardinality $|J|\geq\alpha k$ such that
$\|\hat\theta_j-\theta_0\|>\eps$ for all $j\in J$, hence
\begin{align*}
\Pr(\m E)\leq &\Pr\l(\sum_{j=1}^m I\l\{\|\hat\theta_j-\theta_0\|>\eps\r\}>\alpha m \r)\leq \\
&
\Pr\l(\sum_{j=1}^{\lfloor (1-\gamma) m\rfloor+1} I\l\{\|\hat\theta_j-\theta_0\|>\eps\r\}>(\alpha-\gamma) m \frac{\lfloor (1-\gamma) m\rfloor+1}{\lfloor (1-\gamma) m\rfloor+1}\r)\leq \\
&
\Pr\l(\sum_{j=1}^{\lfloor (1-\gamma) m\rfloor+1} I\l\{\|\hat\theta_j-\theta_0\|>\eps\r\}>\frac{\alpha-\gamma}{1-\gamma}\big(\lfloor (1-\gamma) m\rfloor+1\big) \r).
\end{align*}
If $W$ has Binomial distribution $W\sim B(\lfloor(1 - \gamma) m\rfloor+1,q)$, then
\begin{align*}
\Pr\bigg(\sum_{j=1}^{\lfloor (1-\gamma) m\rfloor+1} I\l\{\|\hat\theta_j-\theta_0\|>\eps\r\}&>\frac{\alpha-\gamma}{1-\gamma}\big(\lfloor (1-\gamma) m\rfloor+1\big)\bigg)\leq \\
&
\Pr\l(W>\frac{\alpha-\gamma}{1-\gamma}\big(\lfloor (1-\gamma) m\rfloor+1\big)\r)
\end{align*}
(see Lemma 23 in \cite{lerasle2011robust} for a rigorous proof of this fact).
Chernoff bound (e.g., Proposition A.6.1 in \cite{Vaart1996Weak-convergenc00}), together with an obvious bound $\lfloor (1-\gamma) m\rfloor+1>(1-\gamma)m$, implies that
\[
\Pr\l(W>\frac{\alpha-\gamma}{1-\gamma}\big(\lfloor (1-\gamma) m\rfloor+1\big)\r)\leq
\exp\left(-m(1-\gamma)\psi\l(\frac{\alpha-\gamma}{1-\gamma},q\r)\right).
\]
To establish part \textbf{b}, we proceed as follows: let $\m E_1$ be the event
\[
\m E_1=\{\text{more than a half of events } d(\hat\theta_j,\theta_0)\leq \eps, \ j=1\ldots m \text{ occur}\}.
\]
Assume that $\m E_1$ occurs.
Then we clearly have $\eps_\ast\leq \eps$, where $\eps_\ast$ is defined in equation (2.2) of the paper: indeed, for any
$\theta_{j_1},\ \theta_{j_2}$ such that $d(\hat\theta_{j_i},\theta_0)\leq \eps, \ i=1,2$, triangle inequality gives $d(\theta_{j_1},\theta_{j_2})\leq 2\eps$.
By the definition of $\hat\theta_\ast$, inequality $d(\hat\theta_\ast,\hat\theta_j)\leq 2\eps_\ast\leq 2\eps$ holds for at least a half of $\{\hat\theta_1,\ldots,\hat\theta_m\}$, hence, it holds for some $\hat\theta_{\tilde j}$ with $d(\hat\theta_{\tilde j},\theta_0)\leq \eps$.
In turn, this implies (by triangle inequality)
$d(\hat\theta_\ast,\theta_0)\leq 3\eps$.
We conclude that
\[
\Pr\Big(d(\hat\theta_\ast,\theta_0)> 3\eps\Big)\leq \Pr(\m E_1).
\]
The rest of the proof repeats the argument of part \textbf{a} since
\[
\Pr(\m E_1^c)=\Pr\l(\sum_{j=1}^m I\l\{d(\hat\theta_j,\theta_0)>\eps\r\}\geq\frac{m}{2} \r),
\]
where $\m E_1^c$ is the complement of $\m E_1$.

\subsection{Proof of Theorem 3.1}
\label{sec:proof1}

By the definition  of Wasserstein distance $d_{W_1}$,
\begin{align}
\label{eq:a1}
d_{W_1,\rho}(\delta_0,\Pi_l(\cdot|\m X_l))&=\int_\Theta \rho(\theta,\theta_0) d\Pi_l(\theta| X_1,\ldots,X_l).
\end{align}
(recall that $\rho$ is the Hellinger distance).
Let $R$ be a large enough constant to be determined later.
Note that the Hellinger distance is uniformly bounded by 1.
Using \eqref{eq:a1}, it is easy to see that
\begin{align}
\label{eq:a3}
d_{W_{1,\rho}}(\delta_0,\Pi_l(\cdot|\m X_l))\leq R\eps_l+
\int\limits_{\rho(\theta,\theta_0)\geq R\eps_l}d\Pi_l(\cdot|\m X_l).
\end{align}
To this end, it remains to estimate the second term in the sum above.
We will follow the proof of Theorem 2.1 in \cite{Ghosal2000Convergence-rat00}.
Bayes formula implies that
\[
\Pi_l(\theta: \rho(\theta,\theta_0)\geq R\eps_l|\m X_l)=
\int\limits_{\rho(\theta,\theta_0)\geq R \eps_l}\frac{\prod_{i=1}^l \frac{p_\theta}{p_0}(X_i)d\Pi(\theta)}
{\int\limits_{\Theta}\prod_{i=1}^l \frac{p_\theta}{p_0}(X_i)d\Pi(\theta)}.
\]
Let
\[
A_l=\left\{ \theta: -P_{0}\left(\log \dfrac{p_{\theta}}{p_0} \right)\leq \eps_l^2, P_{0}\left(\log \dfrac{p_{\theta}}{p_0} \right)^2\leq \eps_l^2 \right\}.
\]
For any $C_1>0$, Lemma 8.1 \cite{Ghosal2000Convergence-rat00} yields
\[
\Pr\left\{\int\limits_{\Theta}\prod_{i=1}^l \frac{p_\theta}{p_0}(X_i)dQ(\theta)\leq \exp\big(-(1+C_1)l\eps_l^2\big)\right\}\leq \frac{1}{C_1^2l\eps_l^2} .
\]
for every probability measure $Q$ on the set $A_l$.
Moreover, by the assumption on the prior $\Pi$,
\[
\Pi(A_l)\geq \exp\left( -Cl\eps_l^2\right).
\]
Consequently, with probability at least $1-\frac{1}{C_1^2l\eps_l^2}$,
 \begin{align*}
\int\limits_{\Theta}\prod_{i=1}^l \frac{p_\theta}{p_0}(X_i)d\Pi(\theta)\geq \exp\Big(-(1+C_1)l\eps_l^2\Big)\Pi(A_l)\geq \exp(-(1+C_1+C)l\eps_l^2).
 \end{align*}
Define the event
$B_l=\left\{ \int\limits_{\Theta}\prod\limits_{i=1}^l \frac{p_\theta}{p_0}(X_i)d\Pi(\theta)\leq  \exp\Big(-(1+C_1+C)l\eps_l^2\Big) \right\}$.

Let $\Theta_l$ be the set satisfying conditions of Theorem 3.1.
Then by Theorem 7.1 in \cite{Ghosal2000Convergence-rat00}, there exist test functions
$\phi_l:=\phi_l(X_1,\ldots,X_l)$ and a universal constant $K$  such that
\begin{align}
\label{eq:test}
&
\mb E_{P_0}\phi_l\leq 2e^{-Kl\eps_l^2},
\\
\nonumber
\sup\limits_{\theta\in \Theta_l, h(P_\theta,P_0)\geq R\eps_l}&\mb E_{P_\theta}(1-\phi_l)\leq e^{-KR^2\cdot l\eps_l^2},
\end{align}
where $KR^2-1>K$.

Note that
\begin{align*}
\Pi_l(\theta: \rho(\theta,\theta_0)\geq R \eps_l|X_1,\ldots,X_l)= \Pi_l(\theta: \rho(\theta,\theta_0)\geq R \eps_l|X_1,\ldots,X_l)(\phi_l+1-\phi_l).
\end{align*}
For the first term,
\begin{align}
\label{eq-term1}
\mb E_{P_{0}}\Big[\Pi_l(\theta: \rho(\theta,\theta_0)\geq R\eps_l|X_1,\ldots,X_l)\cdot\phi_l\Big]\leq \mb E_{P_0}\phi_l\leq 2e^{-Kl\eps_l^2}.
\end{align}
Next, by the definition of $B_l$, we have
\begin{align}
\label{eq-term2}
\nonumber &\Pi_l(\theta: \rho(\theta,\theta_0)\geq R\eps_l|\m X_l)(1-\phi_l)=\\
\nonumber
&\dfrac{\int\limits_{\rho(\theta,\theta_0)\geq R \eps_l}\prod_{i=1}^l \frac{p_\theta}{p_0}(X_i)d\Pi(\theta)(1-\phi_l)}{\int\limits_{\Theta}\prod_{i=1}^l \frac{p_\theta}{p_0}(X_i)d\Pi(\theta)}\left(I\{B_l\}+I\{B_l^c\}\right)\\
&\leq I\{B_l\}+e^{(1+C_1+C)l\eps_l^2} \int\limits_{\rho(\theta,\theta_0)\geq  R\eps_l}\prod_{i=1}^l \frac{p_\theta}{p_0}(X_i)d\Pi(\theta)(1-\phi_l).
\end{align}

To estimate the second term of last equation, note that
\begin{align}
\label{eq-term3}
&\nonumber
\mb E_{P_{_0}} \int\limits_{\rho(\theta,\theta_0)\geq R\eps_l}\prod_{i=1}^l \frac{p_\theta}{p_0}(X_i)d\Pi(\theta)(1-\phi_l)\leq \\
&\nonumber
\mb E_{P_{_0}}\Bigg(\int\limits_{\theta\in \Theta\backslash\Theta_l}\prod_{i=1}^l \frac{p_\theta}{p_0}(X_i)d\Pi(\theta)(1-\phi_l)+
\int\limits_{\{\Theta_l\cap \rho(\theta,\theta_0)\geq R\eps_l\}}\prod_{i=1}^l \frac{p_\theta}{p_0}(X_i)d\Pi(\theta)(1-\phi_l)\Bigg)\leq\\
&
\Pi(\Theta\backslash\Theta_l)+\int \limits_{\{\Theta_l\cap \rho(\theta,\theta_0\}) \geq R\eps_l}\mb E_{P_0} \left(\prod_{i=1}^l \frac{p_\theta}{p_0}(X_i)d\Pi(\theta)(1-\phi_l)\right)\leq \\
& \nonumber
e^{-l\eps_l^2(C+4)}+e^{-KR^2 \cdot l\eps_l^2 }\leq
2e^{-l\eps_l^2(C+4)}
\end{align}
for $R\geq \sqrt{(C+4)/K}$.
Set $C_1=1$ and note that $I\{B_l\}=1$ with probability $P(B_l)\leq 1/l\eps_l^2$.
It follows from \eqref{eq-term1}, \eqref{eq-term2} and \eqref{eq-term3}  and Chebyshev's inequality that for any  $t>0$
\begin{align*}
\Pr\Big(\Pi_l(\theta: \rho(\theta,\theta_0)\geq R\eps_l|\m X_l)\geq t\Big)&
\leq \Pr(B_l)+\dfrac{2e^{-Kl\eps_l^2}}{t}+ \dfrac{2\exp\left( -2l \eps_l^2 \right)}{t}\\
&\leq \frac{1}{l\eps_l^2}+\dfrac{2e^{-Kl\eps_l^2}}{t}+ \dfrac{2\exp\left( -2l \eps_l^2 \right)}{t}.
\end{align*}

Finally, for a constant $\tilde K=\min(K/2,1)$ and $t=e^{-\tilde K l\eps_l^2}$, we obtain
\begin{align*}
\Pr\Big(\Pi_l(\theta: \rho(\theta,\theta_0)\geq R\eps_l|\m X_l)\geq t\Big)&\leq \frac{1}{l\eps_l^2}+2e^{-Kl\eps_l^2/2}+ 2\exp\left( -l \eps_l^2 \right)\\
&\leq \frac{1}{l\eps_l^2}+4e^{-\tilde K l\eps_l^2},
\end{align*}
which yields the result.

\subsection{Proof of Theorem 3.8}
\label{proof:3.8}

From equation (3.7) in the paper and proceeding as in proof of Theorem 3.2, we get that
\[
\l\|\delta_0-\Pi_l(\cdot|\m X_l)\r\|_{\m F_k}\leq R\eps_l^{1/\gamma}+
\int\limits_{\rho_k(\theta,\theta_0)\geq R\eps_l^{1/\gamma}}\rho_k(\theta,\theta_0)d\Pi_l(\cdot|\m X_l).
\]
		
By H\"{o}lder's inequality, 
\begin{align*}
\int\limits_{\rho_k(\theta,\theta_0)\geq R\eps_l^{1/\gamma}}&\rho_k(\theta,\theta_0)d\Pi_l(\cdot|\m X_l)\leq \\
&
\l[\int\limits_{\Theta}\rho^w_k(\theta,\theta_0)d\Pi_l(\cdot|\m X_l)\r]^{1/w}
\l[\int\limits_{\rho_k(\theta,\theta_0)\geq R\eps_l^{1/\gamma}}d\Pi_l(\cdot|\m X_l)\r]^{1/q}
\end{align*}
with $w>1$ and $q=\frac{w}{w-1}$.
		
Define the event
\[
B_l=\left\{ \int\limits_{\Theta}\prod\limits_{i=1}^l \frac{p_\theta}{p_0}(X_i)d\Pi(\theta)\geq  \exp\Big(-(1+C_1+C)l\eps_l^2\Big) \right\}.
\]
Following in the proof of Theorem 3.1, we note that 
$\Pr(B_l)\geq1-\frac{1}{C_1^2l\eps_l^2}$ (where constants $C,C_1$ are the same as in the proof of Theorem 3.1). 
Also, note that 
\begin{align*}
\mb E_{P_0}\l[\int_\Theta \rho^w_k(\theta,\theta_0)\prod_{i=1}^l \frac{p_\theta}{p_0}(X_i)d\Pi(\theta)\r] = 
\int_\Theta \rho^w_k(\theta,\theta_0) d\Pi(\theta)
\end{align*}

Hence, with probability $\geq 1-e^{-\tilde K l\eps_l^2}$, 
\[
\int_\Theta \rho^w_k(\theta,\theta_0)\prod_{i=1}^l \frac{p_\theta}{p_0}(X_i)d\Pi(\theta)\leq 
e^{\tilde Kl\eps_l^2}\int_\Theta \rho^w_k(\theta,\theta_0) d\Pi(\theta), 
\]
where $\tilde K=\min(K/2,1)$ is the same universal constant as in the proof of Theorem 3.1. 
Writing 
\begin{align*}
\l[\int\limits_{\Theta}\rho^w_k(\theta,\theta_0)d\Pi_l(\cdot|\m X_l)\r]^{1/w}=
\l[\frac{\int_\Theta \rho^w_k(\theta,\theta_0)\prod_{i=1}^l \frac{p_\theta}{p_0}(X_i)d\Pi(\theta)}
	{\int_\Theta \prod_{i=1}^l \frac{p_\theta}{p_0}(X_i)d\Pi(\theta)}\r]^{1/w},
\end{align*}
we deduce that with probability $\geq 1-e^{-\tilde K l\eps_l^2}-\frac{1}{l\eps_l^2}$ (where we set $C_1:=1$),
\begin{align*}
\l[\int\limits_{\Theta}\rho^w_k(\theta,\theta_0)d\Pi_l(\cdot|\m X_l)\r]^{1/w}\leq 
e^{\frac{2+C+\tilde K}{w}l\eps_l^2}\l[\int_\Theta \rho^w_k(\theta,\theta_0) d\Pi(\theta)\r]^{1/w}.
\end{align*}
 
By Theorem 7.1 in \cite{Ghosal2000Convergence-rat00}, there exist test functions
$\phi_l:=\phi_l(X_1,\ldots,X_l)$ and a universal constant $K$  such that
\begin{align*}
&
\mb E_{P_0}\phi_l\leq 2e^{-Kl\eps_l^2},
\\
\nonumber
\sup\limits_{\theta\in \Theta_l, d(\theta,\theta_0)\geq \tilde R\eps_l}&\mb E_{P_\theta}(1-\phi_l)\leq e^{-K\tilde R^2\cdot l\eps_l^2},
\end{align*}
where $K\tilde R^2-1>K$ and $d(\cdot,\cdot)$ is the Hellinger or Euclidean distance. 
It immediately follows from Assumption 3.4 that 
\[
\l\{ \theta: \rho(\theta,\theta_0)\geq \tilde C_0(\theta_0) R^\gamma\eps_l  \r\}\supseteq \l\{ \theta: \rho_k(\theta,\theta_0)\geq R \eps_l^{1/\gamma}  \r\},
\]
hence the test functions $\phi_l$ (for $\tilde R:=\tilde C_0(\theta_0)R^\gamma)$ satisfy  
\begin{align}
\label{eq:test2}
&
\mb E_{P_0}\phi_l\leq 2e^{-Kl\eps_l^2},
\\
\nonumber
\sup\limits_{\theta\in \Theta_l, \rho_k(\theta,\theta_0)\geq R\eps_l^{1/\gamma}}&\mb E_{P_\theta}(1-\phi_l)\leq e^{-KR^2\cdot l\eps_l^2}.
\end{align}
Repeating the steps of the proof of Theorem 3.1, we see that if $R$ is chosen large enough, then
\[
\int\limits_{\rho_k(\theta,\theta_0)\geq R\eps_l^{1/\gamma}}d\Pi_l(\cdot|\m X_l)\leq e^{-\tilde K l\eps_l^2}
\]
with probability $\geq 1- \frac{1}{l\eps_l^2}-4e^{-\tilde K l\eps_l^2}.$ 
Combining the bounds, we obtain that, for $w:=\frac{4}{3}+\frac{4+2C}{3\tilde K}$, with probability 
$\geq 1-\frac{1}{l\eps_l^2}-5e^{-\tilde K l\eps_l^2}$,
\[
\int\limits_{\rho(\theta,\theta_0)\geq R\eps_l^{1/\gamma}}\rho_k(\theta,\theta_0)d\Pi_l(\cdot|\m X_l)\leq 
e^{-\tilde K l\eps_l^2/2}\l[\int_\Theta \rho^w_k(\theta,\theta_0) d\Pi(\theta)\r]^{1/w},
\]
hence the result follows.

\subsection{Proof of Theorem 3.11}
\label{proof:dist2}

The proof strategy is similar to Theorem 3.1.
Note that
\begin{align}
\label{eq:zy1}
&
d_{W_{1,\rho}}(\delta_0,\Pi_{n,m}(\cdot|\m X_l))\leq R\eps_l+
\int\limits_{h(P_\theta,P_0)\geq R\eps_l}d\Pi_l(\cdot|\m X_l),
\end{align}
where $h(\cdot,\cdot)$ is the Hellinger distance. 

Let $\m E_l:=\{\theta: h(P_\theta,P_0)\geq R\eps_l\}$.
By the definition of $\Pi_{n,m}$, we have
\begin{align}
\label{eq:b1}
\Pi_{n,m}(\m E_l|\m X_l)= \frac{\int\limits_{\m E_l}\left(\prod_{j=1}^l \frac{p_\theta}{p_0}(X_j)\right)^md\Pi(\theta)}
{\int\limits_{\Theta}\left(\prod_{j=1}^l \frac{p_\theta}{p_0}(X_j)\right)^m d\Pi(\theta)}.
\end{align}
To bound the denominator from below, we proceed as before. Let
\[
\Theta_l=\left\{ \theta: -P_0\left(\log \dfrac{p_{\theta}}{p_0} \right)\leq \eps_l^2, P_0\left(\log \dfrac{p_{\theta}}{p_0} \right)^2\leq \eps_l^2 \right\}.
\]
Let $B_l$ be the event defined by
\[
B_l:=\left\{\int\limits_{\Theta_l}\left(\prod_{i=1}^l \frac{p_\theta}{p_0}(X_i)\right)^m dQ(\theta)\leq \exp(-2m  l\eps_l^2)\right\},
\]
where $Q$ is a probability measure supported on $\Theta_l$.
Lemma 8.1 in \cite{Ghosal2000Convergence-rat00} yields that $\Pr(B_l)\leq \frac{1}{ l\eps_l^2}$ for any $Q$, in particular, for the conditional distribution
$\Pi(\cdot|\Theta_l)$.
We conclude that
\[
\int\limits_{\Theta}\left(\prod_{j=1}^l \frac{p_\theta}{p_0}(X_j)\right)^m d\Pi(\theta)\geq \Pi(\Theta_l)\exp(-2ml\eps_l^2)\geq
\exp(-(2m+C)l\eps_l^2).
\]

To estimate the numerator in (\ref{eq:b1}), note that if Theorem 3.10 holds for $\gamma=\eps_l$, then it also holds for $\gamma=L\eps_l$ for any $L\geq 1$.
This observation implies that
\[
\sup_{\theta\in \m E_l}\left(\prod_{j=1}^l \frac{p_\theta}{p_0}(X_j)\right)^{m}\leq e^{-c_1R^2 ml \eps_l^2}
\]
with probability $\geq 1-4e^{-c_2R^2 l\eps_l^2}$, hence
\begin{align*}
\int\limits_{\m E_l}\left(\prod_{j=1}^l \frac{p_\theta}{p_0}(X_j)\right)^{m} d\Pi(\theta)\leq e^{-c_1R^2 m l\eps_l^2}
\end{align*}
with the same probability.
Choose $R=R(C)$ large enough so that $c_1 m R^2\geq 3m+C$.
Putting the bounds for the numerator and denominator of (\ref{eq:b1}) together, we get that with probability
$\geq 1-\frac{1}{ l\eps_l^2}-4e^{-c_2R^2 l \eps_l^2}$,
\[
\Pi_{n,m}(\m E_l| \m X_l)\leq e^{-ml\eps_l^2}.
\]
The result now follows from (\ref{eq:zy1}).

\subsection{Proof of Proposition 3.13}
\label{proof:3.13}

The proof follows a standard pattern: on the first step, we show that it is enough to consider the posterior obtained from the prior restricted to a large compact set, and then proving the theorem for the prior with compact support. 
The second part mimics the classical argument exactly (e.g., see \cite{van2000asymptotic}). 

To show that one can restrict the prior to the compact set, it is enough to establish that for $R$ large enough and 
$\m E_l:=\{\theta: \| \theta - \theta_0 \|\geq \frac{R}{\sqrt l}\}$,
\begin{align}
\label{eq:b11}
\Pi_{n,m}(\m E_l|\m X_l)= \frac{\int\limits_{\m E_l}\left(\prod_{j=1}^l \frac{p_\theta}{p_0}(X_j)\right)^md\Pi(\theta)}
{\int\limits_{\Theta}\left(\prod_{j=1}^l \frac{p_\theta}{p_0}(X_j)\right)^m d\Pi(\theta)}
\end{align}
can be made arbitrarily small. 
This follows from the inclusion 
\[
\m E_l\subseteq \l\{ \theta: h(P_\theta,P_{\theta_0})\geq \tilde C\frac{R}{\sqrt l} \r\}
\] 
(due to the assumed inequality between Hellinger and Euclidean distances) and the bounds for the numerator and the denominator of (\ref{eq:b11}) established in the proof of Theorem 3.11.

\section{Numerical simulation: additional examples and details.} 


\subsection{Probabilistic parafac model}
\label{app:parafac}

The generative model of p-parafac has two levels. First, prior probabilities of latent classes and parameters are sampled. Discrete random measure $\nu(\cdot) = \sum_{h=1}^{\infty} \nu_{h} \delta_{h}(\cdot)$ is generated using the stick-breaking construction of DP
\begin{align*}
  V_h \mid \alpha \sim \mathrm{Beta}(1, \alpha) \text{ and } \nu_h = V_h \prod_{l<h} (1 - V_l) \text{ for } h = 1, \ldots, \infty,
\end{align*}  
where $\nu_h$ is the prior probability of responders responses belonging to the latent class $h$. The prior probability of a response to $k$th question depending on the latent class $h$ 
\begin{align*}
  \Psi_{h}^{k}   = (\psi_{h;\mathrm{yes}}^{k}, \psi_{h;\mathrm{no}}^{k}) \mid \alpha_1, \alpha_2 \sim \mathrm{Dirichlet}(\alpha_1, \alpha_2), \quad k = 1, \ldots, 9 \text{ and } h = 1, \ldots, \infty.
\end{align*}
Second, latent variables and parameters are generated, which are specific to responders in the training data. The latent class of $n$th responder
\begin{align*}
  z_n \mid \nu(\cdot) \sim \sum_{h=1}^{\infty} \nu_{h} \delta_{h}(\cdot).
\end{align*}
Finally, the response of $n$th responder for question $k$ 
\begin{align*}
  y_{nk} \mid \Psib_{1}^{k}, \ldots, \Psib_{\infty}^{k} , z_n  \sim \mathrm{Multinomial} \big(\{\mathrm{yes}, \mathrm{no} \}, \Psib_{z_n}^{k}\big). 
\end{align*}
This generative model in turn implies that 
\begin{align*}
  &P(y_{n1} = c_1, \ldots, y_{nk}, \ldots, y_{n9} = c_9)=\pib_{c_1, \ldots, c_9} = \sum_{h=1}^{\infty} \nu_h \prod_{k=1}^9 \psi_{h;c_k}^{k}, 
\end{align*}
 where $c_k \in \{\mathrm{yes}, \mathrm{no}\}$, $k \in \{1, \ldots, 9\}$, and $\nu_h$s and $\Psi^k_h$s respectively have the stick-breaking and Dirichlet prior distributions.  The hyperparameters of this model are $\alpha$, $\alpha_1$, and $\alpha_2$. We specify Gamma prior on $\alpha$ with scale and shape parameter fixed at 1 and assume that $\alpha_1=1$ and $\alpha_2=1$. Due to the finite number of available responses, the number of latent classes is upper-bounded by a finite number \citep{IshLan01}. This formulation leads to a simple Gibbs sampler for obtaining posterior samples of $\pib_{c_1, \ldots, c_9} $ (see (5) in \cite{DunXin09} for analytic forms of the conditional distributions).  Samples of $\pib_{c_i, c_j} $ are obtained after marginalizing responses for remaining 7 questions from $\pib_{c_1, \ldots, c_9} $. Appendix D in \citet{Srietal15b} shows a general scheme for sampling from subset posteriors that are obtained after modifying the original p-parafac sampler using stochastic approximation.

 All sampling algorithms were implemented in Matlab. The samplers ran for
 10,000 iterations and every fifth sample was collected after a burn-in of 5000. All experiments were performed on Oracle Grid Engine cluster with 2.6GHz 16 core compute nodes. Memory resources were capped at 16GB and 64GB for sampling from subset and overall posteriors, respectively.

\begin{figure}[t]
  \centering
  \subfloat[Support capital punishment (cap) and legalization of marijuana (grass)]{
    \includegraphics[width=\textwidth]{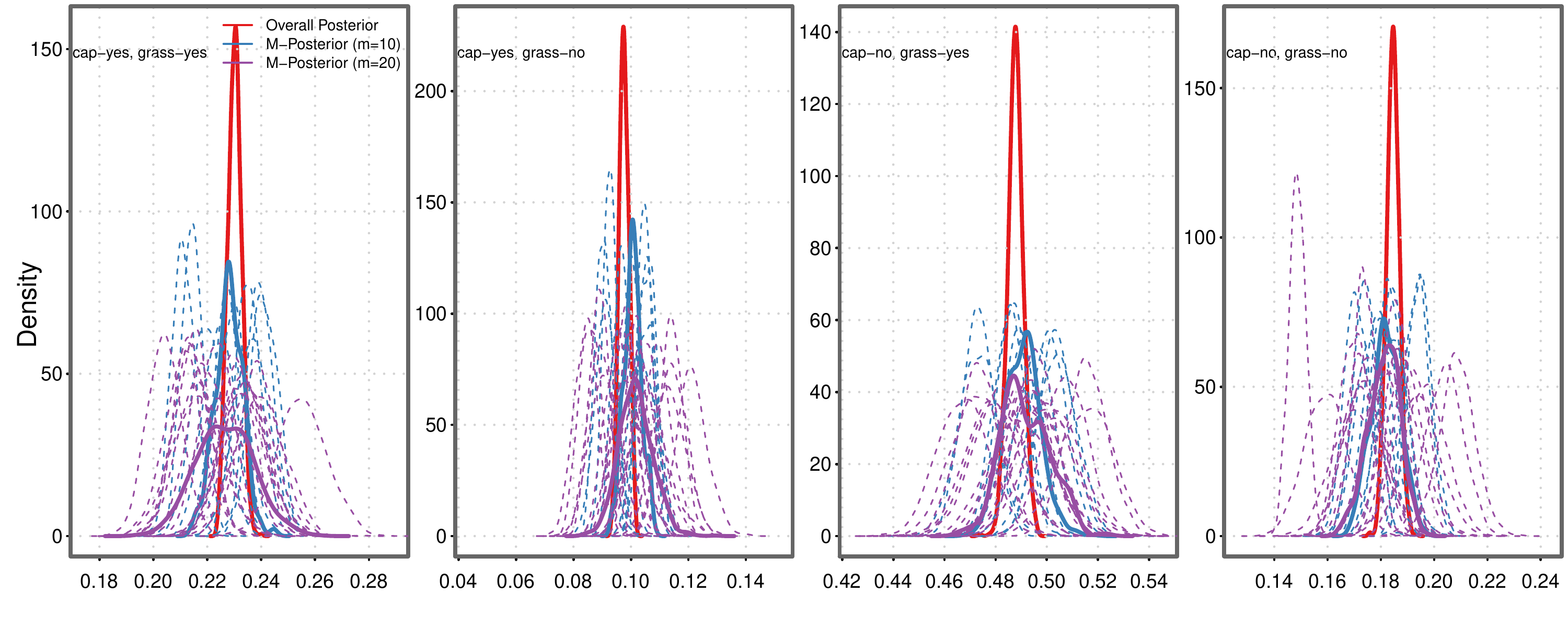}
    \label{m-cilen-sens1}}\\
  \subfloat[Expect US to be in world war in 10 years (uswar) and support abortion (abort)]{
    \includegraphics[width=\textwidth]{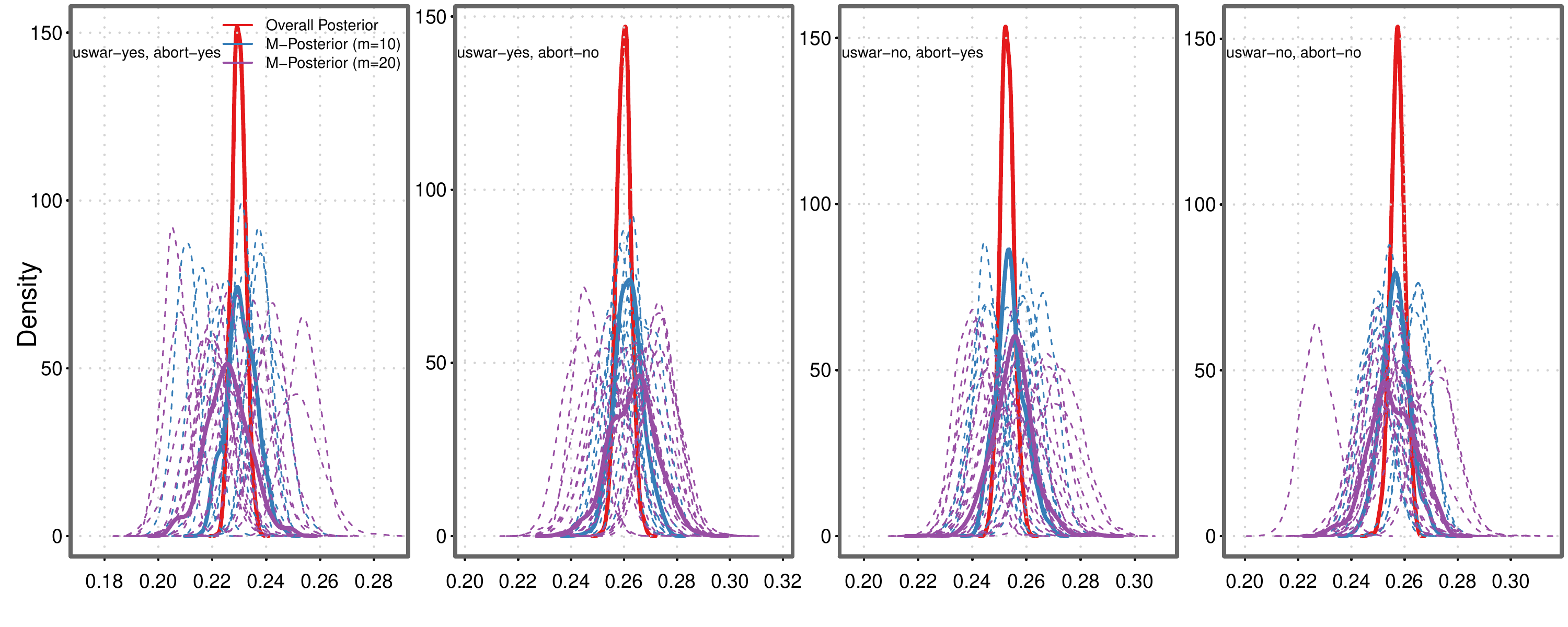}
    \label{gss-time1}}
  \caption{Kernel density estimators of subset posteriors (dashed), M-Posterior with $m=10$ (blue) and $m=20$ (violet), and the overall posterior (red) for the four responses. In particular, the model ``recognizes'' that ``expect US to be in world war in 10 years'' (uswar) and ``support abortion'' (abort) should be independent.}
\end{figure}

\subsection{Dirichlet process mixture model sampling for the stochastic approximation}
\label{section:dirichlet}

As we have mentioned, using the stochastic approximation in place of the subset posterior does not lead to increased sample complexity in many cases. 
Many Bayesian models involve hierarchical exponential family specifications, in which case conditional distributions in Gibbs sampling or acceptance probabilities in Metropolis-Hastings algorithms can be  trivially modified to account for the weighted likelihood.  
Here, we present one particular example of the Dirichlet process mixture model. 
We augment the original sampling model with latent variables and raise the complete data likelihood to an appropriate power. 
We have observed excellent performance with this approach in other contexts, including the
matrix completion example.

Let the data $\Xcal_n = \{X_1, \ldots, X_n\}$ be i.i.d. samples from a probability measure $P_{0}$ on $\RR^D$ that  has density $p_{0}$ with respect to the Lebesgue measure. A popular model for inference on $P_0$ using Dirichlet process mixtures assumes that 
\begin{align}
&\nonumber
  X_i \mid P \sim P, \quad P(dx) = \int k_{\theta} (x) G(d \theta), \\ 
 &
 G \sim DP(\alpha P_{\theta_0}), \text{ independently for }i=1, \ldots, n, 
 \label{eg0}
\end{align}
where $k_{\theta} (x) $ is a kernel such that $\int k_{\theta} (x) dx =1$ and $\theta \subseteq \RR^p$. We will focus on a special case of this setup with $D=1$, $p=1$, $k_{\theta} (x) = \tfrac{1}{\sqrt{2 \pi \sigma^2}} \exp\left(-\tfrac{(x - \theta)^2} {2 \sigma^2} \right)$. For inference on $P$ using MCMC, each $X_i$ is augmented with latent variable $Z_i$ and the hierarchical model in \eqref{eg0} is written using stick breaking representation \cite{sethuraman1994constructive} as
\begin{align}
  P(d x_i) \mid \sigma, Z_i &= \frac{1} {\sqrt{2 \pi \sigma^2}} \exp\left(-\frac{(x_i - \theta_{z_i})^2} {2 \sigma^2} \right),\quad Z_i \sim \sum_{h=1}^{\infty} \nu_h \delta_h, \quad \nu_h = V_h \prod_{q < h} (1 - V_q), \nonumber \\  
  V_h &\sim \text{Beta}(1, \alpha), \quad \theta_h \sim \text{Normal} (0, \sigma_{\theta}^2), \quad \text{ independently for } h = 1, \ldots, \infty   \label{eg1}
\end{align}
and $i=1, \ldots, n$; $\sigma^2$ and $\alpha$ are respectively assigned Inverse-Gamma($a_{0}, b_{0}$) and Gamma($a_{\alpha}$, $b_{\alpha}$) priors. Assume that data are partitioned into $m$ subsets of size $l$ such that data on subset $j$ are $\Xcal_j = \{X_{j1}, \ldots, X_{jl}\}$. The complete data likelihood for subset $j$ after stochastic approximation is
\begin{align*}
  l_j^m(\theta_1, \ldots, \theta_{K^*}) = \left( \prod_{i=1}^l \prod_{h=1}^{K^*}  \left(\frac{1} {\sqrt{2 \pi \sigma^2}} \exp \left( -\frac{(x_{ji} - \theta_h)^2 } {2 \sigma^2} \right)\right)^{1(Z_{ji} = h)}\right)^{m},
\end{align*}
where $1(\cdot)$ is an indicator function, $K^*$ is the maximum number of atoms in the stick breaking representation for $G$, and $l_j^m( \theta_1, \ldots, \theta_{K^*} )$ also depends on latent variables and $\sigma$. Full conditionals of latent variables and unknown parameters are tractable in terms of standard distributions. The Gibbs sampler iterates between the following five steps:
\begin{enumerate}
\item Sample $\theta_h \mid \text{rest}$ from Normal($\mu_h$, $\sigma_h^2$) for $h=1, \ldots, K^*$, where
  \begin{align*}
    \sigma^2_h = \left( m \sigma^{-2} \sum_{i=1}^l 1(z_{ji} = h) + \sigma_{\theta}^{-2} \right)^{-1}, \quad \mu_h = \frac{m \sigma^2_h }{\sigma^{2}  } \sum_{i=1}^l 1(z_{ji} = h) x_{ji}.
  \end{align*}
\item Sample $Z_{ji} \mid \text{rest}$ for $i=1, \ldots, l$ from the categorical distribution 
  \begin{align*}
    &Z_{ji}  \mid \text{rest} \sim \sum_{h=1}^{K^*} p_{jh} \delta_{h}, \quad p_{jh} = \frac{w_{jh}}{\sum_{h=1}^{K^*} w_{jh}},\quad 
    w_{jh} = \nu_h \exp\left(-\frac{m(x_{ji} - \theta_{h})^2} {2 \sigma^2} \right). 
  \end{align*}
\item Sample $\sigma^2 \mid \text{rest}$ from Inverse-Gamma($a_{\sigma}, b_{\sigma}$), where
  \begin{align*}
    a_{\sigma} = \frac{ml}{2} + a_0, \quad b_{\sigma} = \frac{m} {2} \sum_{i=1}^{l} \sum_{h=1}^{K^*} 1 (z_{ji} = h) (x_{ji} - \theta_h)^2  + b_0.
  \end{align*}
\item Sample $V_h \mid \text{rest}$ from Beta($1 + \sum_{i=1}^l 1(z_{ji} = h)$, $\alpha + \sum_{i=1}^l 1(z_{ji} > h)$) for $h = 1, \ldots, K^*$. 
\item Sample $\alpha \mid \text{rest}$ from Gamma($a_{\alpha} + K^*$, $b_{\alpha} - \sum_{h=1}^{K^*} \log(1 - V_h)$).
\end{enumerate}
We fix $\sigma_{\theta} = 100, a_{\alpha} = b_{\alpha} =1, $ and $a_0 = b_0=0.01$ following standard conventions.

\subsection{Selection of the optimal number of subsets $m$}
\label{section:selection}

The following heuristic approach picks the median among the candidate $M$-posteriors. 
Namely, start by evaluating the M-Posterior for each $m$ in the range of candidate values $[m_1,m_2]$:
\[
\begin{array}{c}
\hat\Pi_{n,m_1}^g:=\med_g(\Pi_n^{(1)},\ldots,\Pi_n^{(m_1)}),\\
\hat\Pi_{n,m_1+1}^g:=\med_g(\Pi_n^{(1)},\ldots,\Pi_n^{(m_1+1)}),\\
\vdots\\
\hat\Pi_{n,m_2}^g:=\med_g(\Pi_n^{(1)},\ldots,\Pi_n^{(m_2)})
\end{array}
\]
and choose $m_\ast\in[m_1,m_2]$ such that 
\begin{align}
\label{eq:automatic_m}
&
\hat\Pi^g_{n,m_\ast}=\med_0\left(\hat\Pi_{n,m_1}^g,\hat\Pi_{n,m_1+1}^g,\ldots,\hat\Pi_{n,m_2}^g\right),
\end{align}
where $\med_0$ is the \emph{metric median} defined in (2.3) in the section 2 of the paper.

\end{document}